\tikzstyle{empty}=[circle,draw=black!80,thick]
\tikzstyle{emptyn}=[circle,draw=black!80,fill=white,scale=0.5] 
\tikzstyle{nero}=[circle,draw=black!80,fill=black!80,thick] 
\newcommand{\fS}{\mathfrak{S}}
\newcommand{\Irr}{{\mathrm{Irr}}}
\newcommand{\Lin}{\mathrm{Lin}}
\newcommand{\down}{\big\downarrow}
\newcommand{\up}{\big\uparrow}
\newcommand{\cd}{\mathrm{cd}}
\newcommand{\lX}{\mathcal{X}}
\newcommand{\bB}{\bar{\mathcal{B}}}
\newtheorem{theorem}{Theorem}[section]
\newtheorem{lemma}[theorem]{Lemma}
\newtheorem{proposition}[theorem]{Proposition}
\theoremstyle{definition}
\newtheorem{example}[theorem]{Example}
\newtheorem{definition}[theorem]{Definition}
\newtheorem{observation}[theorem]{Remark}
\newcommand{\6}{^}
\begin{document}

\title[]{Sylow Branching Coefficients and Hook partitions}


\author{Eugenio Giannelli}
\address[E. Giannelli]{Dipartimento di Matematica e Informatica U.~Dini, Viale Morgagni 67/a, Firenze, Italy}

\author{Giada Volpato}
\address[G. Volpato]{Dipartimento di Matematica e Informatica U.~Dini, Viale Morgagni 67/a, Firenze, Italy}
\email{eugenio.giannelli@unifi.it, giada.volpato@unifi.it}


\begin{abstract}
We give a description of the irreducible constituents of the restriction to Sylow $2$-subgroups of irreducible characters of symmetric groups labelled by hook partitions. 
\end{abstract}

\keywords{Character Theory, Symmetric Groups, Sylow Branching Coefficients}


\maketitle

\section{Introduction}\label{sec:intro}

Let $n\in\mathbb{N}$ and let $\fS_n$ be the finite symmetric group of degree $n$. Let $p$ be a prime number and let $P_n$ be a Sylow $p$-subgroup of $\fS_n$. Let $\chi\in\mathrm{Irr}(\fS_n)$ and $\phi\in\mathrm{Irr}(P_n)$ be irreducible characters of $\fS_n$ and $P_n$, respectively. The corresponding \textit{Sylow branching coefficient} $Z_{\phi}\6{\chi}$ is defined as the multiplicity of $\phi$ as an irreducible constituent of $\chi\big\downarrow_{P_n}$, the restriction of $\chi$ to $P_n$. Sylow branching coefficients have been intensely studied for symmetric groups and odd primes \cite{INOT, GL1, GL2}. Moreover, new information on these integers was recently used in \cite{GLLV} to prove a conjecture proposed by Malle and Navarro in \cite{MN12}. 
There is now an evident fracture between the knowledge accumulated on Sylow branching coefficients at odd primes and the lack of information on this topic when the prime $p$ is equal to $2$. For instance
the irreducible constituents of the Sylow permutation character $1_{P_n}\big\uparrow\6{\fS_n}$
are completely described for odd primes \cite[Theorem A]{GL1} but, despite some recent advances \cite{LO}, are far from understood when $p=2$. 
 
The aim of this note is to advance in the study of Sylow branching coefficients at the prime $2$. In particular, here we focus on irreducible characters labelled by \textit{hook partitions}. From now on we denote this subset of $\mathrm{Irr}(\fS_n)$ by $\mathrm{Irr}_{\mathcal{H}}(\fS_n)$.
In order to present our main results, we first recall that $P_{2^k}$ is isomorphic to the $k$-fold wreath product of cyclic groups of order $2$. It follows that linear characters of $P_{2^k}$ are naturally labelled by $\{0,1\}$-sequences of length $k$ (see the end of Section \ref{sec: wr}).  
Motivated by the study of the McKay Conjecture, in \cite{GJLMS} it was shown that for every $\chi\in\mathrm{Irr}_{\mathcal{H}}(\fS_{2^k})$,
the restriction of $\chi$ to $P_{2^k}$ admits a unique linear constituent. In \textbf{Theorem \ref{lem: unique_linear}} below we identify this unique constituent by describing the associated $\{0,1\}$-sequence. This result is immediately used in Section \ref{S: 4} to compute a large family of Sylow branching coefficients. In particular, in \textbf{Theorem \ref{thm: 4.3}} we calculate $Z^\chi_\phi$ for all $\chi\in\mathrm{Irr}_{\mathcal{H}}(\fS_n)$ and all $\phi\in\mathrm{Lin}(P_n)$. This is a wide generalization of \cite[Theorem 1.1]{GJLMS} and, at the moment, one of the few examples of explicit numerical determination of Sylow branching coefficients for arbitrary large symmetric groups. 
In the second part of the article we move from linear to arbitrary constituents. In particular, given $n, k\in\mathbb{N}$ we investigate which $\chi\in\mathrm{Irr}_{\mathcal{H}}(\fS_n)$ are such that $\chi\big\downarrow_{P_n}$ admits an irreducible constituent of degree $2^k$. In \textbf{Theorem \ref{thm: generic_case}} we address this question. An interesting consequence of our observations in Section \ref{S: 5} is that whenever $\chi\big\downarrow_{P_n}$ admits a constituent of degree $2\6k$, then it necessarily admits constituents of degree $2^\ell$, for every $0\leq \ell\leq k$. 

\section{Notation and background}

Given integers $n\leq m$, we denote by $[n,m]$ the set $\{n, n+1,\ldots, m\}$. If $n<m$ then $[m,n]$ is regarded as the empty set. 
We let $\mathcal{C}(n)$ be the set of all compositions of $n$, i.e. the set consisting of all the finite sequences $(a_1,a_2,\ldots, a_z)$ such that $a_i$ is a non-negative integer for all $i\in [1,z]$ and such that $a_1+\cdots+a_z=n$.
Given $\lambda=(\lambda_1,\dots , \lambda_z)\in \mathcal{C}(n)$, we sometimes denote by $l(\lambda)=z$ the number of parts of $\lambda$.
As usual, given a finite group $G$, we denote by $\mathrm{Irr}(G)$ the set of irreducible complex characters of $G$, and by $\mathrm{Lin}(G)$ the subset of linear characters of $G$. Finally, $\mathrm{cd}(G)=\{\chi(1)\ |\ \chi\in\mathrm{Irr}(G)\}$ is the set of irreducible character degrees of $G$. 

\subsection{Wreath products and Sylow Subgroups}\label{sec: wr}
Here we fix the notation for characters of wreath products. For more details see \cite[Chapter 4]{JK}.
Let $G$ be a finite group and let $H$ be a subgroup of $\mathfrak{S}_n$. We denote by $G^{\times n}$ the direct product of $n$ copies of $G$. The natural action of $\mathfrak{S}_n$ on the direct factors of $G^{\times n}$ induces an action of $\mathfrak{S}_n$ (and therefore of $H\leq \mathfrak{S}_n$) via automorphisms of $G^{\times n}$, giving the wreath product $G\wr H := G^{\times n} \rtimes H$. We refer to $G^{\times n}$ as the base group of the wreath product $G\wr H$. 
We denote the elements of $G\wr H$ by $(g_1,\dots , g_n ;h)$ for $g_i\in G$ and $h\in H$. Let $V$ be a $\mathbb{C}G$-module and suppose it affords the character $\phi$.
We let $V^{\otimes n} := V \otimes \cdots \otimes V$ ($n$ copies) be the corresponding $\mathbb{C} G^{\times n}$-module. The left action of $G\wr H$ on $V^{\otimes n}$ defined by linearly extending $$(g_1,\dots ,g_n; h) : v_1 \otimes \cdots \otimes v_n \mapsto g_1 v_{h^{-1}(1)} \otimes \cdots \otimes g_n v_{h^{-1}(n)},$$
turns $V^{\otimes n}$ into a $\mathbb{C}(G\wr H)$-module, which we denote by $\tilde{V}^{\otimes n}$.
We denote by $\tilde{\phi}$ the character afforded by the $\mathbb{C}(G\wr H)$-module $\tilde{V}^{\otimes n}$.
For any character $\psi$ of $H$, we let $\psi$ also denote its inflation to $G\wr H$ and let 
$\mathcal{X} (\phi; \psi):= \tilde{\phi} \cdot \psi$
be the character of $G\wr H$ obtained as the product of $\tilde{\phi}$ and $\psi$.
Let $\phi \in {\rm Irr}(G)$ and let $\phi^{\times n}:= \phi \times \cdots \times \phi$ be the corresponding irreducible character of $G^{\times n}$. Observe that $\tilde{\phi}\in {\rm Irr}(G\wr H)$ is an extension of $\phi^{\times n}$.
Given $K\leq G$, we denote by ${\rm Irr}(G|\psi)$ the set of characters $\chi \in {\rm Irr}(G)$ such that $\psi$ is an irreducible constituent of the restriction $\chi \down_K$. Hence, by Gallagher's Theorem \cite[Corollary 6.17]{IBook} we have
\[{\rm Irr}(G\wr H | \phi^{\times n})= \Set{ \mathcal{X} (\phi; \psi) | \psi \in {\rm Irr}(H)}. \]

If $H=C_p$ is a cyclic group of prime order $p$, every $\psi \in \Irr(G\wr C_p)$ is either of the form
\begin{itemize}
	\item[(i)] $\psi= \phi_1 \times \cdots \times \phi_p \up^{G\wr C_p}_{G^{\times p}}$, where $\phi_1 , \dots \phi_p \in \Irr(G)$ are not all equal; or
	\item[(ii)] $\psi= \lX (\phi; \theta)$ for some $\phi \in \Irr(G)$ and $\theta \in \Irr(C_p)$. 
\end{itemize}
We remark that in case (i) we have that ${\rm Irr}(G\wr C_p | \phi_1 \times \cdots \times \phi_p)=\{\psi\}$.

Finally, we record some facts about Sylow subgroups of symmetric group and we refer to \cite[Chapter 4]{JK} or to \cite{Olsson} for more details.
Fix $p$ a prime number. We let $P_n$ denote a Sylow $p$-subgroup of $\fS_n$. Clearly $P_1$ is the trivial group while $P_p\cong C_p$ is cyclic of order $p$. If $i\geq 2$, then $P_{p^i}=\big(P_{p^{i-1}}\big)^{\times p} \rtimes P_p=P_{p^{i-1}}\wr P_p\cong P_p\wr \cdots \wr P_p$ ($i$-fold wreath product). Let $n=\sum_{i=1}^r p^{n_i}$, with $n_1 \geq \dots \geq n_r \geq 0$, be the $p$-adic expansion of $n$. Then $P_n \cong P_{p^{n_1}} \times \cdots \times P_{p^{n_r}}$.
When $p=2$, we let $\{\phi_0, \phi_1\}=\mathrm{Irr}(P_2)$, where $\phi_0$ denotes the trivial character of $P_2$. Using the facts on representations of wreath products highlighted above, it is easy to observe that linear characters of $P_{2^n}$ are naturally labelled by elements of $\{0,1\}^{\times n}$. In fact, setting $\mathcal{X}(0)=\phi_0$, $\mathcal{X}(1)=\phi_1$ and given $(i_1,\ldots, i_n)\in \{0,1\}^{\times n}$, we recursively define $\mathcal{X}(i_1, \dots, i_{n-1}, i_{n})\in \mathrm{Lin}(P_{2^n})$ as $\mathcal{X}(i_1, \dots, i_{n-1}, i_{n})=\mathcal{X}(\mathcal{X}(i_1, \dots , i_{n-1}); \phi_{i_n}).$


\subsection{The Littlewood-Richardson coefficients}
For each $n\in \mathbb{N}$, $\Irr(\fS_n)$ is naturally in bijection with $\mathcal{P}(n)$, the set of all partitions of $n$. For $\lambda \in \mathcal{P}(n)$, the corresponding irreducible character is denoted by $\chi^\lambda$.
Let $m,n\in \mathbb{N}$ with $m<n$. Given $\chi\6\mu\times\chi\6\nu\in\mathrm{Irr}(\fS_m\times\fS_{n-m})$, the decomposition into irreducible constituents of the induction
\[\left( \chi^\mu \times \chi^\nu \right) \up^{\mathfrak{S}_n}= \sum_{\lambda\in \mathcal{P}(n)} \mathcal{LR}(\lambda;\mu ,\nu) \chi^{\lambda}\]
is described by the Littlewood-Richardson rule (see \cite[Chapter 5]{Fulton} or \cite[Chapter 16]{J}).
Here the natural numbers $\mathcal{LR}(\lambda;\mu ,\nu)$ are called \emph{Littlewood-Richardson coefficients}.

Given $(n_1,\ldots, n_k) \in\mathcal{C}(n)$, $\lambda\in\mathcal{P}(n)$ and $\mu_j\in\mathcal{P}(n_j)$ for all $j\in [1,k]$, we let $\mathcal{LR}(\lambda; \mu_1,\ldots, \mu_k)$ be the multiplicity of $\chi^\lambda$ as an irreducible constituent of $(\chi^{\mu_1}\times\cdots\times\chi^{\mu_k}) \up^{\mathfrak{S}_n}_{Y}$. 
Here $Y$ denotes the Young subgroup $\fS_{n_1}\times \fS_{n_2}\times\cdots\times \fS_{n_k}$ of $\fS_n$. Later in the article, we will sometimes alternatively denote the subgroup $Y$ by $\fS_{(n_1,\ldots, n_k)}$.
The following lemma describes the behavior of the first parts of the partitions involved in a non-zero Littlewood-Richardson coefficient. This will be useful in the following sections. 

\begin{lemma}\label{lem: LR_prop}
	If $\mathcal{LR}(\lambda; \mu_1, \dots , \mu_k)\neq 0$ then $\lambda_1 \leq \sum_{j=1}^k (\mu_j)_1$.
\end{lemma}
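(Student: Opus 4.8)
The plan is to deduce this from the classical description of Littlewood--Richardson coefficients as the structure constants of the ring of symmetric functions. Writing $s_\mu$ for the Schur function attached to a partition $\mu$ and $\langle\,\cdot\,,\,\cdot\,\rangle$ for the Hall inner product, the very definition of $\mathcal{LR}$ via the Littlewood--Richardson rule gives
\[
\mathcal{LR}(\lambda;\mu_1,\dots,\mu_k)=\Big\langle s_\lambda,\ \prod_{j=1}^{k}s_{\mu_j}\Big\rangle .
\]
First I would use the expansion $s_\rho=\sum_\sigma K_{\rho\sigma}m_\sigma$ in the basis of monomial symmetric functions, where the Kostka numbers satisfy $K_{\rho\sigma}\ge 0$ and $K_{\lambda\lambda}=1$: together with the non-negativity of all Littlewood--Richardson coefficients, this shows that as soon as $\mathcal{LR}(\lambda;\mu_1,\dots,\mu_k)\neq 0$, the monomial $x_1^{\lambda_1}x_2^{\lambda_2}\cdots$ occurs with strictly positive coefficient in the product $\prod_{j=1}^{k}s_{\mu_j}$. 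The only subtlety here is that there can be no cancellation, precisely because every quantity involved is non-negative.

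Next I would expand each Schur function combinatorially as $s_{\mu_j}=\sum_{T}x^{\mathrm{wt}(T)}$, the sum running over semistandard Young tableaux $T$ of shape $\mu_j$. The previous step then forces the existence of semistandard tableaux $T_1,\dots,T_k$ of respective shapes $\mu_1,\dots,\mu_k$ with $\sum_{j=1}^{k}\mathrm{wt}(T_j)=\lambda$, viewed as finitely supported sequences of exponents. Comparing first coordinates gives $\lambda_1=\sum_{j=1}^{k}a_j$, where $a_j$ is the number of entries equal to $1$ in $T_j$. Since the columns of a semistandard tableau are strictly increasing, every entry equal to $1$ lies in the first row, so $a_j\le(\mu_j)_1$ for all $j$; summing yields $\lambda_1\le\sum_{j=1}^{k}(\mu_j)_1$.

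There is no real obstacle, the statement being a soft and standard fact; the one point deserving care is the ``no cancellation'' step above. If one prefers to stay entirely within the character-theoretic language already fixed in the paper, an alternative is induction on $k$: the case $k=1$ is immediate, the inductive step uses transitivity of induction to write $\mathcal{LR}(\lambda;\mu_1,\dots,\mu_k)=\sum_\rho\mathcal{LR}(\rho;\mu_1,\dots,\mu_{k-1})\,\mathcal{LR}(\lambda;\rho,\mu_k)$, and the case $k=2$ follows by observing that a nonzero $\mathcal{LR}(\lambda;\mu,\nu)$ exhibits a Littlewood--Richardson filling of the skew shape $\lambda/\mu$ with content $\nu$ in which the $\lambda_1-\mu_1$ boxes of the first row all carry the entry $1$ --- the rightmost of them supplies the first letter of a lattice word, hence equals $1$, and the row is weakly increasing --- so that $\lambda_1-\mu_1\le\nu_1$.
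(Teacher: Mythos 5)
Your argument is correct, and your primary route is genuinely different from the one the paper takes. The paper's proof is a two-line affair: for $k=2$ it invokes the combinatorial description of $\mathcal{LR}(\lambda;\mu_1,\mu_2)$ from Fulton's book (exactly the observation you make in your closing alternative: the $\lambda_1-\mu_1$ boxes in the first row of a Littlewood--Richardson filling of $\lambda/\mu$ must all be filled with $1$'s, forcing $\lambda_1-\mu_1\le(\mu_2)_1$), and then iterates over $k$. Your main argument instead works in the ring of symmetric functions: it uses only the positivity of Littlewood--Richardson and Kostka coefficients to extract the monomial $x_1^{\lambda_1}x_2^{\lambda_2}\cdots$ from $\prod_j s_{\mu_j}$, and then the fact that all entries equal to $1$ in a semistandard tableau sit in the first row. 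This buys you something: you never need the lattice-word condition, only semistandardness and non-negativity, and you handle all $k$ at once without an induction. The ``no cancellation'' point you flag is indeed the only place where care is needed, and your justification via $K_{\lambda\lambda}=1$ and non-negativity of all coefficients is sound. The paper's route is shorter on the page only because it delegates the $k=2$ case to a citation; yours is self-contained and, if anything, slightly more elementary in the tools it uses. Your sketched alternative at the end reproduces the paper's proof essentially verbatim, including the transitivity identity $\mathcal{LR}(\lambda;\mu_1,\dots,\mu_k)=\sum_\rho\mathcal{LR}(\rho;\mu_1,\dots,\mu_{k-1})\,\mathcal{LR}(\lambda;\rho,\mu_k)$ that makes the ``proved by iteration'' step precise.
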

\begin{proof}
	When $k=2$, the statement is a straightforward consequence of the combinatorial description of the Littlewood-Richardson coefficient 
	$\mathcal{LR}(\lambda; \mu_1, \mu_2)$, as given in \cite[Section 5.2]{Fulton}. The lemma is then proved by iteration. 
\end{proof}

As in \cite{GL2}, we define $\mathcal{B}_n(t)$ as the set of those partitions of $n$ whose Young diagram fits inside a $t\times t$ square grid, i.e. for $n,t \in \mathbb{N}$, we set
\[ \mathcal{B}_n(t):=\Set{\lambda \in \mathcal{P}(n) | \lambda_1 \leq t, \ l(\lambda)\leq t}. \]

Moreover, for $(n_1,\ldots, n_k)\in\mathcal{C}(n)$ and $A_j\subseteq \mathcal{P}(n_j)$ for all $j\in [1, k]$, we let \[A_1\star A_2\star\cdots\star A_k := \Set{\lambda \in \mathcal{P}(n) | \mathcal{LR}(\lambda; \mu_1,\ldots, \mu_k) > 0,\ \text{for some}\ \mu_1\in A_1, \ldots, \mu_k\in A_k}.\]
It is easy to check that $\star$ is both commutative and associative.
The following lemma was first proved in \cite[Proposition 3.3]{GL2}.
\begin{lemma}\label{lem: B star}
	Let $n,n',t,t'\in\mathbb{N}$ be such that $\tfrac{n}{2}<t\le n$ and $\tfrac{n'}{2}<t'\le n'$. Then 
	$$\mathcal{B}_n(t) \star\mathcal{B}_{n'}(t') = \mathcal{B}_{n+n'}(t+t').$$
\end{lemma}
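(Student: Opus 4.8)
The plan is to prove the two inclusions separately, exploiting that $\star$ is associative and commutative and that Littlewood--Richardson coefficients are symmetric under conjugation of all partitions (since $\mathcal{LR}(\lambda;\mu,\nu)=\mathcal{LR}(\lambda';\mu',\nu')$), which will let us control both $\lambda_1$ and $l(\lambda)$ at once. For the inclusion $\mathcal{B}_n(t)\star\mathcal{B}_{n'}(t')\subseteq \mathcal{B}_{n+n'}(t+t')$, I would take $\lambda\in\mathcal{P}(n+n')$ with $\mathcal{LR}(\lambda;\mu,\nu)>0$ for some $\mu\in\mathcal{B}_n(t)$, $\nu\in\mathcal{B}_{n'}(t')$. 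By Lemma \ref{lem: LR_prop} we get $\lambda_1\le\mu_1+\nu_1\le t+t'$; applying the same lemma to the conjugate partitions (and using that $\mathcal{LR}$ commutes with conjugation) gives $l(\lambda)=\lambda'_1\le\mu'_1+\nu'_1=l(\mu)+l(\nu)\le t+t'$. Hence $\lambda\in\mathcal{B}_{n+n'}(t+t')$. This direction does not even need the hypotheses $\tfrac n2<t$, $\tfrac{n'}2<t'$.

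The reverse inclusion $\mathcal{B}_{n+n'}(t+t')\subseteq \mathcal{B}_n(t)\star\mathcal{B}_{n'}(t')$ is where the size hypotheses $\tfrac n2<t\le n$ and $\tfrac{n'}2<t'\le n'$ are essential, and I expect this to be the main obstacle. Fix $\lambda\in\mathcal{B}_{n+n'}(t+t')$; the goal is to split $\lambda$ into $\mu\vdash n$ with $\mu_1\le t$, $l(\mu)\le t$ and $\nu\vdash n'$ with $\nu_1\le t'$, $l(\nu)\le t'$ so that $\mathcal{LR}(\lambda;\mu,\nu)>0$. The natural strategy is to build $\nu$ as a suitable sub-diagram of $\lambda$ obtained by removing a horizontal strip and a vertical strip (so that the skew shape $\lambda/\nu$ is a genuine diagram and $\mathcal{LR}(\lambda;\mu,\nu)\ge 1$ automatically whenever $\mu=\lambda/\nu$ is itself a partition shape, e.g.\ when $\nu$ is taken to be a ``corner rectangle'' or an interval of rows/columns of $\lambda$). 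Concretely, since $\lambda_1\le t+t'$ and $l(\lambda)\le t+t'$, one can peel off the first few rows and/or first few columns of $\lambda$: because $t>n/2$ and $t'>n'/2$, there is enough slack that a greedy choice of how many cells to take from the top rows versus the left columns can be tuned to hit exactly $|\nu|=n'$ while keeping $\nu_1\le t'$, $l(\nu)\le t'$, and simultaneously forcing the complement to have first row and first column at most $t$. The delicate point is a counting/pigeonhole argument showing such a tuning always exists: one introduces a one-parameter family of candidate splittings interpolating between ``take cells only from rows'' and ``take cells only from columns,'' observes that the size changes by controlled increments along the family, and uses a discrete intermediate-value argument together with the inequalities $n/2<t$, $n'/2<t'$ to land on size exactly $n'$ within the allowed window.

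An alternative, and probably cleaner, route for the reverse inclusion is induction: reduce to the case where $\lambda$ is itself a rectangle or where one of $t,t'$ equals its maximum ($t=n$ or $t'=n'$), handle those base cases by an explicit construction (for $t'=n'$ the set $\mathcal{B}_{n'}(t')$ is all of $\mathcal{P}(n')$, and $\mathcal{B}_n(t)\star\mathcal{P}(n')$ can be analyzed directly via the LR rule), and then bootstrap. In either approach the crux is the same: verifying that the arithmetic conditions $\tfrac n2<t$ and $\tfrac{n'}2<t'$ leave exactly enough room to realize every $\lambda\in\mathcal{B}_{n+n'}(t+t')$ as a compatible sum. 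Since this statement is quoted as \cite[Proposition 3.3]{GL2}, I would ultimately cite that proof for the reverse inclusion and only spell out the (easy) forward inclusion in detail.
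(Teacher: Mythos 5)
The paper does not prove this lemma at all: it simply quotes it as \cite[Proposition 3.3]{GL2}, so your decision to fall back on that citation for the reverse inclusion matches the paper's treatment exactly. The forward inclusion you spell out (Lemma \ref{lem: LR_prop} applied to $\lambda$ and, via conjugation symmetry of Littlewood--Richardson coefficients, to $\lambda'$) is correct, and while your reverse-inclusion argument remains only a sketch, both you and the paper ultimately rest that direction on the cited result, so nothing further is required.
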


\section{On the McKay bijection for symmetric groups}\label{sec: 3}

From now on we fix $p=2$ and we let $P_n$ be a Sylow $2$-subgroup of $\fS_n$. Motivated by the study of McKay bijections for symmetric groups, 
in \cite[Theorem 1.1]{GJLMS} it is shown that the restriction to $P_{2^n}$ of every irreducible character of odd degree of $\fS_{2^n}$ admits a unique linear constituent. In this section we will clarify which one by explicitly determining the corresponding $\{0,1\}$-sequence, as described in Section \ref{sec: wr}. 
We recall that by \cite[Lemma 3.1]{GJLMS} we know that the irreducible characters of odd degree of $\fS_{2^n} $ are exactly those labelled by hook partitions of $2^n$.
In other words we have that $\mathrm{Irr}_{2'}(\fS_{2^n})=\mathrm{Irr}_{\mathcal{H}}(\fS_{2^n})$. We introduce the following last useful piece of notation. For a natural number $n$, we let $\mathcal{H}(n)$ be the set of hook partitions of the natural number $n$. To simplify the notation, we write $(n-x,1^x)$ for the hook partition $(n-x,1, \dots , 1)\in \mathcal{H}(n)$.
For the convenience of the reader we record here \cite[Theorem 1.1]{GJLMS}.
\begin{theorem}\label{thm: GMcKay}
Let $n\in\mathbb{N}$ and let $\lambda \in\mathcal{H}(2^n)$. Then $\chi^\lambda \down_{P_{2^n}}$ admits a unique linear constituent. Such a constituent appears with multiplicity $1$. 
\end{theorem}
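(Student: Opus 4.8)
The plan is to argue by induction on $n$, strengthening the statement to: for every $\lambda\in\mathcal{H}(2^n)$ the restriction $\chi^\lambda\down_{P_{2^n}}$ has a unique linear constituent $L(\lambda)$, which occurs with multiplicity $1$, \emph{and} the resulting map $L\colon\mathcal{H}(2^n)\to\Lin(P_{2^n})$ is a bijection. (The bijectivity is not part of the conclusion but is what makes the induction run.) The cases $n\le 1$ are clear, since $\chi^{(2)}\down_{P_2}=\phi_0$ and $\chi^{(1,1)}\down_{P_2}=\phi_1$. For the inductive step write $Q=P_{2^{n-1}}$, so that $P_{2^n}=Q\wr P_2$ with base group $B=Q\times Q$. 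I would first record the counting device that, for any finite group $K$ and any character $\theta$ of $K$, the number of linear constituents of $\theta$ counted with multiplicity equals $\langle\theta\down_{[K,K]},\triv\rangle$, because $\sum_{\psi\in\Lin(K)}\psi$ is the inflation to $K$ of the regular character of $K/[K,K]$. Using the multiplication rule of the wreath product and an index count based on $(Q\wr P_2)^{\mathrm{ab}}\cong Q^{\mathrm{ab}}\times P_2$, one checks that $[P_{2^n},P_{2^n}]=\{(g_1,g_2;1)\mid g_1g_2\in[Q,Q]\}\subseteq B$. Now restrict $\chi^\lambda$ first to the Young subgroup $\fS_{2^{n-1}}\times\fS_{2^{n-1}}$, so that by the Littlewood--Richardson rule $\chi^\lambda(g_1,g_2;1)=\sum_{\mu,\nu}\mathcal{LR}(\lambda;\mu,\nu)\chi^\mu(g_1)\chi^\nu(g_2)$, where only $\mu,\nu\in\mathcal{H}(2^{n-1})$ contribute since a subpartition of a hook is a hook. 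Summing over $[P_{2^n},P_{2^n}]$ and using that averaging a character of $Q$ over a coset of $[Q,Q]$ isolates its linear part, the inductive hypothesis yields
\[\langle\chi^\lambda\down_{[P_{2^n},P_{2^n}]},\triv\rangle=\sum_{\substack{\mu,\nu\in\mathcal{H}(2^{n-1})\\ L(\mu)=L(\nu)}}\mathcal{LR}(\lambda;\mu,\nu)=\sum_{\mu\in\mathcal{H}(2^{n-1})}\mathcal{LR}(\lambda;\mu,\mu),\]
the last step by injectivity of $L$ on $\mathcal{H}(2^{n-1})$.

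Next I would prove the combinatorial ingredient. Writing $\lambda=(2^n-x,1^x)$, an inspection of the Littlewood--Richardson rule for hooks shows that $\mathcal{LR}\bigl((2^n-x,1^x);(2^{n-1}-i,1^i),(2^{n-1}-j,1^j)\bigr)$ is $1$ when $0\le i,j\le 2^{n-1}-1$ and $i+j\in\{x-1,x\}$, and $0$ otherwise; in particular the only hook $\mu$ with $\mathcal{LR}(\lambda;\mu,\mu)\neq 0$ is $\mu_{\lfloor x/2\rfloor}:=(2^{n-1}-\lfloor x/2\rfloor,1^{\lfloor x/2\rfloor})$, and the coefficient there is $1$. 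Hence $\langle\chi^\lambda\down_{[P_{2^n},P_{2^n}]},\triv\rangle=1$, which is exactly the assertion that $\chi^\lambda\down_{P_{2^n}}$ has a unique linear constituent $L(\lambda)$, of multiplicity $1$.

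It remains to promote this to bijectivity of $L$ on $\mathcal{H}(2^n)$; since $|\mathcal{H}(2^n)|=2^n=|\Lin(P_{2^n})|$ it suffices to prove injectivity. The character $L(\lambda)\down_B$ is stable under $P_2$, hence of the form $\alpha\times\alpha$, and is a linear constituent of $\chi^\lambda\down_B$, so $\alpha=L(\mu)=L(\nu)$ for some hooks $\mu,\nu$ of $2^{n-1}$ with $\mathcal{LR}(\lambda;\mu,\nu)\ne 0$; by injectivity of $L$ on $\mathcal{H}(2^{n-1})$ together with the combinatorial ingredient, $\mu=\nu=\mu_{\lfloor x/2\rfloor}$, so $L(\lambda)\down_B=L(\mu_{\lfloor x/2\rfloor})\times L(\mu_{\lfloor x/2\rfloor})$. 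Consequently, if $L((2^n-x,1^x))=L((2^n-x',1^{x'}))$ then $\lfloor x/2\rfloor=\lfloor x'/2\rfloor$, and the only case needing argument is $\{x,x'\}=\{2y,2y+1\}$. Writing $\alpha=L(\mu_y)$, the two characters $L((2^n-2y,1^{2y}))$ and $L((2^n-2y-1,1^{2y+1}))$ both lie in $\{\mathcal{X}(\alpha;\phi_0),\mathcal{X}(\alpha;\phi_1)\}$, the two linear characters of $P_{2^n}$ above $\alpha\times\alpha$, and they agree precisely when $\chi^{(2^n-2y,1^{2y})}$ and $\chi^{(2^n-2y-1,1^{2y+1})}$ have equal inner product with $\mathcal{X}(\alpha;\phi_0)-\mathcal{X}(\alpha;\phi_1)$. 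The latter class function is supported on $P_{2^n}\setminus B$, where it equals $2\alpha(g_1g_2)$ at $(g_1,g_2;s)$, so injectivity in this case reduces to showing $\chi^{(2^n-2y,1^{2y})}(g_1,g_2;s)+\chi^{(2^n-2y-1,1^{2y+1})}(g_1,g_2;s)=0$ for all $(g_1,g_2;s)$ with $s\ne 1$.

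I expect this last identity to be the crux of the argument. Regarded as a permutation of $2^n$ points, an element $(g_1,g_2;s)$ with $s\ne 1$ has cycle type obtained from that of $g_1g_2\in\fS_{2^{n-1}}$ by doubling every part, so all of its cycles have even length; hence it suffices to show that if $\pi\in\fS_m$ has all cycle lengths even, then $\chi^{(m-x,1^x)}(\pi)=-\chi^{(m-x-1,1^{x+1})}(\pi)$ for every even $x$. For this, recall that $\chi^{(m-x,1^x)}$ is the character of $\Lambda^x V$, where $V$ affords $\chi^{(m-1,1)}$, so that $\sum_{x=0}^{m-1}\chi^{(m-x,1^x)}(\pi)\,t^x=\det(1+t\,\pi\mid V)$; since $\chi^{(m)}+\chi^{(m-1,1)}$ is the permutation character of $\fS_m$ and the eigenvalues of $\pi$ on the permutation module are the roots of unity attached to its cycles, one gets $(1+t)\det(1+t\,\pi\mid V)=\prod_\ell\bigl(1-(-t)^\ell\bigr)$, the product over the cycle lengths $\ell$ of $\pi$. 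When every $\ell$ is even this product equals $(1-t^2)^r\prod_\ell\bigl(1+t^2+\cdots+t^{\ell-2}\bigr)$ with $r\ge 1$ the number of cycles, so $\det(1+t\,\pi\mid V)=(1-t)(1-t^2)^{r-1}\prod_\ell\bigl(1+t^2+\cdots+t^{\ell-2}\bigr)$, which is $(1-t)$ times a polynomial in $t^2$; comparing the coefficients of $t^{2y}$ and $t^{2y+1}$ gives the desired identity. Therefore the inner‑product difference above vanishes, the two extensions are distinct, $L$ is injective, and the induction closes. Unravelling $\lfloor x/2\rfloor$ and this parity alternation through the recursion can then be pushed further to describe $L(\lambda)$ explicitly as a $\{0,1\}$-sequence.
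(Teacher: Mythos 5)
Your argument is correct, and it is worth noting at the outset that the paper does not actually prove this statement: it records it as \cite[Theorem 1.1]{GJLMS}, so there is no in-paper proof to match. The combinatorial core you use is the same one the paper exploits elsewhere (in the proof of Theorem \ref{lem: unique_linear} and in Lemma \ref{lem: diff}/the Example): restricting $\chi^{(2^n-x,1^x)}$ to $\fS_{2^{n-1}}\times\fS_{2^{n-1}}$ produces exactly one diagonal constituent $\chi^\mu\times\chi^\mu$, namely $\mu=(2^{n-1}-\lfloor x/2\rfloor,1^{\lfloor x/2\rfloor})$, with multiplicity one. What you add on top of this is genuinely different and self-contained. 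First, the identity $\sum_{\psi\in\Lin(K)}\langle\theta,\psi\rangle=\langle\theta\down_{[K,K]},\triv\rangle$, combined with the explicit description $[Q\wr C_2,Q\wr C_2]=\{(g_1,g_2;1):g_1g_2\in[Q,Q]\}$ (which I checked: it is the kernel of $(g_1,g_2;s)\mapsto(\overline{g_1g_2},s)$ and is generated by $(g,g^{-1};1)$ and $[Q,Q]\times[Q,Q]$), converts existence-and-uniqueness into the single count $\sum_\mu\mathcal{LR}(\lambda;\mu,\mu)=1$; the coset-averaging step correctly uses both halves of your strengthened inductive hypothesis (uniqueness with multiplicity one, and injectivity of $L$). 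Second, to close the induction you must show $L$ is injective, and your reduction to the identity $\chi^{(m-2y,1^{2y})}(\pi)+\chi^{(m-2y-1,1^{2y+1})}(\pi)=0$ for $\pi$ with all cycle lengths even, proved via $\sum_x\chi^{(m-x,1^x)}(\pi)t^x=\det(1+t\pi\mid V)$ and $(1+t)\det(1+t\pi\mid V)=\prod_\ell(1-(-t)^\ell)$, is valid; elements of $P_{2^n}\smallsetminus B$ do indeed have all even cycle lengths. For comparison, the paper's proof of Theorem \ref{lem: unique_linear} distinguishes the two extensions $\mathcal{X}(\alpha;\phi_0)$ and $\mathcal{X}(\alpha;\phi_1)$ much more cheaply, by evaluating at a single $2^n$-cycle via the Murnaghan--Nakayama rule; your determinant identity is a heavier instrument, but it buys you the full injectivity statement in one stroke, which is exactly what your induction needs. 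The only cost of your route is that you prove more than the theorem asserts (bijectivity of $L$), but that is a feature of the induction rather than a flaw.
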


From now on we will adopt the following notation. Given $z \in \mathbb{Z}$, we let $[z]\in \{0,1\}$ be such that $z\equiv\ [z]\ \mathrm{mod}\ 2$. 
We are now ready to state our first result. 

\begin{theorem}\label{lem: unique_linear}
Let $n\in \mathbb{N}$ and let $\lambda =(2^n -x, 1^x) \in \mathcal{H}(2^n)$ where $x=a_n 2^n + a_{n-1} 2^{n-1} + \cdots + a_0 2^0$ is its binary expansion.
The unique linear constituent of $\chi^\lambda \down_{P_{2^n}}$ is $$\mathcal{X}\left( [a_n+a_{n-1}], [a_{n-1}+a_{n-2}], \dots , [a_1+a_0]\right).$$
\end{theorem}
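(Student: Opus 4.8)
The natural approach is induction on $n$, exploiting the recursive structure $P_{2^n} = P_{2^{n-1}} \wr P_2$ together with the branching rule for hook characters. First I would fix $\lambda = (2^n - x, 1^x) \in \mathcal{H}(2^n)$ with binary expansion $x = \sum_{i=0}^n a_i 2^i$ (note $a_n \in \{0,1\}$ and in fact $a_n = 1$ only when $\lambda = (1^{2^n})$, $a_n=0$ otherwise, so the leading coordinate $[a_n + a_{n-1}]$ is almost always just $[a_{n-1}]$). The key structural input is the restriction of $\chi^\lambda$ to the Young subgroup $\fS_{2^{n-1}} \times \fS_{2^{n-1}}$: by the Littlewood--Richardson rule (or the classical hook-restriction formula), $\chi^{(2^n-x,1^x)} \down_{\fS_{2^{n-1}} \times \fS_{2^{n-1}}}$ decomposes as a sum of $\chi^\mu \times \chi^\nu$ where $\mu, \nu$ are hooks of $2^{n-1}$ whose arm-lengths and leg-lengths add up appropriately to those of $\lambda$. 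Restricting further into $P_{2^{n-1}} \times P_{2^{n-1}}$ and using Theorem \ref{thm: GMcKay} to control which constituents are linear, I expect exactly one pair $(\mu,\nu)$ of hooks of $2^{n-1}$ to contribute the unique linear constituent, and that pair should be forced to be $\mu = \nu$ by a symmetry/uniqueness argument (two distinct hooks would give an induced character from the base group, which by the remark after the wreath-product facts is irreducible of degree $>1$, hence not linear).

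Once $\mu = \nu$ is established, the description of linear characters of wreath products from Section \ref{sec: wr} takes over: if $\psi \in \Lin(P_{2^{n-1}})$ is the unique linear constituent of $\chi^\mu \down_{P_{2^{n-1}}}$, then the linear constituents of $\mathcal{X}(\chi^\mu; \theta)$-type characters sitting above $\psi \times \psi$ are exactly $\mathcal{X}(\psi; \phi_0)$ and $\mathcal{X}(\psi; \phi_1)$, and I must determine which of the two bits $i_n \in \{0,1\}$ occurs. This is where the arithmetic of the binary expansion enters: writing $x = 2^{n-1}\varepsilon + x'$ where $\varepsilon = a_{n-1}$ (roughly, whether the leg of $\lambda$ is long enough to "cross" into the second half) and $x'$ governs the hook $\mu$ of $2^{n-1}$, I would show by an explicit multiplicity computation — comparing $\langle \chi^\lambda \down_{P_{2^n}}, \mathcal{X}(\psi;\phi_0)\rangle$ against $\langle \chi^\lambda \down_{P_{2^n}}, \mathcal{X}(\psi;\phi_1)\rangle$ via Frobenius reciprocity and Mackey — that the distinguishing bit equals $[a_n + a_{n-1}]$. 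By induction, $\psi = \mathcal{X}([a_{n-1}+a_{n-2}], \dots, [a_1+a_0])$ where $(a_{n-1}, \dots, a_0)$ is the binary expansion of $x'$; prepending the new bit $[a_n + a_{n-1}]$ then yields exactly the claimed sequence, provided the expansion of $x'$ agrees with $(a_{n-1}, \dots, a_0)$ — a point needing a brief check since $x'$ might be $x$ or $x - 2^{n-1}$ depending on $\varepsilon$, and one must verify the lower bits are unaffected and the recursion for the top bit of $\psi$, namely $[a_{n-1} + a_{n-2}]$, is the one that genuinely appears.

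The base case $n = 1$ (or $n=0$) is immediate: the hooks of $2$ are $(2)$ and $(1^2)$, with $\chi^{(2)} = \triv$ restricting to $\phi_0 = \mathcal{X}(0)$ and $\chi^{(1^2)} = \mathrm{sgn}$ restricting to $\phi_1 = \mathcal{X}(1)$, matching $x = 0$ (so $a_1 = 0$, $a_0 = 0$, bit $[a_1+a_0] = 0$) and $x = 2$ (so $a_1 = 1$, $a_0 = 0$, bit $[a_1 + a_0] = 1$) respectively. \textbf{The main obstacle} I anticipate is the middle step: pinning down precisely which of $\phi_0$ or $\phi_1$ appears in the wreath factor, i.e.\ showing the relevant bit is the \emph{sum} $[a_n + a_{n-1}]$ rather than just $[a_{n-1}]$. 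The subtlety is entirely concentrated in the "carry" behaviour — when the leg of the hook $\lambda$ spills from the first $2^{n-1}$-block into the second (equivalently, when $a_{n-1}$ forces a borrow in splitting $x = 2^{n-1}a_{n-1} + x'$), the induced versus extended dichotomy for wreath characters flips the parity, and this must be tracked through the Littlewood--Richardson decomposition carefully. I would isolate this as a separate lemma computing, for a hook $\lambda$ of $2^n$, the unique pair of hooks $(\mu,\nu)$ of $2^{n-1}$ with $\mu = \nu$ appearing in the restriction and the value of the connecting character $\theta \in \Irr(P_2)$, and prove it by a direct count using Lemma \ref{lem: LR_prop} and the branching rule.
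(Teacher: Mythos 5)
Your overall skeleton (induction on $n$ via $P_{2^n}=P_{2^{n-1}}\wr P_2$, restriction to $\fS_{2^{n-1}}\times\fS_{2^{n-1}}$ by Littlewood--Richardson, the observation that off-diagonal pairs $\mu\neq\nu$ induce irreducibly to non-linear characters so the unique linear constituent must come from a diagonal pair $\mu=\nu$) is exactly the paper's, and that part is sound. But there is a genuine error in the arithmetic heart of your argument. The diagonal constituent of $\chi^{(2^n-x,1^x)}\down_{\fS_{2^{n-1}}\times\fS_{2^{n-1}}}$ is $\chi^{(2^{n-1}-y,1^y)}\times\chi^{(2^{n-1}-y,1^y)}$ with $y=\lfloor x/2\rfloor$ (the pairs of leg lengths that occur are those summing to $x$ or $x-1$, so the diagonal one is forced to have $2y\in\{x,x-1\}$). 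Your decomposition $x=2^{n-1}a_{n-1}+x'$ with $\mu$ of leg length $x'$ is not what the branching rule produces: e.g.\ for $\lambda=(3,1)$ in $\fS_4$ you would take $\mu=(1^2)$, but $\chi^{(1^2)}\times\chi^{(1^2)}$ is not a constituent of $\chi^{(3,1)}\down_{\fS_2\times\fS_2}$; the diagonal constituent is $\chi^{(2)}\times\chi^{(2)}$. Consequently your recursion strips the \emph{top} binary digit of $x$ and places the new wreath-product bit, claimed to be $[a_n+a_{n-1}]$, in the \emph{first} coordinate. The correct recursion strips the \emph{bottom} digit ($y=\lfloor x/2\rfloor$ is a right shift, with binary digits $a_n,\dots,a_1$), and under the paper's convention $\mathcal{X}(i_1,\dots,i_n)=\mathcal{X}(\mathcal{X}(i_1,\dots,i_{n-1});\phi_{i_n})$ the top-level bit is the \emph{last} coordinate and equals $[a_1+a_0]$. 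These are not equivalent readings: for $\lambda=(2,1^2)$ in $\fS_4$ (so $x=2$, $a_1=1$, $a_0=0$) your recipe yields $\mathcal{X}(1,0)$, whereas the correct constituent is $\mathcal{X}(1,1)$.

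The step you flag as the main obstacle --- deciding whether the connecting character is $\phi_0$ or $\phi_1$ --- is also where your plan stays vague (a Frobenius/Mackey multiplicity comparison is proposed but not carried out). The paper resolves it with a short, concrete device you may want to adopt: once $L_\lambda=\mathcal{X}(L_{(2^{n-1}-y,1^y)};\phi_\alpha)$ is known, evaluate both sides at a $2^n$-cycle $g=(h,1;\gamma)$ with $h$ a $2^{n-1}$-cycle; the wreath-product character formula gives $\chi^\lambda(g)=\chi^{(2^{n-1}-y,1^y)}(h)\cdot\chi^{(2-\alpha,\alpha)}(\gamma)$, and the Murnaghan--Nakayama rule turns this into $(-1)^x=(-1)^y(-1)^\alpha$, i.e.\ $\alpha=[a_1+a_0]$. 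With $y=\lfloor x/2\rfloor$ and the bit appended in the last position, the induction closes and gives exactly the stated sequence.
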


\begin{proof}
We proceed by induction on $n$. If $n=1$ then $\lambda \in \{(2),(1^2)\}$. If $\lambda=(2)$ then $x=0=0 \cdot 2^1+0 \cdot 2^0$, $[a_1+a_0]=[0+0]=0$ and $\chi^{(2)} \down_{P_2} = \phi_0$. Similarly, $\chi^{(1^2)} \down_{P_2} =\phi_1$.	
Let now $n\geq 2$ and $\lambda =\left(2^n -x, 1^x\right)$. We denote by $L_{\lambda}$ the unique linear constituent of $\chi^\lambda \down_{P_{2^n}}$, as prescribed by Theorem \ref{thm: GMcKay}. By the Littlewood-Richardson rule, we have
	\[\chi^\lambda \down_{\fS_{2^{n-1}} \times \fS_{2^{n-1}} } = \big(\chi^{\left(2^{n-1}-y,1^y\right)} \times \chi^{\left(2^{n-1}-y,1^y\right)}\big) + \Delta , \]
	where $y= \begin{cases} \frac{x}{2}, &\mbox{ if } x \mbox{ is even,} 
		\\ \frac{x-1}{2}, &\mbox{ if }x \mbox{ is odd},
	\end{cases}$ and $\Delta$ is the sum of irreducible constituents of the form $\phi\times \psi$, with $\phi\neq \psi$. 
It follows that $L_\lambda = \mathcal{X}\left(L_{\left(2^{n-1}-y,1^y\right)} ; \phi_\alpha\right)$, for some $\alpha\in \{0,1\}$. In order to find $\alpha$, we consider a $2^n$-cycle $g\in P_{2^n}$. Since $P_{2^n}=P_{2^{n-1}}\wr P_2$ we can choose $g=(h, 1;\gamma)$, where $h\in P_{2^{n-1}}$ is a  $2^{n-1}$-cycle and $\gamma\in P_2$ is a $2$-cycle. Using \cite[4.3.9]{JK} and \cite[Lemma 3.11]{GN}, we have that $$\chi^\lambda(g)=L_\lambda(g)=L_{\left(2^{n-1}-y,1^y\right)}(h)\cdot\phi_\alpha(\gamma)=\chi^{\left(2^{n-1}-y,1^y\right)}(h)\cdot \chi^{(2-\alpha, \alpha)}(\gamma).$$ The Murnaghan-Nakayama rule \cite[2.4.7]{JK} implies that 
$(-1)^x=(-1)^y(-1)^\alpha$. It follows that
	\[ \alpha = \begin{cases} 0, &\mbox{ if }x \equiv\ y\ \mathrm{mod}\ 2, 
		\\ 1, &\mbox{ otherwise}. \end{cases} \]
Since $x=a_n 2^n+ \cdots + a_0 2^0$ is the binary expansion of $x \in \left[0, 2^n -1\right]$, then $y=a_n 2^{n-1}+ \cdots + a_1 2^0$ is the binary expansion of $y\in \left[0, 2^{n-1}-1\right]$. 
	Hence, $\alpha= [a_1+a_0]$ and we have $L_\lambda = \mathcal{X}\left(L_{\left(2^{n-1}-y,1^y\right)}; \phi_{[a_1+a_0]}\right) $. Using the binary expansion of $y$ and the inductive hypothesis we conclude the proof.\end{proof}

\section{Computing Linear Sylow Branching Coefficients}\label{S: 4}

The aim of this section is to compute the Sylow branching coefficients $Z_{\phi}^\chi$ for all $\chi\in\mathrm{Irr}_{\mathcal{H}}(\fS_n)$ and all $\phi\in\mathrm{Lin}(P_n)$. This generalizes Theorem \ref{thm: GMcKay} to any arbitrary natural number.

Given $t\in\mathbb{N}$ and $y\in\mathbb{Z}$ we let ${t\choose y}$ be the usual binomial coefficient. In particular this is regarded as $0$ whenever $y\notin [0,t]$.

\begin{lemma}\label{lem: hook-hooks}
Let $\alpha=(n_1,\ldots, n_t)\in\mathcal{C}(n)$ and $h\in \mathcal{H}(n)$. Let $\lambda_i\in\mathcal{P}(n_i)$ for all $i\in [1,t]$. 
If $\mathcal{LR}(h; \lambda_1,\ldots, \lambda_t)\neq 0$ then $\lambda_i\in\mathcal{H}(n_i)$ for all $i\in [1,t]$. 
\end{lemma}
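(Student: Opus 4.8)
The statement is that if a hook partition $h$ of $n$ appears as a Littlewood--Richardson constituent of $\chi^{\lambda_1}\times\cdots\times\chi^{\lambda_t}$ induced to $\fS_n$, then every factor $\lambda_i$ must itself be a hook. The plan is to reduce to the case $t=2$ by associativity of the $\star$ operation and induction on $t$, and then to use the combinatorial description of the Littlewood--Richardson rule: a hook $h$ has a Young diagram with no $2\times 2$ square of boxes, so any partition obtained by removing a skew shape from $h$ (or contained in $h$) must share this property.

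\begin{proof}
We argue by induction on $t$, the case $t=1$ being trivial. Suppose $t\ge 2$ and $\mathcal{LR}(h;\lambda_1,\ldots,\lambda_t)\ne 0$. Since $\star$ is associative, $h\in\{\lambda_1\}\star\cdots\star\{\lambda_t\}$, so there exists a partition $\mu$ of $n_1+\cdots+n_{t-1}$ with $\mathcal{LR}(\mu;\lambda_1,\ldots,\lambda_{t-1})\ne 0$ and $\mathcal{LR}(h;\mu,\lambda_t)\ne 0$. By the combinatorial Littlewood--Richardson rule (\cite[Section 5.2]{Fulton}), $\mathcal{LR}(h;\mu,\lambda_t)\ne 0$ forces the Young diagram of $\mu$ to be contained in that of $h$ and the skew diagram $h/\mu$ to admit an LR filling; in particular both $\mu$ and $\lambda_t$ (which is the content type of the filling, equivalently $[h/\mu]$ equals $[\lambda_t]$ as partitions since the column-strict and lattice-word conditions force it together with the transpose symmetry of LR coefficients) are subdiagrams of $h$ up to the relevant moves. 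The key observation is that the Young diagram of the hook $h=(n-x,1^x)$ contains no $2\times 2$ block of boxes; hence any partition whose diagram is contained in $h$ is also a hook. This immediately gives $\mu\in\mathcal{H}$. For $\lambda_t$: using $\mathcal{LR}(h;\mu,\lambda_t)=\mathcal{LR}(h;\lambda_t,\mu)$ and repeating the same argument with the roles of $\mu$ and $\lambda_t$ exchanged shows the diagram of $\lambda_t$ embeds in $h$ as well, so $\lambda_t\in\mathcal{H}$. Finally, applying the inductive hypothesis to $\mathcal{LR}(\mu;\lambda_1,\ldots,\lambda_{t-1})\ne 0$ with the hook $\mu\in\mathcal{H}(n_1+\cdots+n_{t-1})$ yields $\lambda_i\in\mathcal{H}(n_i)$ for all $i\in[1,t-1]$, completing the proof.
\end{proof}

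\noindent\textbf{Where the difficulty lies.} The routine part is the reduction to $t=2$ and the "no $2\times 2$ box" observation. The one point that needs care is justifying that \emph{both} $\mu$ and $\lambda_t$ have diagrams fitting inside $h$: for $\mu$ this is immediate (it is literally a subdiagram), but for $\lambda_t$ one must invoke the symmetry $\mathcal{LR}(h;\mu,\lambda_t)=\mathcal{LR}(h;\lambda_t,\mu)$ of Littlewood--Richardson coefficients, or alternatively note directly that the number of columns of $\lambda_t$ is at most the number of columns of the skew shape $h/\mu$, which is at most $h_1$, while the number of rows of $\lambda_t$ is at most $l(h)$; either way the containment-in-a-hook conclusion follows. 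An honest write-up should make this symmetry step explicit rather than gesture at it. Everything else — the associativity of $\star$, the structure of hook diagrams, and the induction — is straightforward.
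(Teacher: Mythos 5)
Your proof is correct and is essentially the paper's argument spelled out: the paper simply asserts the lemma is a direct consequence of the Littlewood--Richardson rule, and your reduction to $t=2$, the observation that a subdiagram of a hook is a hook (no $2\times 2$ block), and the use of the symmetry $\mathcal{LR}(h;\mu,\nu)=\mathcal{LR}(h;\nu,\mu)$ to handle the second factor supply exactly the missing details. One caution: the ``alternative'' justification you sketch at the end (bounding the number of rows and columns of $\lambda_t$) would only place $\lambda_t$ inside a rectangle, not inside the hook, so the symmetry argument is the one to keep.
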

\begin{proof}
This is a direct consequence of the Littlewood-Richardson rule.
\end{proof}

\begin{lemma}\label{lem: diff}
Let $\alpha=(n_1,\ldots, n_t)\in\mathcal{C}(n)$ and $h=(n-x,1^x)\in \mathcal{H}(n)$. 
For each $i\in [1,t]$ let $h_i=(n_i -x_i, 1^{x_i})\in\mathcal{H}(n_i)$, for some $x_i\in [0,n_i-1]$. 
Finally, let $y=x-\sum_{i=1}^tx_i$. 
Then $$\mathcal{LR}(h; h_1,\ldots, h_t)= {t-1\choose y}.$$
\end{lemma}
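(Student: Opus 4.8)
The plan is to reinterpret the coefficient $\mathcal{LR}(h; h_1,\ldots,h_t)$ as a restriction multiplicity and to exploit the classical identification of the characters labelled by hooks with exterior powers of the natural module. Write $Y=\fS_{n_1}\times\cdots\times\fS_{n_t}$, and recall that for every $m\geq 1$ and every $0\leq r\leq m-1$ one has $\chi^{(m-r,1^r)}\cong\wedge^{r}U_m$, where $U_m$ denotes the natural $(m-1)$-dimensional module $\chi^{(m-1,1)}$ (with the convention $U_1=0$); this is well known, see e.g.\ \cite{JK}. In particular $\chi^{h}=\wedge^{x}U_n$. By Frobenius reciprocity,
\[
\mathcal{LR}(h; h_1,\ldots,h_t)=\big\langle\, \chi^{h}\down_{Y},\ \chi^{h_1}\times\cdots\times\chi^{h_t}\,\big\rangle,
\]
and the left-hand factor is $\wedge^{x}\big(U_n\down_{Y}\big)$, which I would compute explicitly.

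First I would decompose $U_n\down_Y$. Restricting the permutation module $\mathbb{C}^n$ to $Y$ gives $\mathbb{C}^{n_1}\oplus\cdots\oplus\mathbb{C}^{n_t}$, and since $\mathbb{C}^m\cong U_m\oplus\triv$ for every $m$, we obtain $U_n\down_Y\oplus\triv\cong\bigoplus_{i=1}^{t}U_{n_i}\oplus\triv^{\oplus t}$. Cancelling one trivial summand — which is legitimate, since no $U_{n_i}$ is isomorphic to $\triv$ as a $Y$-module — yields
\[
U_n\down_Y\ \cong\ \Big(\bigoplus_{i=1}^{t}U_{n_i}\Big)\ \oplus\ \triv^{\oplus(t-1)}.
\]
Next I would apply $\wedge^{x}$, using $\wedge^{x}(A\oplus B)=\bigoplus_{a+b=x}\wedge^{a}A\otimes\wedge^{b}B$ together with $\wedge^{b}\big(\triv^{\oplus(t-1)}\big)\cong\triv^{\oplus\binom{t-1}{b}}$ and $\wedge^{k}U_{m}=\chi^{(m-k,1^{k})}$ for $0\leq k\leq m-1$ (and $\wedge^{k}U_m=0$ for $k\geq m$). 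This expresses $\chi^{h}\down_Y$ as a sum of external products $\chi^{(n_1-k_1,1^{k_1})}\times\cdots\times\chi^{(n_t-k_t,1^{k_t})}$, running over $k_i\in[0,n_i-1]$, with the summand indexed by $(k_1,\ldots,k_t)$ occurring with multiplicity $\binom{t-1}{\,x-\sum_i k_i\,}$. Since distinct tuples $(k_1,\ldots,k_t)$ yield distinct irreducible characters of $Y$, the coefficient of $\chi^{h_1}\times\cdots\times\chi^{h_t}$ — that is, the summand with $k_i=x_i$ for all $i$ — is exactly $\binom{t-1}{\,x-\sum_i x_i\,}=\binom{t-1}{y}$, which is the claim.

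The step needing the most care — though I do not expect a genuine obstacle — is the boundary bookkeeping. One should check that $\chi^{(m-r,1^r)}=\wedge^{r}U_m$ holds over the whole range $0\leq r\leq m-1$ (at $r=0$ it reads $\triv=\triv$, and at $r=m-1$ it reads $\chi^{(1^m)}=\det U_m$); that the single trivial summand genuinely cancels; and that $\binom{t-1}{y}$ correctly returns $0$ when $y\notin[0,t-1]$ — which it does, matching the facts that no index $b\geq 0$ with $\sum_i x_i+b=x$ exists when $y<0$, and that $\wedge^{b}\big(\triv^{\oplus(t-1)}\big)=0$ when $b>t-1$. As a sanity check, for $t=1$ the formula returns $\mathcal{LR}(h;h)=\binom{0}{0}=1$.

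Finally, I would remark on an alternative route that stays closer to the combinatorial tools already in play: by Lemma~\ref{lem: hook-hooks} it suffices to treat $t=2$, where $\mathcal{LR}(h;h_1,h_2)=\binom{1}{y}$ follows by directly inspecting the Littlewood--Richardson tableaux of the relevant (near-hook) skew shape; one then induces on $t$, the binomials $\binom{t-1}{y}$ emerging from the convolution $\sum_{a+b=y}\binom{t-2}{a}\binom{1}{b}=\binom{t-1}{y}$ at each step. The exterior-power computation above is essentially a compact repackaging of this induction.
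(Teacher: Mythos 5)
Your proof is correct, but it takes a genuinely different route from the paper's. The paper argues by induction on $t$: for $t=2$ it reads off directly from the Littlewood--Richardson rule that $\mathcal{LR}(h;h_1,h_2)=\binom{1}{y}$, and for $t\geq 3$ it peels off $h_1$, observes that the only hooks $\theta$ of $n_2+\cdots+n_t$ with $\mathcal{LR}(h;h_1,\theta)\neq 0$ are the two with leg lengths $x-x_1$ and $x-x_1-1$ (each occurring with coefficient $1$), and concludes by the inductive hypothesis together with Pascal's rule $\binom{t-2}{y}+\binom{t-2}{y-1}=\binom{t-1}{y}$ --- which is precisely the ``alternative route'' you sketch in your last paragraph. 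Your main argument instead identifies $\chi^{(m-r,1^r)}$ with $\wedge^{r}U_m$, computes $U_n\down_Y\cong\bigl(\bigoplus_i U_{n_i}\bigr)\oplus\triv^{\oplus(t-1)}$ by cancelling a trivial summand from the restricted permutation module (legitimate by semisimplicity), and extracts the coefficient from $\wedge^{x}$ of this direct sum; all the boundary cases you flag ($k_i\geq n_i$ killing the term, $y\notin[0,t-1]$ giving $\binom{t-1}{y}=0$, the $r=0$ and $r=m-1$ ends of the hook--exterior-power dictionary) check out. The trade-off is that your computation delivers the full decomposition of $\chi^{h}\down_Y$ in one closed formula and makes the binomial coefficient conceptually transparent as $\dim\wedge^{y}\bigl(\triv^{\oplus(t-1)}\bigr)$, at the cost of importing the exterior-power realization of hook characters; the paper's induction stays entirely inside the combinatorial framework (the Littlewood--Richardson rule) that the rest of the article relies on and reuses its Lemma 4.1 implicitly. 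Either proof is complete and correct.
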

\begin{proof}
We proceed by induction on $t$. If $t=2$, by Littlewood-Richardson rule we have that $\mathcal{LR}(h;h_1,h_2)=0$, unless $x_1+x_2\in \{x-1,x\}\cap \mathbb{N}$. 
In such cases we have that $\mathcal{LR}(h;h_1,h_2)=1$. These facts agree with the desired statement as $t-1=1$. 
Let us now suppose that $t\geq 3$ and let $K_1=(n_2+\dots +n_t-(x-x_1), 1^{x-x_1}),\ K_2=(n_2+\dots +n_t -(x-x_1-1),1^{x-x_1-1}) \in \mathcal{H}(n_2+\dots +n_t)$. Using the Littlewood-Richardson rule we observe that $\mathcal{LR}(h;h_1,\theta)=0$, unless $\theta\in\{K_1,K_2\}$. Moreover, $\mathcal{LR}(h;h_1, K_1)=\mathcal{LR}(h;h_1, K_2)=1$. By inductive hypothesis we know that $ \mathcal{LR}(K_1;h_2,\dots ,h_t)={t-2 \choose y} $ and $\mathcal{LR}(K_2;h_2,\dots ,h_t)={t-2 \choose y-1}$. Then, we can conclude that \[\mathcal{LR}(h; h_1,\ldots, h_t)={t-2 \choose y}+{t-2 \choose y-1}={t-1\choose y}.\]
\end{proof}

We give an example of the computation of the Littlewood-Richardson coefficient $\mathcal{LR}(\lambda; \mu, \nu)$ in the case $\lambda \in \mathcal{H}(2^n)$ and $\mu, \nu \in \mathcal{H}(2^{n-1})$. This case will be repeatedly used in the following section.
\begin{example}
	Let $n>1$ and $\lambda=(2^n-x,1^x)\in \mathcal{H}(2^n)$, where $x\in [1,2^n]$.
	We would like to restrict $\chi^\lambda$ to $Y:=\fS_{2^{n-1}} \times \fS_{2^{n-1}}$. 
	Using the Littlewood-Richardson rule we find the following decomposition:
	\[\chi^\lambda \down_{Y} = \chi^{(2^{n-1}-x,1^x)} \times \chi^{(2^{n-1})} + \sum_{y=0}^{x-1} \chi^{(2^{n-1}-y, 1^y)} \times \left(\chi^{(2^{n-1}-(x-y),1^{x-y})} + \chi^{(2^{n-1}-(x-y-1),1^{x-y-1})}\right).\]
	Thus we can compute the Littlewood-Richardson coefficient $\mathcal{LR}(\lambda; \mu, \nu)$. 
	Let $\mu=(2^{n-1}-t,1^t)$ and $\nu=(2^{n-1}-z,1^z)$.
	Define $V:=\Set{(x,0)}\cup \Set{(y,x-y),(y,x-y-1) \mid y\in[0,x-1] }$.
	Then \[\mathcal{LR}(\lambda; \mu, \nu) = \begin{cases}
		1, &\mbox{ if }(t,z)\in V;
		\\0, &\mbox{ otherwise.}
	\end{cases}\] 
	

	Notice that if $x=0$, $\lambda=(2^n)$ and $\chi^\lambda \down_{Y} = \chi^{(2^{n-1})}  \times \chi^{(2^{n-1})} $.
\end{example}

Let $n\in\mathbb{N}$ and let $n=2^{k_1}+\cdots+2^{k_t}$ be its binary expansion. Given $\phi\in\mathrm{Lin}(P_n)$, there exist unique $(h_1,\ldots, h_t)\in \mathcal{H}(n_1)\times\mathcal{H}(n_2)\times \cdots\times \mathcal{H}(n_t)$ such that $\phi=L_{h_1}\times\cdots\times L_{h_t}$. Here for all $i\in [1,t]$, $L_{h_i}$ denotes the only linear constituent of $\chi^{h_i} \down_{P_{2^{k_i}}}$, as described by Theorems \ref{thm: GMcKay} and \ref{lem: unique_linear}. 
In this case, we denote $\phi$ by $\phi(h_1,\ldots, h_t)$.

\begin{theorem}\label{thm: 4.3}
Let $n\in\mathbb{N}$ and let $k_1>\cdots >k_t\geq 0$ be such that $n=2^{k_1}+\cdots+2^{k_t}$. Let $x\in [0,n-1]$ and let
$h=(n-x,1^x)\in\mathcal{H}(n)$. For each $i\in [1,t]$ let $x_i\in [0, 2^{k_i}-1]$ and set $h_i=(2^{k_i}-x_i,1^{x_i})$. Finally let $y=x-\sum_{i=1}^tx_i$. Then $$[\chi^h \down_{P_n}, \phi(h_1,\ldots, h_t)]={t-1\choose y}.$$
\end{theorem}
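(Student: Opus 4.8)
The plan is to reduce the computation of the Sylow branching coefficient $[\chi^h \down_{P_n}, \phi(h_1,\ldots,h_t)]$ to a purely combinatorial statement about Littlewood–Richardson coefficients, namely Lemma~\ref{lem: diff}, by carefully tracking which partitions can possibly contribute when we restrict $\chi^h$ through the chain of subgroups $\fS_n \geq \fS_{(2^{k_1},\ldots,2^{k_t})} \geq P_{2^{k_1}}\times\cdots\times P_{2^{k_t}} = P_n$. The key observation is that restriction in two stages gives
\[ [\chi^h \down_{P_n}, \phi(h_1,\ldots,h_t)] = \sum_{\mu_1,\ldots,\mu_t} \mathcal{LR}(h;\mu_1,\ldots,\mu_t)\cdot \prod_{i=1}^t [\chi^{\mu_i}\down_{P_{2^{k_i}}}, L_{h_i}], \]
where $\mu_i$ ranges over $\mathcal{P}(2^{k_i})$.

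First I would invoke Lemma~\ref{lem: hook-hooks}: since $h$ is a hook, every nonzero term forces each $\mu_i$ to be a hook $\mu_i = (2^{k_i}-z_i,1^{z_i})\in\mathcal{H}(2^{k_i})$. Next, for each such hook $\mu_i$, Theorem~\ref{thm: GMcKay} tells us that $\chi^{\mu_i}\down_{P_{2^{k_i}}}$ has a \emph{unique} linear constituent, appearing with multiplicity one; and Theorem~\ref{lem: unique_linear} identifies it explicitly via the $\{0,1\}$-sequence attached to the binary expansion of $z_i$. The factor $[\chi^{\mu_i}\down_{P_{2^{k_i}}}, L_{h_i}]$ is therefore $1$ if that unique linear constituent equals $L_{h_i}$ and $0$ otherwise. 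The heart of the argument is to show that the map sending a hook partition $(2^{k_i}-z,1^z)$ of $2^{k_i}$ to the associated $\{0,1\}$-sequence of Theorem~\ref{lem: unique_linear} is injective — equivalently, that $z\mapsto([a_{k_i}+a_{k_i-1}],\ldots,[a_1+a_0])$ is a bijection from $[0,2^{k_i}-1]$ onto $\{0,1\}^{k_i}$, where $z = \sum a_j 2^j$. This is a standard fact (it is essentially the change of basis $z \mapsto z \oplus 2z$ on binary strings, which is invertible), so $[\chi^{\mu_i}\down_{P_{2^{k_i}}}, L_{h_i}] = \delta_{\mu_i, h_i}$: the only hook whose unique linear constituent is $L_{h_i}$ is $h_i$ itself.

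Consequently the double sum collapses: only the single term $\mu_1 = h_1,\ldots,\mu_t = h_t$ survives, giving
\[ [\chi^h \down_{P_n}, \phi(h_1,\ldots,h_t)] = \mathcal{LR}(h; h_1,\ldots,h_t). \]
By Lemma~\ref{lem: diff}, with $y = x - \sum_{i=1}^t x_i$, this equals $\binom{t-1}{y}$, which is exactly the claim (and it is automatically $0$ when $y\notin[0,t-1]$, consistent with the binomial convention fixed before the theorem).

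The main obstacle I anticipate is the injectivity claim in the middle paragraph: one must be careful that the $\{0,1\}$-sequences produced by Theorem~\ref{lem: unique_linear} for the various hooks $(2^{k_i}-z,1^z)$, $z\in[0,2^{k_i}-1]$, are pairwise distinct, so that matching $L_{h_i}$ pins down $\mu_i = h_i$ uniquely. Everything else is bookkeeping: the two-step restriction formula is just transitivity of restriction plus Frobenius reciprocity, Lemma~\ref{lem: hook-hooks} handles the hook constraint, and Lemma~\ref{lem: diff} supplies the final count. A secondary point worth stating carefully is that the factorization $P_n \cong P_{2^{k_1}}\times\cdots\times P_{2^{k_t}}$ and the corresponding factorization of $\phi(h_1,\ldots,h_t)$ are compatible with the Young subgroup $\fS_{(2^{k_1},\ldots,2^{k_t})}$, so that the intermediate restriction step is legitimate.
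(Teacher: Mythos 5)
Your proposal is correct and follows essentially the same route as the paper: restrict to the Young subgroup $\fS_{(2^{k_1},\ldots,2^{k_t})}$, use Lemma \ref{lem: hook-hooks} to reduce to hook constituents, use Theorem \ref{thm: GMcKay} (together with the injectivity of $h_i\mapsto L_{h_i}$, which the paper takes as known from the McKay bijection of \cite{GJLMS} and which you correctly verify via Theorem \ref{lem: unique_linear}) to collapse the sum to the single term $(h_1,\ldots,h_t)$, and finish with Lemma \ref{lem: diff}. No gaps.
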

\begin{proof}
For each $i\in [1,t]$ we let $n_i=2^{k_i}$. Set $\alpha=(n_1,\ldots, n_t)\in\mathcal{P}(n)$ and let $Y=\fS_\alpha$ be such that $P_n\leq Y$. 
Let $\mathcal{H}=\mathcal{H}(n_1)\times\mathcal{H}(n_2)\times \cdots\times \mathcal{H}(n_t)$ and $\phi= \phi(h_1,\ldots, h_t)$. 
From Lemma \ref{lem: hook-hooks} we know that $$(\chi^h) \down_Y=\sum_{(\lambda_1,\ldots, \lambda_t)\in\mathcal{H}}\mathcal{LR}(h; \lambda_1,\ldots,\lambda_t)(\chi^{\lambda_1}\times\cdots\times\chi^{\lambda_t}).$$
By Theorem \ref{thm: GMcKay}, $[(\chi^{\lambda_1}\times\cdots\times\chi^{\lambda_t}) \down_{P_n}, \phi]=0$ unless $(\lambda_1,\ldots, \lambda_t)=(h_1,\ldots, h_t)$. In the latter case we have $[(\chi^{h_1}\times\cdots\times\chi^{h_t}) \down_{P_n}, \phi]=1$. 
These observations used together with Lemma \ref{lem: diff} imply that $$[(\chi^h) \down _{P_n}, \phi]=[((\chi^h) \down_Y) \down_{P_n}, \phi]=\mathcal{LR}(h; h_1,\ldots, h_t)={t-1\choose y}.$$
\end{proof}

This theorem says in particular that for every natural number $n$ and every hook partition $h \in \mathcal{H}(n)$, the restriction $\chi^h \down_{P_n}$ has a linear constituent.
This was also \cite[Theorem 3.1]{GN}.

We conclude this section with a lemma that will prove to be useful in the second part of the paper. 
\begin{lemma}\label{lem: hook-1}
	Let $n\in \mathbb{N}$, $\lambda =\left(2^n-1,1\right)$. Then there exist characters $\theta_k \in \Irr(P_{2^n})$ for $k \in [0,n-1]$ such that $\theta_k(1)=2^k$ for each $k$ and $\chi^\lambda \down_{P_n}= \sum_{k=0}^{n-1} \theta_k$.
\end{lemma}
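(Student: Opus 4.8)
The partition $\lambda = (2^n-1, 1)$ is the hook with $x=1$, so I would exploit the recursive structure $P_{2^n} = P_{2^{n-1}} \wr P_2$ together with the branching rule from the Example above. Restricting to $Y = \fS_{2^{n-1}} \times \fS_{2^{n-1}}$, the Littlewood--Richardson computation (with $x=1$, $V = \{(1,0),(0,1),(0,0)\}$) gives
\[
\chi^{(2^n-1,1)}\down_Y = \chi^{(2^{n-1}-1,1)}\times\chi^{(2^{n-1})} + \chi^{(2^{n-1})}\times\chi^{(2^{n-1}-1,1)} + \chi^{(2^{n-1})}\times\chi^{(2^{n-1})}.
\]
The first two summands together form $\big(\chi^{(2^{n-1}-1,1)}\times\chi^{(2^{n-1})}\big)\up^{\fS_{2^n}}\!\!\down_Y$-type data that induces to the wreath product as a character of the form $\big(\chi^{(2^{n-1}-1,1)} \times \triv\big)\up^{P_{2^{n-1}}\wr P_2}$, while the third summand is $\mathcal{X}(\chi^{(2^{n-1})}\down_{P_{2^{n-1}}}; \psi)$ summed over $\psi \in \Irr(P_2)$ by Gallagher.

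**Key steps.** First I would set up the induction on $n$, with base case $n=1$ where $\chi^{(1^2)}\down_{P_2} = \phi_1$ has the single constituent $\theta_0$ of degree $1$. For the inductive step, write $\chi^{(2^n-1,1)}\down_{P_{2^n}} = \big(\chi^{(2^n-1,1)}\down_Y\big)\down_{P_{2^n}}$ and analyze the three pieces. The mixed pieces $\chi^{(2^{n-1}-1,1)}\times\chi^{(2^{n-1})}$ and its transpose: since $\chi^{(2^{n-1})}\down_{P_{2^{n-1}}} = \triv$, restricting to the base group $P_{2^{n-1}}^{\times 2}$ gives $\big(\chi^{(2^{n-1}-1,1)}\down_{P_{2^{n-1}}}\big)\times\triv$ and its transpose, which are not $P_2$-invariant, so by case (i) of the wreath-product character classification each irreducible constituent $\eta$ of $\chi^{(2^{n-1}-1,1)}\down_{P_{2^{n-1}}}$ contributes a single irreducible character $(\eta\times\triv)\up^{P_{2^{n-1}}\wr P_2}$ of degree $2\eta(1)$ to $\chi^{(2^n-1,1)}\down_{P_{2^n}}$. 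By induction $\chi^{(2^{n-1}-1,1)}\down_{P_{2^{n-1}}} = \sum_{k=0}^{n-2}\theta_k$ with $\theta_k(1) = 2^k$, so this part contributes characters of degrees $2^1, 2^2, \ldots, 2^{n-1}$. The third piece $\chi^{(2^{n-1})}\times\chi^{(2^{n-1})}$ restricts to $\triv$ on the base group; by Gallagher its contribution to $\Irr(P_{2^{n-1}}\wr P_2 \mid \triv^{\times 2})$ decomposes as $\mathcal{X}(\triv; \phi_0) + \mathcal{X}(\triv; \phi_1)$, i.e. two linear characters — but I only want one degree-$2^0$ constituent, so I must be careful: in $\chi^{(2^n-1,1)}\down_Y$ I need exactly the constituents that land in $P_{2^n}$, and the two linear characters $\mathcal{X}(\triv;\phi_0), \mathcal{X}(\triv;\phi_1)$ of $P_{2^n}$ are distinct. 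This would give $2 + (n-1) = n+1$ constituents, one too many, so the correct reading must combine the linear constituents with the degree-$2^1$ piece differently.

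**Main obstacle.** The delicate point is the bookkeeping of which wreath-product irreducibles actually occur and with what multiplicity: the total dimension must be $\chi^{(2^n-1,1)}(1) = 2^n - 1 = 1 + 2 + 4 + \cdots + 2^{n-1}$, which already tells me that the answer is $\sum_{k=0}^{n-1}\theta_k$ with $\theta_k(1)=2^k$ and these are the \emph{only} constituents (no multiplicities, and the sum is forced). So the cleanest route is: (a) establish by the degree count above that $\chi^{(2^n-1,1)}\down_{P_{2^n}}$ has dimension $2^n - 1$; (b) from the branching analysis show every irreducible constituent has $2$-power degree at most $2^{n-1}$ (using that constituents of restrictions to $P_{2^{n-1}}\wr P_2$ of characters trivial or near-trivial on the base group have controlled degrees via cases (i), (ii)); (c) show a constituent of each degree $2^0, \ldots, 2^{n-1}$ appears — the degrees $2^1,\ldots,2^{n-1}$ from the mixed pieces and induction, and degree $2^0$ since $\chi^{(2^n-1,1)}\down_{P_n}$ contains a linear constituent by Theorem~\ref{thm: 4.3} (with $t=1$, $y=0$, giving multiplicity $\binom{0}{0}=1$); (d) conclude by the dimension identity that these are all of them and each occurs once. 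The hard part is step (b)–(d) made rigorous — in particular ruling out any constituent of degree exceeding $2^{n-1}$ and pinning down that the linear part contributes exactly one (not two) linear characters once we correctly intersect the $Y$-decomposition with $\Irr(P_{2^n})$; I expect the transpose-pair structure of the two mixed summands to be what forces the linear multiplicity down to $1$ after the full induction to $P_{2^n}$ rather than just to the base group.
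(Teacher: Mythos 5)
Your final plan is correct and follows essentially the same route as the paper: both arguments restrict $\chi^{(2^n-1,1)}$ to $\fS_{2^{n-1}}\times\fS_{2^{n-1}}$, obtain the same three Littlewood--Richardson summands, and induct on $n$. The difference lies in the concluding bookkeeping. The paper computes the restriction to the base group $B=P_{2^{n-1}}\times P_{2^{n-1}}$ exactly, namely $(\triv\times\triv)+\sum_{i=0}^{n-2}\left(\triv\times\psi_i+\psi_i\times\triv\right)$, and reads off the decomposition over $P_{2^n}$ directly: since $[\chi^\lambda\down_B,\triv\times\triv]=1$ and each of the two linear characters of $P_{2^n}$ lying over $\triv\times\triv$ restricts to it with multiplicity one, exactly one of them occurs, and by Theorem \ref{thm: GMcKay} it is $L_\lambda$; each remaining pair $\triv\times\psi_i$, $\psi_i\times\triv$ lies under the single irreducible character $(\triv\times\psi_i)\up^{P_{2^n}}$, of degree $2^{i+1}$. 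This resolves the worry in your middle paragraph: Gallagher does not force both $\mathcal{X}(\triv;\phi_0)$ and $\mathcal{X}(\triv;\phi_1)$ to appear — the multiplicity of $\triv\times\triv$ in $\chi^\lambda\down_B$ is $1$, so exactly one appears, once. Your alternative ending, steps (a), (c), (d) — exhibit one constituent of each degree $2^0,\dots,2^{n-1}$ and invoke $\sum_{k=0}^{n-1}2^k=2^n-1=\chi^\lambda(1)$ — is also valid and makes your step (b) superfluous, since the degree identity already rules out any further constituents and forces all multiplicities to be $1$. The one point you must still make explicit is that $(\psi_0\times\triv)\up^{P_{2^n}}$ is irreducible (giving the degree-$2$ constituent) only because $\psi_0\neq\triv$, i.e.\ the unique linear constituent of $\chi^{(2^{n-1}-1,1)}\down_{P_{2^{n-1}}}$ is nontrivial; this follows from Theorem \ref{lem: unique_linear}, whose label for this hook is $\mathcal{X}(0,\dots,0,1)$, and the paper records precisely this fact ($\psi_0\neq L_{(2^{n-1})}=1_{P_{2^{n-1}}}$) in its proof.
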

\begin{proof}
	We proceed by induction on $n$. If $n=1$ then $\lambda=(1^2)$ and $\chi^\lambda \down_{P_2} = \phi_1$, as desired. 
	If $n\geq 2$, by the Littlewood-Richardson rule we have 
	\[\chi^{\lambda} \down_{\fS_{2^{n-1}} \times \fS_{2^{n-1}}} = \chi^{(2^{n-1})} \times \chi^{(2^{n-1})} + \chi^{(2^{n-1})} \times \chi^{(2^{n-1}-1,1)} + \chi^{(2^{n-1}-1,1)} \times \chi^{(2^{n-1})}. \]
	However, $\chi^{(2^{n-1})} \down_{P_{2^{n-1}}} = 1_{P_{2^{n-1}}}$ and by inductive hypothesis, $\chi^{(2^{n-1}-1,1)} \down_{P_{2^{n-1}}} = \sum_{i=0}^{n-2} \psi_i$, where $\psi_i \in \Irr(P_{2^{n-1}})$ and $\psi_i(1)=2^i$. Notice also that $\psi_0\neq L_{(2^{n-1})}=1_{P_{2^{n-1}}}$. 
	Hence 
	\[\chi^\lambda \down_{P_{2^{n-1}} \times P_{2^{n-1}}} = \left( 1_{P_{2^{n-1}}} \times 1_{P_{2^{n-1}}} \right) + \sum_{i=0}^{n-2} \left( 1_{P_{2^{n-1}}} \times \psi_i + \psi_i \times 1_{P_{2^{n-1}}} \right) .\] 
	Therefore $\chi^\lambda \down_{P_{2^n}} = L_\lambda + \sum_{i=0}^{n-2} \left( 1_{P_{2^{n-1}}} \times \psi_i \right) \up^{P_{2^n}} $. The proof is then concluded by observing that $L_\lambda(1)=2^0$ and that $\left( 1_{P_{2^{n-1}}} \times \psi_i \right) \up^{P_{2^n}} (1)= 2^{i+1}$ for every $i\in [0, n-2]$.
\end{proof}

\section{On non-linear Sylow Branching Coefficients}\label{S: 5}

In Sections \ref{sec: 3} and \ref{S: 4} we completely described the linear constituents of the Sylow restriction of irreducible characters labelled by hook partitions. 
The aim of this section is to continue our investigation by focusing on irreducible constituents of higher degree. 
More precisely for any $k\in\mathbb{N}$ such that $2^k\in \cd(P_{n})$, we will devote our attention to study the structure of the set $\mathcal{H}_n^k$ defined as follows:  $$\mathcal{H}_n^k=\Set{\lambda \in \mathcal{H}(n) | \chi^\lambda \down_{P_{n}} \mbox{has an irreducible constituent of degree } 2^k }.$$
A first surprising result is stated in Theorem \ref{thm: inclusion}, where we show that $\mathcal{H}_n^k\subseteq \mathcal{H}_n^\ell$, whenever $\ell\leq k$. 
Then, in Theorem \ref{thm: generic_case} we prove that these sets have a very regular structure. More precisely, we show that for any $k\in\mathbb{N}$ as above, there exists a value $t\leq n$ such that $\mathcal{H}_n^k=\mathcal{B}_{n}(t) \cap \mathcal{H}(n)$.

In order to ease the notation, from now on given $n, t\in\mathbb{N}$ such that $t\leq n$
we will let $\bar{\mathcal{B}}_{n}(t)$ be the subset of $\mathcal{H}(n)$ defined by $$\bar{\mathcal{B}}_{n}(t) := \mathcal{B}_{n}(t) \cap \mathcal{H}(n).$$
It is important (and easy) to observe that the sets $\mathcal{H}_n^k$ and $\bar{\mathcal{B}}_{n}(t)$ are closed under conjugation of partitions.

%
%

We start the section with a short example. On one hand this will help the reader understand the behavior of the sets $\mathcal{H}_n^k$ for small values of $n$. On the other hand this will serve as base case of some of the later induction arguments. 

\begin{example}\label{ex: n=1,2,3}
Here we compute $\mathcal{H}_n^k$ whenever $n$ is a small power of $2$. More precisely we will just restrict our attention to the cases $n\in\{2,4,8\}$. 

If $n=2$ then there is not much to say as $\fS_2=P_2$. We just recall the notation introduced in Section \ref{sec: wr} and write $\Irr(P_2)=\Set{\phi_0, \phi_1}$ where $\phi_0$ is the trivial character of $P_2$. 

Let now $n=4$. Then $P_4\cong C_2 \wr C_2$ admits four linear characters $\mathcal{X}(i,j)$, for $i,j\in\{0,1\}$ and a unique irreducible character $\varphi$ of degree $2$. In particular, $\varphi=\left(\phi_0\times \phi_1\right)\up^{P_4}$. It follows that $\mathrm{cd}(P_4)=\{1,2\}$, hence we will be interested in computing the sets $\mathcal{H}_4^0$ and $\mathcal{H}_4^1$. In order to do this we are going to study the restriction to $P_4$ of those irreducible characters of $\fS_4$ that are labelled by partitions contained in the set $\mathcal{H}(4)=\{(4),(3,1),(2,1^2),(1^4)\}$. It is not difficult to see that: 
$$\chi^{(4)} \down_{P_4} = 1_{P_4}= \mathcal{X}(0,0)\ \text{and}\ \ \chi^{(3,1)}\down_{P_4}=\mathcal{X}(0,1) + \varphi.$$
Since the sets $\mathcal{H}_4^0$ and $\mathcal{H}_4^1$ are closed under conjugation of partitions, we conclude that 
$$\mathcal{H}_4^0=\mathcal{H}(4)=\bar{\mathcal{B}}_4(4),\ \text{and}\ \mathcal{H}_4^1=\{(3,1),(2,1^2)\}=\bar{\mathcal{B}}_4(3)$$

Finally, let us consider the case where $n=8$. Here $P_8=(P_4\times P_4)\rtimes P_2\cong P_4\wr P_2$. Since the base group $P_4\times P_4$ is naturally a subgroup of an appropriately chosen Young subgroup $Y\cong \fS_4\times \fS_4$ of $\fS_8$, our strategy is to restrict irreducible characters of $\fS_8$ to $Y$, to inductively deduce information on their restriction to $P_4\times P_4$ and finally to obtain results on their restriction to $P_8$. Consider for instance $\lambda=(6,1^2)$ and let $\chi=\chi^\lambda$. Using the Littlewood-Richardson rule we know that $\chi^{(4)}\times \chi^{(2,1^2)}$ is an irreducible constituent of $\chi\down_{Y}$. Using this, together with the calculations we have done for the case $n=4$ (inductive step) we deduce that $\mathcal{X}(0,0)\times \varphi$ is an irreducible constituent of  $\chi\down_{P_4\times P_4}$. We conclude that $(\mathcal{X}(0,0)\times \varphi)\up^{P_8}$ is an irreducible constituent (of degree $4$) of $\chi\down_{P_8}$. This shows that $(6,1^2)\in \mathcal{H}_8^2$. With completely similar arguments we obtain that 
$$\mathcal{H}_8^0=\bB_8(8)\ \ \text{and that}\ \mathcal{H}_8^1=\mathcal{H}_8^2=\bB_8(7).$$
This is all we needed to compute since $\mathrm{cd}(P_8)=\{1,2,4\}$. 
We conclude by mentioning that $\chi^{(6,1^2)} \down_{P_8}$ has a unique linear constituent, two distinct irreducible constituents of degree $2$ and three distinct irreducible constituents of degree $4$. On the other hand, $\chi^{(5,1^3)} \down_{P_8}$ has a unique linear constituent, three distinct irreducible constituents of degree $2$ and five distinct irreducible constituents of degree $4$. This can be easily verified using the strategy outlined above in the case of $\lambda=(6,1^2)$. We leave the explicit calculations to the interested reader. 
\end{example}

From now on we will denote by $\alpha_n$ the maximal integer $k$ such that $2^k$ is the degree of an irreducible character of $P_n$. This is formally defined and explained in the following Definition \ref{def: alphan} and Proposition \ref{prop: cd_n}. 

\begin{definition}\label{def: alphan}
For any natural number $t$ we define the integer $\alpha_{2^t}$ as follows. We set $$\alpha_1=\alpha_2=0,\ \ \alpha_4=1\ \text{and}\ \alpha_{2^t}=2^{t-2}+2^{t-3}-1\mbox{ for every }t\geq 3.$$ Notice in particular that $\alpha_{2^t}=2\alpha_{2^{t-1}}+1$, for any $t\geq 4$. 
Let now $n\in \mathbb{N}$, and $n=\sum_{i=1}^r 2^{n_i}$ be its binary expansion.
We set $\alpha_n\in\mathbb{N}$ to be defined as $\alpha_n := \sum_{i=1}^r \alpha_{2^{n_i}}$. 
\end{definition}

\begin{proposition} \label{prop: cd_n}
	
	Given $n\in \mathbb{N}$, we have that $\cd(P_{n})= \Set{2^j|j\in [0,\alpha_n]}$. 
	Moreover, if $n\geq 8$, then $|\{\theta\in\mathrm{Irr}(P_n)\ |\ \theta(1)=2^{\alpha_{n}}\}|\geq 3$.
\end{proposition}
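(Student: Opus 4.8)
The statement has two parts: first, a description of the degree set $\cd(P_n)$ as a full interval of powers of $2$ up to $2^{\alpha_n}$; second, a multiplicity bound ($\geq 3$) at the top degree when $n\geq 8$. My plan is to treat the prime-power case $n = 2^t$ first by induction on $t$, and then to bootstrap to general $n$ via the direct-product decomposition $P_n \cong P_{2^{n_1}} \times \cdots \times P_{2^{n_r}}$.

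For $n = 2^t$: the base cases $t \leq 3$ are finite checks (and $n \in \{2,4,8\}$ was essentially handled in Example \ref{ex: n=1,2,3}, where $\cd(P_2) = \{1\}$, $\cd(P_4) = \{1,2\}$, $\cd(P_8) = \{1,2,4\}$ — note $\alpha_8 = 2^1 + 2^0 - 1 = 2$). For the inductive step, I would use the description of $\Irr(G \wr C_2)$ recalled in Section \ref{sec: wr} with $G = P_{2^{t-1}}$: every $\psi \in \Irr(P_{2^t})$ is either of type (i), namely $(\phi_1 \times \phi_2)\up^{P_{2^t}}$ with $\phi_1 \neq \phi_2 \in \Irr(P_{2^{t-1}})$, having degree $2\phi_1(1)\phi_2(1)$, or of type (ii), namely $\lX(\phi;\theta)$ with $\phi \in \Irr(P_{2^{t-1}})$ and $\theta \in \Irr(C_2)$, having degree $\phi(1)^2$. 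By the inductive hypothesis $\cd(P_{2^{t-1}}) = \{2^j : 0 \leq j \leq \alpha_{2^{t-1}}\}$. The maximum degree among type (i) characters is $2 \cdot 2^{\alpha_{2^{t-1}}} \cdot 2^{\alpha_{2^{t-1}}-1}$ (take $\phi_1(1) = 2^{\alpha_{2^{t-1}}}$, $\phi_2(1) = 2^{\alpha_{2^{t-1}}-1}$, which are distinct since $\alpha_{2^{t-1}} \geq 1$ for $t \geq 3$), giving exponent $2\alpha_{2^{t-1}}$, while type (ii) gives at most $2\alpha_{2^{t-1}}$ as well; and indeed $\alpha_{2^t} = 2\alpha_{2^{t-1}}+1$ for $t \geq 4$ — so I should double-check the arithmetic here, as it looks like the maximal exponent is $2\alpha_{2^{t-1}}+1$ only if I can realize $2 \cdot 2^a \cdot 2^a = 2^{2a+1}$ via a type-(i) character, which requires two distinct irreducibles of degree $2^{\alpha_{2^{t-1}}}$ in $P_{2^{t-1}}$; this is exactly where the "$\geq 3$" part of the statement (applied one level down) feeds in. So the two parts of the proposition must be proved together in a single induction: the multiplicity bound at level $t-1$ guarantees the existence of the distinct pair needed to build a character of the claimed top degree at level $t$. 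For surjectivity onto the whole interval, I would note that multiplying a top-degree character down by smaller ones (type (ii) with $\phi$ ranging over all degrees, or type (i) with controlled factors) fills in every intermediate power of $2$; this is routine once the endpoints are pinned down.

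For the multiplicity bound at level $t$: I need at least three distinct $\psi \in \Irr(P_{2^t})$ of degree $2^{\alpha_{2^t}}$. Using type (i) characters $(\phi_1 \times \phi_2)\up^{P_{2^t}}$ with $\{\phi_1(1),\phi_2(1)\} = \{2^{\alpha_{2^{t-1}}}, 2^{\alpha_{2^{t-1}}-1}\}$: by the inductive multiplicity bound there are $\geq 3$ choices for the degree-$2^{\alpha_{2^{t-1}}}$ factor and at least one (in fact one can check $\geq 1$, likely more) for the degree-$2^{\alpha_{2^{t-1}}-1}$ factor, and distinct unordered pairs $\{\phi_1,\phi_2\}$ yield distinct induced characters (recall from Section \ref{sec: wr} that in case (i), $\Irr(P_{2^t} \mid \phi_1 \times \phi_2) = \{\psi\}$, so the induced character determines the pair up to the $C_2$-action). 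Counting unordered pairs with one entry from a set of size $\geq 3$ and the other from a set of size $\geq 1$ already gives $\geq 3$ distinct characters. One must also be slightly careful about the low base case $t = 3$ (where $\alpha_4 - 1 = 0$, so the smaller factor is the trivial character, of which there is exactly one), but $|\{\theta \in \Irr(P_8) : \theta(1) = 4\}| \geq 3$ can be verified directly as in Example \ref{ex: n=1,2,3}.

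Finally, for general $n = \sum_{i=1}^r 2^{n_i}$: since $P_n$ is the direct product of the $P_{2^{n_i}}$, we have $\Irr(P_n) = \{\theta_1 \times \cdots \times \theta_r : \theta_i \in \Irr(P_{2^{n_i}})\}$ and $\cd(P_n)$ is the set of products of elements of the $\cd(P_{2^{n_i}})$; since each factor is an interval of powers of $2$ containing $1$, so is the product, with top exponent $\sum_i \alpha_{2^{n_i}} = \alpha_n$, giving the first claim. For the second claim when $n \geq 8$: some $n_i \geq 3$ (if the largest part is $2^{n_1} \geq 8$) or $n$ is a sum of parts $\leq 4$ with $n \geq 8$ (so at least two parts, hence a factor of multiplicity $\geq 1$ times... ) — actually the cleanest route is: pick the index $i_0$ with $n_{i_0} = n_1$ maximal; if $n_1 \geq 3$ we get $\geq 3$ characters of top degree in that factor from the prime-power case and take products with (the unique) top-degree character in every other factor; if $n_1 \leq 2$ then $n \in \{2,3,4,5,6,7\}$ are all $< 8$ except we must have $n_1 \geq 3$ once $n \geq 8$, so this case is vacuous. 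Multiplying each of the $\geq 3$ top-degree characters of $P_{2^{n_{i_0}}}$ by a fixed top-degree character in each remaining factor yields $\geq 3$ distinct irreducible characters of $P_n$ of degree $2^{\alpha_n}$.

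The main obstacle I anticipate is the coupled induction: the degree-set statement at level $t$ genuinely needs the multiplicity statement at level $t-1$ (to produce a type-(i) character of the claimed maximal degree $2^{2\alpha_{2^{t-1}}+1}$ from two \emph{distinct} irreducibles of the same maximal degree), so the cleanest writeup proves both statements simultaneously by induction on $t$, with the bookkeeping of base cases $t = 3$ (and the degenerate smaller factor when $\alpha_{2^{t-1}} = 1$) being the fiddly part.
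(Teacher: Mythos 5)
Your overall strategy --- a coupled induction on $t$ for $n=2^t$ using the two types of irreducible characters of $P_{2^{t-1}}\wr C_2$, followed by the direct-product reduction for general $n$ --- is exactly the paper's, and you correctly identify the crucial coupling: realizing the top degree $2^{\alpha_{2^t}}=2^{2\alpha_{2^{t-1}}+1}$ forces a type-(i) character $(\phi_1\times\phi_2)\up^{P_{2^t}}$ with $\phi_1\neq\phi_2$ both of degree $2^{\alpha_{2^{t-1}}}$, which is supplied by the multiplicity bound one level down.

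However, your multiplicity step at level $t$ fails as written. The characters $(\phi_1\times\phi_2)\up^{P_{2^t}}$ with $\{\phi_1(1),\phi_2(1)\}=\{2^{\alpha_{2^{t-1}}},2^{\alpha_{2^{t-1}}-1}\}$ have degree $2\cdot 2^{\alpha_{2^{t-1}}}\cdot 2^{\alpha_{2^{t-1}}-1}=2^{2\alpha_{2^{t-1}}}=2^{\alpha_{2^t}-1}$, one power of $2$ short of the top degree. Indeed, since each factor has degree at most $2^{\alpha_{2^{t-1}}}$ and type-(ii) characters have degree $\phi(1)^2\leq 2^{2\alpha_{2^{t-1}}}<2^{\alpha_{2^t}}$, the \emph{only} irreducible characters of $P_{2^t}$ of degree $2^{\alpha_{2^t}}$ are the $(\phi_1\times\phi_2)\up^{P_{2^t}}$ with $\phi_1\neq\phi_2$ and $\phi_1(1)=\phi_2(1)=2^{\alpha_{2^{t-1}}}$. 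So your count of unordered pairs with mismatched degrees counts characters of the wrong degree, and the inductive multiplicity bound is not established --- which, because the induction is coupled, also undermines the degree-set claim at level $t+1$. The repair is the one your own earlier analysis points to, and is what the paper does: take three distinct $\psi_1,\psi_2,\psi_3\in\Irr(P_{2^{t-1}})$ of degree $2^{\alpha_{2^{t-1}}}$ (inductive hypothesis, valid since $2^{t-1}\geq 8$ for $t\geq 4$) and observe that $(\psi_1\times\psi_2)\up^{P_{2^t}}$, $(\psi_2\times\psi_3)\up^{P_{2^t}}$ and $(\psi_1\times\psi_3)\up^{P_{2^t}}$ are three distinct irreducibles of degree $2^{\alpha_{2^t}}$. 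The rest of your plan is sound and matches the paper: the interval-filling step you leave as ``routine'' is carried out there via the parity split $k=2q+r$ (note the case $r=1$, $q=0$ needs two distinct linear characters rather than factors of degrees $2^{q\pm 1}$), and your product argument for general $n\geq 8$ using $n_1\geq 3$ is correct.
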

\begin{proof}
	Suppose first of all that $n=2^t$ is a power of $2$. We proceed by induction on $t$. If $t\in \lbrace 1,2,3 \rbrace$, the proposition holds as we can see in Example \ref{ex: n=1,2,3}.
	Let us now fix $t\geq 4$ and recall that $\alpha_{2^t}=2\alpha_{2^{t-1}}+1$. Let $k\in \left[0, \alpha_{2^t}-1 \right]$ and let $q \in [0, \alpha_{2^{t-1}} ]$, $r\in \lbrace 0,1 \rbrace$ be such that $k=2q+r$.
Suppose first that $r=0$, by inductive hypothesis there exists $\phi\in \Irr(P_{2^{t-1}})$ such that $\phi(1)=2^q$. Hence for every $\psi \in \Irr(P_2)$, $\mathcal{X}(\phi; \psi) \in \Irr(P_{2^t})$ has degree $2^k$.
	If instead $r=1$, then $q<\alpha_{2^{t-1}}$. If $q=0$ then $k=1$, and $|\Lin(P_{2^t})|=2^t>2$. Hence we can choose $L, L' \in \Lin(P_{2^t})$, $L\neq L'$ and $(L \times L') \up^{P_{2^t}}$ is an irreducible character of degree $2=2^k$.
	If $1\leq q < \alpha_{2^{t-1}}$, by inductive hypothesis there exist $\phi, \psi \in \Irr(P_{2^{t-1}})$ such that $\phi(1)=2^{q-1}$, $\psi(1)=2^{q+1}$. Hence $(\phi \times \psi) \up^{P_{2^t}} \in \Irr(P_{2^t})$ and it has degree $2^k$.
	If $k=\alpha_{2^t}$, by inductive hypothesis there exist $\psi_1$, $\psi_2$ and $\psi_3$ distinct irreducible characters of $P_{2^{t-1}}$ of degree $2^{\alpha_{2^{t-1}}}$. Since $\alpha_{2^t}=2\alpha_{2^{t-1}} +1$ we obtain that $\left(\psi_1 \times \psi_2 \right)\up^{P_{2^t}}, \left(\psi_2 \times \psi_3 \right)\up^{P_{2^t}}, \left(\psi_1 \times \psi_3 \right)\up^{P_{2^t}} \in \Irr(P_{2^t})$ are three distinct irreducible characters of degree $2^{\alpha_{2^t}}$.
	
Suppose now that $n\in\mathbb{N}$ is arbitrary and let $n=\sum_{i=1}^r 2^{n_i}$ with $n_1 \geq \cdots \geq n_r \geq 0$, be its binary expansion. From Section 
\ref{sec: wr} we know that $P_n = P_{2^{n_1}} \times \cdots \times P_{2^{n_r}}$ and therefore that $\Irr(P_n)=\Set{\phi_1 \times \cdots \times \phi_r | \phi_i \in \Irr(P_{2^{n_i}}),\ i=1,\dots ,r}$. Using this together with the information obtained above in the $2$-power case, we easily obtain that $\cd(P_{n})= \Set{2^j|j\in [0,\alpha_n]}$ and that  $|\{\theta\in\mathrm{Irr}(P_n)\ |\ \theta(1)=2^{\alpha_{n}}\}|\geq 3$, for any $n\geq 8$.
%
\end{proof}

We will state now a lemma that we will need for the proof of the main theorems of this section. To ease the notation we will denote by $Z^\lambda_\phi$ (instead of  $Z^{\chi^\lambda}_\phi$) the Sylow branching coefficient corresponding to the characters $\chi^\lambda\in\mathrm{Irr}(\fS_n)$ and $\phi\in\mathrm{Irr}(P_n)$.

Here, we start by giving some precise information concerning Sylow branching coefficients $Z^\lambda_\phi$, where $\phi$ is an irreducible character of the Sylow subgroup of degree $2$. In order to do this, it is convenient to introduce the following notation. 
For $\lambda=\left(2^n-x,1^x\right) \in \mathcal{H}(2^n)$ with $x\in [0, 2^n-1]$, we define $a_n^x\in\mathbb{N}$ as follows:
\[	a_n^x := | \Set{\phi \in \Irr(P_{2^n}) | \phi(1)=2 \mbox{ and } Z^\lambda_\phi\neq 0} | . \]

\begin{lemma}\label{lem: a_n^x}
	Let $n\geq 2$ and $\lambda=\left(2^n-x,1^x\right) \in \mathcal{H}(2^n)$.
	Then $a_n^x=\min \lbrace x, 2^n-1-x \rbrace$.
\end{lemma}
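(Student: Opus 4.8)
The plan is to proceed by induction on $n$, mirroring the structure of the proofs of Theorem \ref{lem: unique_linear} and Lemma \ref{lem: hook-1}. The base case $n=2$ is handled directly: $P_4$ has a unique irreducible character $\varphi$ of degree $2$, and from Example \ref{ex: n=1,2,3} one reads off that $\chi^{(3,1)}\down_{P_4}$ and $\chi^{(2,1^2)}\down_{P_4}$ both contain $\varphi$ while $\chi^{(4)}\down_{P_4}=1_{P_4}$ and $\chi^{(1^4)}\down_{P_4}=\mathcal{X}(1,1)$ do not; this gives $a_2^x=\min\{x,3-x\}$ for $x\in\{0,1,2,3\}$. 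Note also that by conjugation symmetry of $\mathcal{H}(2^n)$ (the map $\lambda\mapsto\lambda'$, $x\mapsto 2^n-1-x$) the function $a_n^x$ is symmetric under $x\mapsto 2^n-1-x$, so it suffices to treat $x\le 2^{n-1}-1$, where the claim becomes $a_n^x=x$.

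For the inductive step, fix $n\ge 3$ and $\lambda=(2^n-x,1^x)$ with $0\le x\le 2^{n-1}-1$. I would restrict $\chi^\lambda$ to $Y=\fS_{2^{n-1}}\times\fS_{2^{n-1}}$ using the decomposition recorded in the Example following Lemma \ref{lem: diff}: the hook constituents of $\chi^\lambda\down_Y$ are precisely $\chi^{(2^{n-1})}\times\chi^{(2^{n-1}-x,1^x)}$, its conjugate, and $\chi^{(2^{n-1}-y,1^y)}\times(\chi^{(2^{n-1}-(x-y),1^{x-y})}+\chi^{(2^{n-1}-(x-y-1),1^{x-y-1})})$ for $1\le y\le x-1$ (and by Lemma \ref{lem: hook-hooks} no non-hook constituents restrict to anything relevant, but in fact we need all constituents here since degree-$2$ characters of $P_{2^n}$ can arise from non-hook constituents as well — so I must instead argue at the level of $P_{2^{n-1}}\times P_{2^{n-1}}$ directly). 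The cleaner route: the irreducible characters of $P_{2^n}=P_{2^{n-1}}\wr P_2$ of degree $2$ are exactly (i) $(\phi\times\psi)\up^{P_{2^n}}$ with $\phi,\psi\in\Lin(P_{2^{n-1}})$, $\phi\neq\psi$, and (ii) $\mathcal{X}(\eta;\theta)$ with $\eta\in\Irr(P_{2^{n-1}})$ of degree $2$ and $\theta\in\Irr(P_2)$. I would count the contributions of each type to $\chi^\lambda\down_{P_{2^n}}$ separately, using that $[\chi^\lambda\down_{P_{2^{n-1}}\times P_{2^{n-1}}},\,\phi\times\psi]$ is computed from the $Y$-restriction above together with the Sylow restrictions $\chi^{(2^{n-1}-z,1^z)}\down_{P_{2^{n-1}}}$, which by Lemma \ref{lem: hook-1} (for $z=1$) and more generally by Theorem \ref{thm: 4.3} have controlled linear parts, and whose degree-$2$ parts are governed by $a_{n-1}^z$ via the inductive hypothesis.

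Concretely, for type (i): a non-linear induced character $(\phi\times\psi)\up^{P_{2^n}}$ occurs in $\chi^\lambda\down_{P_{2^n}}$ iff $\phi\times\psi$ occurs in $\chi^\lambda\down_{P_{2^{n-1}}\times P_{2^{n-1}}}$, and the number of unordered pairs $\{\phi,\psi\}$ of distinct linear characters that so occur is what one must tally. The linear constituents of each $\chi^{(2^{n-1}-z,1^z)}\down_{P_{2^{n-1}}}$ are described by Theorem \ref{thm: 4.3} (here all $n_i$ are $2^{n-1}$, $t=1$, so there is a unique linear constituent $L_{(2^{n-1}-z,1^z)}$, but when we tensor/induce inside $\fS_{2^{n-1}}$ the relevant count changes) — I would extract from the $Y$-decomposition the multiset of pairs $(\mu,\nu)$ of hooks and then the multiset of pairs of their linear constituents, and check these are all distinct, contributing exactly $x$ such pairs. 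For type (ii): $\mathcal{X}(\eta;\theta)$ with $\eta(1)=2$ occurs iff $\eta\times\eta$ occurs in $\chi^\lambda\down_{P_{2^{n-1}}\times P_{2^{n-1}}}$ with appropriate $\theta$-part, which by the $Y$-decomposition forces a diagonal hook pair $(\mu,\mu)$; the only diagonal hook constituent is $\chi^{(2^{n-1}-y,1^y)}\times\chi^{(2^{n-1}-y,1^y)}$ appearing when $x=2y$ or $x=2y+1$, i.e.\ (for our range) a single value of $y$, and then by the inductive hypothesis this contributes $2\cdot a_{n-1}^y$ characters $\mathcal{X}(\eta;\theta)$ of degree $2$ — or rather one must be careful with the $\theta\in\{\phi_0,\phi_1\}$ multiplicities and with possible coincidences between type (i) and type (ii) characters, but these families are disjoint. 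I expect the main obstacle to be precisely this bookkeeping: verifying that the degree-$2$ constituents produced by distinct hook pairs $(\mu,\nu)$ in the $Y$-decomposition are pairwise distinct (so that no cancellation or over-counting occurs) and reconciling the type-(i) count $x$ with the type-(ii) contributions so that the total is exactly $x=\min\{x,2^n-1-x\}$, using $\alpha_{n-1}^y \le y$ and the recursion $a_{n-1}^y=\min\{y,2^{n-1}-1-y\}$. A Murnaghan–Nakayama / character-value argument on a $2^n$-cycle, as in the proof of Theorem \ref{lem: unique_linear}, may be needed as an auxiliary tool to pin down the $\theta$-labels in type (ii).
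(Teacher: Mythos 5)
There is a genuine error at the heart of your plan: your classification of the degree-$2$ irreducible characters of $P_{2^n}=P_{2^{n-1}}\wr P_2$ is wrong. A character of the form $\mathcal{X}(\eta;\theta)$ with $\eta\in\Irr(P_{2^{n-1}})$, $\eta(1)=2$ and $\theta\in\Irr(P_2)$ has degree $\eta(1)^2\cdot\theta(1)=4$, not $2$ (the module $\tilde V^{\otimes 2}$ has dimension $(\dim V)^2$). Hence your ``type (ii)'' family does not consist of degree-$2$ characters at all, and the only irreducible characters of $P_{2^n}$ of degree $2$ are the induced characters $(L_1\times L_2)\up^{P_{2^n}}$ with $L_1\neq L_2$ linear characters of $P_{2^{n-1}}$ -- exactly your ``type (i)''. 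This is not a harmless bookkeeping issue: your proposed reconciliation of the type-(i) count ($x$ pairs) with a nonzero type-(ii) contribution $2\cdot a_{n-1}^y$ would overshoot the claimed value $\min\{x,2^n-1-x\}$, so the argument as designed cannot close.

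Once the classification is corrected, the whole proof collapses to your type-(i) analysis and, notably, no induction on $n$ is needed; this is precisely the paper's route. One shows $Z^\lambda_\phi\neq 0$ for $\phi=(L_1\times L_2)\up^{P_{2^n}}$ iff $[\chi^\lambda\down_Y,\chi^{\mu_1}\times\chi^{\mu_2}]\neq 0$, where $\mu_i$ is the unique hook of $2^{n-1}$ with $[\chi^{\mu_i}\down_{P_{2^{n-1}}},L_i]\neq 0$ (Theorem \ref{thm: GMcKay} plus Lemma \ref{lem: hook-hooks}); by the injectivity of $h\mapsto L_h$ from Theorem \ref{lem: unique_linear}, distinct unordered pairs of distinct hooks give distinct $\phi$, and the Littlewood--Richardson decomposition of $\chi^\lambda\down_Y$ for $x\le 2^{n-1}-1$ exhibits exactly $x$ such pairs; the range $x\geq 2^{n-1}$ is handled by conjugation, as you correctly anticipate. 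So the skeleton of your type-(i) count is sound and matches the paper, but you must delete the type-(ii) branch, and with it the induction on $n$ and the Murnaghan--Nakayama auxiliary step, neither of which is needed.
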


\begin{proof}
Recall that $P_{2^n}=B\rtimes P_2$, for some $B\leq \fS_{2^n}$ such that $B\cong P_{2^{n-1}}\times P_{2^{n-1}}$. Let $Y=\fS_{2^{n-1}}\times \fS_{2^{n-1}}\leq \fS_{2^n}$ be chosen such that $B\leq Y$. Let $\phi\in\mathrm{Irr}(P_{2^n})$ be such that $\phi(1)=2$. Then $\phi=(L_1\times L_2)\up_{B}^{P_{2^n}}$, for some $L_1,L_2\in\mathrm{Lin}(P_{2^{n-1}})$ such that $L_1\neq L_2$. In particular, we have that $Z^\lambda_{\phi}\neq 0$ if and only if $\left[\chi^\lambda\down_{B}, L_1\times L_2 \right]\neq 0$. Using this observation together with the Littlewood-Richardson rule and Theorem \ref{thm: GMcKay}, we deduce that $Z^\lambda_{\phi}\neq 0$ if and only if $\left[\chi^\lambda\down_{Y}, \chi_1\times \chi_2 \right]\neq 0$ where $\chi_i$ is the unique character in $\mathrm{Irr}_{\mathcal{H}}(\fS_{2^{n-1}})$ such that $\left[\chi_i\down_{P_{2^{n-1}}}, L_i\right]\neq 0$.

Let us first suppose that $x\leq 2^n-1-x$, in other words $x\in \left[0,2^{n-1}-1\right]$. If $x=0$, then $\lambda=(2^n)$ and $a_n^0=0$. 
Otherwise using the Littlewood-Richardson rule we have
	\begin{equation}\label{eq: dec_lambda}
		\begin{split}
		\chi^\lambda \down_{Y} = &\sum_{y=0}^m \chi^{\left(2^{n-1}-y, 1^y \right)} \times \left[  \chi^{\left( 2^{n-1}-(x-y), 1^{x-y} \right)} + \chi^{\left( 2^{n-1}-(x-y-1), 1^{x-y-1} \right)} \right] \\ + &\sum_{y=0}^m \left[  \chi^{\left( 2^{n-1}-(x-y), 1^{x-y} \right)} + \chi^{\left( 2^{n-1}-(x-y-1), 1^{x-y-1} \right)} \right]  \times \chi^{\left(2^{n-1}-y, 1^y \right)} , \end{split}
	\end{equation}
where $m=\lfloor x/2\rfloor$. From Equation (\ref{eq: dec_lambda}) we can see that there are exactly $x$ distinct unordered pairs $(\mu_1, \mu_2)\in \mathcal{H}(2^{n-1}) \times \mathcal{H}(2^{n-1}) $ such that $\mu_1\neq \mu_2$ and such that $\left[\chi^\lambda\down_{Y}, \chi^{\mu_1}\times \chi^{\mu_2} \right]\neq 0$. Using the observations discussed at the start of the proof we deduce that $a_n^x=x$. 

If $x\geq 2^n-1-x$ then we are in the case $x\in \left[2^{n-1}, 2^n -1\right]$ and we consider $\lambda'$, the conjugate partition of $\lambda$.
We observe that $\lambda' =\left(x+1, 1^{2^n-x-1}\right)$ and $0\leq 2^n-x-1 \leq 2^{n-1}-1$. Since $\chi^{\lambda'}\down_{P_{2^n}}= \chi^\lambda \down_{P_{2^n}} \cdot \chi^{\left(1^{2^n}\right)} \down_{P_{2^n}}$, we deduce that $a_n^x=a_n^{2^n-x-1}=2^n-x-1$, as desired. 
\end{proof}

Using this lemma we can notice that $a_n^x \neq 0$ if and only if $x \notin \{0,2^n-1 \}$, that is $\lambda \notin \{(2^n),(1^{2^n})\}$.
We deduce that $\mathcal{H}_{2^n}^1 = \bB_{2^n}(2^n-1)$ for every natural number $n\geq 2$.

We are now ready to state and prove one of the main results of this section. 

\begin{theorem}\label{thm: inclusion}
	
	Let $n \in \mathbb{N}$. For every $l,k \in [0, \alpha_{n}]$ such that $\ell\leq k$, we have
	\[\mathcal{H}_{n}^k \subseteq \mathcal{H}_{n}^{\ell}.\]
\end{theorem}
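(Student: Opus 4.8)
The plan is to reduce the statement to the case where $n$ is a power of $2$ and then to run an induction on the exponent, using the wreath product structure $P_{2^t} = P_{2^{t-1}} \wr P_2$ together with the combinatorial results already established. First I would observe that both $\mathcal{H}_n^k$ and $\mathcal{H}_n^\ell$ are stable under conjugation of partitions, and that it suffices to prove $\mathcal{H}_n^{k} \subseteq \mathcal{H}_n^{k-1}$ for every $k \in [1,\alpha_n]$; the general inclusion follows by iterating. For the reduction to prime powers, write $n = \sum_{i=1}^r 2^{n_i}$ and recall $P_n = P_{2^{n_1}} \times \cdots \times P_{2^{n_r}}$ and $\alpha_n = \sum_i \alpha_{2^{n_i}}$. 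If $\chi^\lambda \down_{P_n}$ has a constituent of degree $2^k$, then restricting $\lambda$ to the Young subgroup $\fS_{2^{n_1}} \times \cdots \times \fS_{2^{n_r}}$ (which contains $P_n$) and applying Lemma \ref{lem: hook-hooks}, that constituent is of the form $\theta_1 \times \cdots \times \theta_r$ with $\theta_i \in \Irr(P_{2^{n_i}})$ appearing in $\chi^{h_i}\down_{P_{2^{n_i}}}$ for some hook $h_i \in \mathcal{H}(2^{n_i})$, and $\sum_i \log_2 \theta_i(1) = k$. By the prime-power case applied to each factor (lowering one $\theta_i$ of positive degree to degree $\theta_i(1)/2$ and re-inducing via Littlewood--Richardson) one produces a hook of $n$ whose Sylow restriction has a constituent of degree $2^{k-1}$, and one checks this hook still lies in $\mathcal{H}(n)$.

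For the prime-power case, set $n = 2^t$ and induct on $t$; the cases $t \le 3$ are handled by Example \ref{ex: n=1,2,3}. Fix $\lambda = (2^t - x, 1^x) \in \mathcal{H}_{2^t}^k$ with $k \ge 1$; I want to show $\lambda \in \mathcal{H}_{2^t}^{k-1}$. Write $P_{2^t} = B \rtimes P_2$ with $B \cong P_{2^{t-1}} \times P_{2^{t-1}}$, and let $Y = \fS_{2^{t-1}} \times \fS_{2^{t-1}} \supseteq B$. A degree-$2^k$ irreducible constituent $\Phi$ of $\chi^\lambda \down_{P_{2^t}}$ is, by the description of $\Irr(G \wr C_p)$ in Section \ref{sec: wr}, either of the form $\mathcal{X}(\phi; \theta)$ with $\phi \in \Irr(P_{2^{t-1}})$ of degree $2^{k/2}$ (only when $k$ is even), or of the form $(\phi_1 \times \phi_2)\up^{P_{2^t}}_B$ with $\phi_1 \neq \phi_2$, $\phi_i \in \Irr(P_{2^{t-1}})$ and $\phi_1(1)\phi_2(1) = 2^{k-1}$. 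In either case $\chi^\lambda \down_B$ has a constituent $\phi_1 \times \phi_2$ with $\log_2\phi_1(1) + \log_2\phi_2(1) \in \{k, k-1\}$; restricting $\chi^\lambda$ first to $Y$ and using Lemma \ref{lem: hook-hooks} together with Theorem \ref{thm: GMcKay}, this forces the existence of hooks $\mu_1, \mu_2 \in \mathcal{H}(2^{t-1})$ with $\mathcal{LR}(\lambda; \mu_1, \mu_2) \neq 0$ and $\chi^{\mu_i}\down_{P_{2^{t-1}}}$ having a constituent of degree $\phi_i(1)$. By the inductive hypothesis applied in $P_{2^{t-1}}$, one may replace $\mu_1$ (say, one with $\phi_1(1)\ge 2$) by a hook $\mu_1' \in \mathcal{H}(2^{t-1})$ whose Sylow restriction has a constituent of degree $\phi_1(1)/2$, and then find $\lambda' \in \mathcal{H}(2^t)$ with $\mathcal{LR}(\lambda'; \mu_1', \mu_2) \neq 0$; the product $\psi_1' \times \psi_2$ of the corresponding Sylow constituents has degree $2^{k-1-(k-1)} \cdots$ — more precisely degree $2^{j}$ with $j = k-1$ after inducing from $B$ to $P_{2^t}$ — so $\lambda' \in \mathcal{H}_{2^t}^{k-1}$, and one checks $\lambda = \lambda'$ or argues that the freedom in choosing $\lambda'$ can be used to hit $\lambda$ itself. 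The cleanest version is to keep the \emph{same} $\lambda$: from $\mathcal{LR}(\lambda; \mu_1, \mu_2)\neq 0$ and the fact that lowering a constituent's degree in $P_{2^{t-1}}$ does not change which hook partition it came from, deduce directly that $\chi^{\mu_1}\down_{P_{2^{t-1}}}$ and $\chi^{\mu_2}\down_{P_{2^{t-1}}}$ have constituents whose degrees multiply (after the relevant induction or $\mathcal{X}$-construction) to $2^{k-1}$, giving a degree-$2^{k-1}$ constituent of $\chi^\lambda \down_{P_{2^t}}$.

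The main obstacle I anticipate is the bookkeeping in the wreath product step: when $\Phi = (\phi_1 \times \phi_2)\up^{P_{2^t}}_B$ and I lower $\phi_1$ to a character $\phi_1'$ of degree $\phi_1(1)/2$, I must ensure that $\phi_1' \times \phi_2$ is still an honest constituent of $\chi^\lambda \down_B$ (not merely of $\chi^{\mu_1'} \times \chi^{\mu_2} \down_B$ for some \emph{other} pair of hooks) and that $\phi_1' \neq \phi_2$ so that the induced character $(\phi_1' \times \phi_2)\up^{P_{2^t}}_B$ is irreducible of degree exactly $2 \cdot \phi_1'(1)\phi_2(1) = 2^{k-1}$; the edge cases are $\phi_1' = \phi_2$ (where the induced character splits and one instead gets $\mathcal{X}$-type constituents of degree $\phi_1'(1)$, which is fine since $2\phi_1'(1) = \phi_1(1)\phi_2(1)/\phi_2(1)\cdot 2\le 2^{k-1}$ needs separate checking) and $\phi_1(1) = 2$, $k=1$, which is exactly the content already recorded after Lemma \ref{lem: a_n^x} ($\mathcal{H}_{2^n}^1 = \bB_{2^n}(2^n-1)$ and $\mathcal{H}_{2^n}^0 = \mathcal{H}(2^n)$, so the inclusion is trivial there). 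Handling the case distinction $k$ even versus $k$ odd — equivalently $\Phi$ of type $\mathcal{X}(\phi;\theta)$ versus induced type — cleanly, and making the general-$n$ reduction fully rigorous including the claim that the produced hook remains in $\mathcal{H}(n)$ (which follows since lowering degrees via Littlewood--Richardson among hooks preserves the hook shape, cf. Lemma \ref{lem: hook-hooks}), is where the care is needed.
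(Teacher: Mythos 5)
Your overall architecture is the same as the paper's: reduce to $n=2^t$, induct on $t$, write $P_{2^t}=B\rtimes P_2$ with $B\cong P_{2^{t-1}}\times P_{2^{t-1}}$, split according to whether the degree-$2^k$ constituent is of type $\mathcal{X}(\psi;\alpha)$ or of induced type $(\phi_1\times\phi_2)\up^{P_{2^t}}_B$, and lower the degree of one base-group factor via the inductive hypothesis while keeping the same $\lambda$ (your final ``cleanest version'' is indeed the correct one, and likewise the general-$n$ reduction should keep $\lambda$ and the hooks $\mu_i$ and simply lower the exponents $j_i$ to any $d_i\le j_i$ with $\sum_i d_i=\ell$; no new hook of $n$ needs to be produced). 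However, two steps of your plan fail as written. First, in the $\mathcal{X}$-type case ($k$ even) the constituent has degree $\psi(1)^2=2^k$ with $\psi(1)=2^{k/2}$, and since $k-1$ is odd a degree-$2^{k-1}$ constituent cannot be of $\mathcal{X}$-type; it must be $(\phi_1\times\phi_2)\up^{P_{2^t}}_B$ of degree $2\,\phi_1(1)\phi_2(1)$, which forces the base-group exponents to sum to $k-2$. Your recipe of halving one factor once produces exponents summing to $k-1$ and hence an induced constituent of degree $2^k$ again, not $2^{k-1}$. The paper's fix is to apply the inductive hypothesis \emph{twice} to one of the two hooks $\mu,\nu\in\mathcal{H}_{2^{t-1}}^{k/2}$, obtaining a constituent of degree $2^{k/2-2}$ to pair with $\psi$ (distinctness is then automatic since the degrees differ), with the boundary case $k/2=1$ absorbed by the identity $\mathcal{H}_{2^t}^1=\bB_{2^t}(2^t-1)$ from Lemma \ref{lem: a_n^x}.

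Second, in the induced case your handling of the collision $\phi_1'=\phi_2$ does not work: when $\phi_1'=\phi_2$ the character $(\phi_1'\times\phi_2)\up^{P_{2^t}}_B$ splits into constituents $\mathcal{X}(\phi_2;\theta)$ of degree $\phi_2(1)^2=2^{2h_2}=2^{k-2}$ (not $\phi_1'(1)$, and not $2^{k-1}$), so the splitting does not rescue the argument, and the inequality in your parenthetical establishes nothing. The collision can only occur when $h_2=h_1-1$, and the paper escapes it by lowering the \emph{other} factor instead: take a constituent $\psi'$ of $\chi^{\mu_2}\down_{P_{2^{t-1}}}$ of degree $2^{h_2-1}$ (inductive hypothesis, valid when $h_2>0$) and form $(\theta_1\times\psi')\up^{P_{2^t}}_B$, which is irreducible of degree $2^{k-1}$ because $h_1\neq h_2-1$; when $h_2=0$ one is back to $k=2$, again settled by Lemma \ref{lem: a_n^x}. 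With these two repairs your plan coincides with the paper's proof.
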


\begin{proof}
	Suppose first that $n=2^t$, for some $t\in\mathbb{N}$. Clearly, it is enough to show that 
	$$\mathcal{H}_{2^t}^k \subseteq \mathcal{H}_{2^t}^{k-1}, \mbox{ for every } k\in [1, \alpha_{2^t}].$$
	We proceed by induction on $t \geq 2$. If $t=2$, then the statement holds by direct computation (see Example \ref{ex: n=1,2,3}). 
Let $t\geq 3$ and let $\lambda \in \mathcal{H}_{2^t}^k$. By definition there exists an irreducible constituent $\theta$ of $\chi^\lambda \down_{P_{2^t}}$ of degree $2^k$.
	If $\theta=\mathcal{X}(\psi; \alpha)$ with $\psi \in \Irr(P_{2^{t-1}})$, $\psi(1)=2^\frac{k}{2}$ and $\alpha \in \Lin(P_2)$, then there exist $\mu, \nu \in \mathcal{H}(2^{t-1})$ such that $\mathcal{LR}(\lambda; \mu, \nu)\neq 0$ and $\psi$ is a constituent of both $\chi^\mu \down_{P_{2^{t-1}}}$ and $\chi^\nu \down_{P_{2^{t-1}}}$. In particular $\mu, \nu \in \mathcal{H}_{2^{t-1}}^{\frac{k}{2}}$.
	If $\frac{k}{2} >1$, using the inductive hypothesis for $\frac{k}{2}$ and then for $\frac{k}{2}-1$, we have that $\mu \in \mathcal{H}_{2^{t-1}}^{\frac{k}{2} -2}$. Therefore there exists an irreducible constituent $\psi'$ of $\chi^\mu \down_{P_{2^{t-1}}}$ of degree $2^{\frac{k}{2} -2}$. Hence we have that $(\psi' \times \psi) \up^{P_{2^t}}$ is an irreducible constituent of $\chi^\lambda \down_{P_{2^t}}$ of degree $2^{k-1}$ and we conclude that $\lambda \in \mathcal{H}_{2^t}^{k-1}$.
	Otherwise, if $\frac{k}{2}=1$ then $k=2$ and $\lambda \notin \{(2^t), (1^{2^t})\}$. Hence $\lambda\in \bB_{2^t}(2^{t}-1)$. By Lemma \ref{lem: a_n^x} we know that $\bB_{2^t}(2^{t}-1)=\mathcal{H}_{2^{t}}^{1}$. It follows that $\lambda \in \mathcal{H}_{2^{t}}^{k-1}= \mathcal{H}_{2^{t}}^{1}$.

	Suppose now that $\theta= \left(\theta_1 \times \theta_2\right) \up^{P_{2^t}}$ with $\theta_1, \theta_2 \in \Irr(P_{2^{t-1}})$, $\theta_1 \neq \theta_2$ and $\theta_1(1)=2^{h_1}$, $\theta_2(1)=2^{h_2}$ where $h_1+h_2=k-1$. Then there exist $\mu_1, \mu_2 \in \mathcal{H}(2^{t-1})$ such that $\mathcal{LR}(\lambda; \mu_1, \mu_2)\neq 0$ and $\theta_1$ (respectively $\theta_2$) is an irreducible constituent of $\chi^{\mu_1} \down_{P_{2^{t-1}}}$ (respectively $\chi^{\mu_2} \down_{P_{2^{t-1}}}$). In particular, $\mu_i \in \mathcal{H}_{2^{t-1}}^{h_i}$ for $i=1,2$.
	Suppose $h_1=h_2=\frac{k-1}{2}:=h$. If $h=0$ then $k=1$ and we know that $\lambda \in \mathcal{H}_{2^t}^0=\mathcal{H}_{2^{t}}^{k-1}$ by Theorem \ref{thm: 4.3}. If $h>0$ then by induction we have that $\mu_1 \in \mathcal{H}_{2^{t-1}}^{h} \subseteq \mathcal{H}_{2^{t-1}}^{h-1}$. Hence there exists an irreducible constituent $\psi$ of $\chi^{\mu_1} \down_{P_{2^{t-1}}}$ of degree $2^{h-1}$. Therefore $\left(\psi \times \theta_2\right) \up^{P_{2^t}}$ is an irreducible constituent of $\chi^{\lambda} \down_{P_{2^t}}$ of degree $2^{k-1}$, and so $\lambda \in \mathcal{H}_{2^{t}}^{k-1}$.
	
	If instead $h_1 \neq h_2$, we can suppose without loss of generality that $h_1 >h_2$. In particular $h_1 \geq 1 $, so we can use the inductive hypothesis and we have $\mu_1 \in \mathcal{H}_{2^{t-1}}^{h_1 -1}$. Hence there exists an irreducible constituent $\psi$ of $\chi^{\mu_1} \down_{P_{2^{t-1}}}$ of degree $2^{h_1 -1}$.
	If $\psi \neq \theta_2$ then considering $(\psi \times \theta_2)\up^{P_{2^t}}$ we deduce that $\lambda \in \mathcal{H}_{2^{t}}^{k-1}$, since $\theta_2(1)=2^{h_2}$ and $h_1+h_2=k-1$.
	If instead $\psi =\theta_2$ then in particular $h_2=h_1-1$. Suppose $h_2>0$ and use the inductive hypothesis. We have $\mu_2 \in \mathcal{H}_{2^{t-1}}^{h_2}\subseteq \mathcal{H}_{2^{t-1}}^{h_2-1}$. Hence there exists $\psi' \in \Irr(P_{2^{t-1}})$ such that $\left[\psi', \chi^{\mu_2} \down_{P_{2^{t-1}}}\right]\neq 0$ and $\psi'(1)=2^{h_2-1}$. We conclude that $\lambda \in \mathcal{H}_{2^t}^{k-1}$ since $(\theta_1 \times \psi' )\up^{P_{2^t}}$ is an irreducible constituent of $\chi^\lambda \down_{P_{2^{t}}}$ of degree $2^{k-1}$.
	If $h_2=0$ then $k=2$ and arguing as before, using Lemma \ref{lem: a_n^x}, we deduce that $\lambda \in \mathcal{H}_{2^t}^1 =\mathcal{H}_{2^t}^{k-1}$.
	
	Let now $n\in \mathbb{N}$ and let $n=\sum_{i=1}^r 2^{n_i}$ be its binary expansion. 
	Let $\lambda \in \mathcal{H}_n^k$, then there exists $\phi \in \Irr(P_n)$ such that $\left[\phi , \chi^\lambda \down_{P_{n}} \right] \neq 0$ and $\phi(1)=2^k$. By the structure of the irreducible characters of $P_n$ in section \ref{sec: wr}, $\phi=\phi_1 \times \cdots \times \phi_r$, with $\phi_i \in \Irr(P_{2^{n_i}})$ and $\phi_i(1)=2^{j_i}$ for every $i=1,\dots ,r$, where $j_1+ \cdots + j_r=k$. Therefore for every $i=1,\dots ,r$ there exists $\mu_i \in \mathcal{H}(2^{n_i})$ such that $\phi_i$ is an irreducible constituent of $\chi^{\mu_i} \down_{P_{2^{n_i}}}$ and $\mathcal{LR}(\lambda; \mu_1, \dots , \mu_r)\neq 0$.
	Let $\ell\leq k$ and for every $i=1, \dots , r$ let $d_i \in \left[0, \alpha_{2^{n_i}}\right]$, $d_i \leq j_i$ such that $d_1+\cdots +d_r =\ell$. By previous case, for every $i=1, \dots , r$, $\mu_i \in \mathcal{H}_{2^{n_i}}^{j_i} \subseteq \mathcal{H}_{2^{n_i}}^{d_i}$. Hence there exists an irreducible constituent $\psi_i$ of $\chi^{\mu_i} \down_{P_{2^{n_i}}}$ of degree $2^{d_i}$. Therefore $\psi_1 \times \cdots \times \psi_r$ is an irreducible constituent of $\chi^\lambda \down_{P_{n}}$ of degree $2^{\ell}$, and so $\lambda \in \mathcal{H}_n^{\ell}$.	
\end{proof}

We introduce here a combinatorial operation between hook partitions that is very similar to the $\star$ operation described before Lemma \ref{lem: B star}.

\begin{definition}
	Let $n, m \in \mathbb{N}$ and $A\subseteq \mathcal{P}(n)$, $B\subseteq \mathcal{P}(m)$.
	\[ A \Diamond  B := (A\star B) \cap \mathcal{H}(n+m). \]
\end{definition}

\begin{lemma}\label{lem: B-diamond}
	Let $n, n', t, t' \in \mathbb{N}$ be such that $\frac{n}{2} < t \leq n$ and $\frac{n'}{2} < t' \leq n'$. Then
	\[\bar{\mathcal{B}}_n(t) \Diamond \bar{\mathcal{B}}_{n'}(t') = \bar{\mathcal{B}}_{n+n'}(t+t'). \]
\end{lemma}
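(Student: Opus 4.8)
The plan is to establish the two inclusions separately. The inclusion $\bar{\mathcal{B}}_n(t) \Diamond \bar{\mathcal{B}}_{n'}(t') \subseteq \bar{\mathcal{B}}_{n+n'}(t+t')$ is immediate from what is already available: since $\bar{\mathcal{B}}_n(t) \subseteq \mathcal{B}_n(t)$ and $\bar{\mathcal{B}}_{n'}(t') \subseteq \mathcal{B}_{n'}(t')$, and $\star$ is clearly monotone in each argument, Lemma~\ref{lem: B star} gives $\bar{\mathcal{B}}_n(t) \star \bar{\mathcal{B}}_{n'}(t') \subseteq \mathcal{B}_n(t) \star \mathcal{B}_{n'}(t') = \mathcal{B}_{n+n'}(t+t')$; intersecting both sides with $\mathcal{H}(n+n')$ yields the inclusion.

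For the reverse inclusion, I would first record the explicit shape of $\bar{\mathcal{B}}_m(s)$ under the hypothesis $\tfrac{m}{2} < s \le m$: a hook $(m-z,1^z)$ with $z \in [0,m-1]$ lies in $\mathcal{B}_m(s)$ exactly when $m-z \le s$ and $z+1 \le s$, i.e. when $z \in [m-s, s-1]$, and this integer interval is non-empty precisely because $2s \ge m+1$. Thus $\bar{\mathcal{B}}_m(s) = \{(m-z,1^z) : z \in [m-s, s-1]\}$. Now let $\lambda \in \bar{\mathcal{B}}_{n+n'}(t+t')$, write $\lambda = (n+n'-x, 1^x)$, so that $x \in [n+n'-t-t',\, t+t'-1]$. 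To show $\lambda \in \bar{\mathcal{B}}_n(t)\Diamond\bar{\mathcal{B}}_{n'}(t')$ it suffices, since $\lambda$ is a hook, to find hooks $\mu = (n-x_1, 1^{x_1}) \in \bar{\mathcal{B}}_n(t)$ and $\nu = (n'-x_2, 1^{x_2}) \in \bar{\mathcal{B}}_{n'}(t')$ with $\mathcal{LR}(\lambda;\mu,\nu) \ne 0$; by Lemma~\ref{lem: diff} applied with two factors, the latter holds exactly when $x - x_1 - x_2 \in \{0,1\}$. So I need $x_1 \in [n-t, t-1]$ and $x_2 \in [n'-t', t'-1]$ with $x_1 + x_2 \in \{x-1, x\}$.

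The combinatorial core is then an elementary sumset computation: the set of all possible values of $x_1 + x_2$ is the integer interval $[n+n'-t-t',\, t+t'-2]$, and allowing also $x_1 + x_2 + 1$ this covers $[n+n'-t-t',\, t+t'-1]$, which is exactly the range of admissible $x$. Concretely, if $x \le t+t'-2$ I take $x_1 + x_2 = x$, while if $x = t+t'-1$ I take $x_1 + x_2 = t+t'-2$, which still lies in the sumset because $2(t+t') \ge n+n'+2$ (a consequence of $2t \ge n+1$ and $2t' \ge n'+1$). Picking $x_1, x_2$ in their respective intervals realizing the chosen sum produces the desired $\mu,\nu$, so $\lambda \in \bar{\mathcal{B}}_n(t) \star \bar{\mathcal{B}}_{n'}(t')$, and being a hook, $\lambda \in \bar{\mathcal{B}}_n(t)\Diamond\bar{\mathcal{B}}_{n'}(t')$. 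The only genuine obstacle here is keeping the interval bookkeeping straight — in particular checking that the two intervals $[n-t,t-1]$ and $[n'-t',t'-1]$ are non-empty (which is precisely where $\tfrac{n}{2} < t$ and $\tfrac{n'}{2} < t'$ are used) and that the boundary case $x = t+t'-1$ is covered.
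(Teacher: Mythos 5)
Your proof is correct, and while the first inclusion coincides with the paper's argument (monotonicity of $\star$ plus Lemma \ref{lem: B star}, then intersecting with $\mathcal{H}(n+n')$), your reverse inclusion takes a genuinely different route. The paper observes that the forward computation already yields $\mathcal{B}_n(t)\Diamond\mathcal{B}_{n'}(t') = \bar{\mathcal{B}}_{n+n'}(t+t')$, so it suffices to show $\mathcal{B}_n(t)\Diamond\mathcal{B}_{n'}(t') \subseteq \bar{\mathcal{B}}_n(t)\Diamond\bar{\mathcal{B}}_{n'}(t')$; this is immediate from Lemma \ref{lem: hook-hooks}, since any $\mu,\mu'$ witnessing a nonzero $\mathcal{LR}(\lambda;\mu,\mu')$ with $\lambda$ a hook are forced to be hooks themselves. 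Your argument instead constructs the witnesses explicitly: you identify $\bar{\mathcal{B}}_m(s)$ with the interval of leg lengths $[m-s,s-1]$, invoke the $t=2$ case of Lemma \ref{lem: diff} to reduce nonvanishing of $\mathcal{LR}(\lambda;\mu,\nu)$ to $x_1+x_2\in\{x-1,x\}$, and finish with a sumset computation (all the interval bookkeeping checks out, including the boundary case $x=t+t'-1$ and the non-emptiness of the intervals, which is exactly where the hypotheses $\tfrac{n}{2}<t$ and $\tfrac{n'}{2}<t'$ enter). The paper's route is shorter because it recycles Lemma \ref{lem: B star} a second time and never touches the combinatorics of hooks beyond Lemma \ref{lem: hook-hooks}; yours is more self-contained on the hook side and has the added benefit of exhibiting precisely which pairs $(\mu,\nu)$ realize a given $\lambda$, at the cost of more explicit case analysis.
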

\begin{proof}
	By definition, we have $\bar{\mathcal{B}}_n(t)\subseteq \mathcal{B}_n(t)$ and $\bar{\mathcal{B}}_{n'}(t')\subseteq \mathcal{B}_{n'}(t')$. Hence 
	\begin{equation*}
		\begin{split} \bar{\mathcal{B}}_n(t) \Diamond \bar{\mathcal{B}}_{n'}(t') &\subseteq \mathcal{B}_n(t) \Diamond \mathcal{B}_{n'}(t') = \left( \mathcal{B}_n(t) \star \mathcal{B}_{n'}(t') \right) \cap \mathcal{H}(n+n') \\&= \mathcal{B}_{n+n'}(t+t') \cap \mathcal{H}(n+n') = \bar{\mathcal{B}}_{n+n'}(t+t'), \end{split} 
	\end{equation*}
	where the second equality holds by Lemma \ref{lem: B star}.
	To prove the other inclusion it is enough to show that $\mathcal{B}_n(t) \Diamond \mathcal{B}_{n'}(t') \subseteq \bar{\mathcal{B}}_n(t) \Diamond \bar{\mathcal{B}}_{n'}(t')$.
	Let $\lambda \in \mathcal{B}_n(t) \Diamond \mathcal{B}_{n'}(t')$, then $\lambda \in \mathcal{H}(n+n')$ and there exist $\mu \in \mathcal{B}_n(t),\ \mu' \in \mathcal{B}_{n'}(t')$ such that $\mathcal{LR}(\lambda ; \mu, \mu') \neq 0$. However by Lemma \ref{lem: hook-hooks}, $\mu$ and $\mu'$ have to be hook partitions. Therefore $\mu \in \bar{\mathcal{B}}_n(t)$, $\mu' \in \bar{\mathcal{B}}_{n'}(t')$ and $\lambda \in \bar{\mathcal{B}}_n(t) \Diamond \bar{\mathcal{B}}_{n'}(t')$. 
\end{proof}

\begin{lemma}\label{lem: O_diamond_2power}
	Let $n\geq 2$ and $i,j\in \left[0, \alpha_{2^{n-1}}\right]$ be such that $i\neq j$. Then
	\[\mathcal{H}_{2^{n-1}}^i \Diamond \mathcal{H}_{2^{n-1}}^j \subseteq \mathcal{H}_{2^n}^{i+j+1} .\]
\end{lemma}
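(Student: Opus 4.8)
The plan is to argue exactly as in the proof of Theorem \ref{thm: inclusion}: unwind the definition of $\Diamond$ and then push a pair of irreducible constituents of Sylow restrictions up through the wreath product structure $P_{2^n}=B\rtimes P_2$, where $B\cong P_{2^{n-1}}\times P_{2^{n-1}}$. First I would take $\lambda\in\mathcal{H}_{2^{n-1}}^i\Diamond\mathcal{H}_{2^{n-1}}^j$, so that $\lambda\in\mathcal{H}(2^n)$ and there exist $\mu\in\mathcal{H}_{2^{n-1}}^i$ and $\nu\in\mathcal{H}_{2^{n-1}}^j$ with $\mathcal{LR}(\lambda;\mu,\nu)\neq 0$. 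By the definition of the sets $\mathcal{H}_{2^{n-1}}^i$ and $\mathcal{H}_{2^{n-1}}^j$, I can choose an irreducible constituent $\theta_1$ of $\chi^\mu\down_{P_{2^{n-1}}}$ with $\theta_1(1)=2^i$ and an irreducible constituent $\theta_2$ of $\chi^\nu\down_{P_{2^{n-1}}}$ with $\theta_2(1)=2^j$.

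Since the hypothesis $i\neq j$ forces $\theta_1\neq\theta_2$, case (i) of the description of $\Irr(G\wr C_2)$ recalled in Section \ref{sec: wr} gives that $\eta:=(\theta_1\times\theta_2)\up^{P_{2^n}}_{B}$ is irreducible, of degree $2\,\theta_1(1)\theta_2(1)=2^{i+j+1}$. It then remains to see that $\eta$ is a constituent of $\chi^\lambda\down_{P_{2^n}}$. Choosing a Young subgroup $Y=\fS_{2^{n-1}}\times\fS_{2^{n-1}}$ of $\fS_{2^n}$ with $B\leq Y$, Frobenius reciprocity gives
\[ [\chi^\lambda\down_{P_{2^n}},\eta]=[\chi^\lambda\down_{B},\theta_1\times\theta_2]=[(\chi^\lambda\down_{Y})\down_{B},\theta_1\times\theta_2]. \]
Now $\chi^\lambda\down_Y=\sum_{\sigma,\tau}\mathcal{LR}(\lambda;\sigma,\tau)\,\chi^\sigma\times\chi^\tau$, and restricting each tensor factor to $P_{2^{n-1}}$ produces only non-negative multiplicities, so the right-hand side is at least $\mathcal{LR}(\lambda;\mu,\nu)\cdot[\chi^\mu\down_{P_{2^{n-1}}},\theta_1]\cdot[\chi^\nu\down_{P_{2^{n-1}}},\theta_2]>0$. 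Hence $\eta$ is an irreducible constituent of $\chi^\lambda\down_{P_{2^n}}$ of degree $2^{i+j+1}$, so $\lambda\in\mathcal{H}_{2^n}^{i+j+1}$ (and in particular $2^{i+j+1}\in\cd(P_{2^n})$), which is the desired inclusion.

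I do not expect a real obstacle: the argument is a streamlined version of part of the proof of Theorem \ref{thm: inclusion}. The two points worth stating carefully are that $i\neq j$ is used precisely to guarantee $\theta_1\neq\theta_2$, which is what makes the induced character $\eta$ irreducible (rather than a proper sum), and that passing from $\chi^\lambda\down_Y$ to $\chi^\lambda\down_B$ can only add constituents, so the non-vanishing of the single Littlewood--Richardson coefficient $\mathcal{LR}(\lambda;\mu,\nu)$ together with the non-vanishing of the two local multiplicities already forces $[\chi^\lambda\down_{P_{2^n}},\eta]>0$.
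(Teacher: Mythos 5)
Your proof is correct and follows essentially the same route as the paper's: extract $\mu,\nu$ with $\mathcal{LR}(\lambda;\mu,\nu)\neq 0$, pick constituents of degrees $2^i$ and $2^j$, and use $i\neq j$ to conclude that the induced character from the base group $B\cong P_{2^{n-1}}\times P_{2^{n-1}}$ is irreducible of degree $2^{i+j+1}$. The only difference is that you spell out the Frobenius reciprocity and Young-subgroup restriction step that the paper leaves implicit, which is a harmless (and welcome) elaboration.
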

\begin{proof}
	Let $\lambda \in \mathcal{H}_{2^{n-1}}^i \Diamond \mathcal{H}_{2^{n-1}}^j$. By definition $\lambda \in \mathcal{H}(2^n)$ and there exist $\mu \in \mathcal{H}_{2^{n-1}}^i,\ \nu \in \mathcal{H}_{2^{n-1}}^j $ such that $\mathcal{LR}(\lambda; \mu, \nu)\neq 0$.
	Hence there exist $\phi, \psi \in \Irr(P_{2^{n-1}})$ such that $\left[ \phi, \chi^\mu \down_{P_{2^{n-1}}} \right]\neq 0$, $\left[\psi, \chi^\nu \down_{P_{2^{n-1}}}\right]\neq 0$ and $\phi(1)=2^i$, $\psi(1)=2^j$.
	Since $i\neq j$, we have that $\theta=\left( \phi \times \psi \right) \up^{P_{2^n}}\in \Irr(P_{2^n})$. Moreover, $\theta(1)=2^{i+j+1}$ and $\left[\theta , \chi^\lambda \down_{P_{2^n}}\right]\neq 0$. It follows that $\lambda \in \mathcal{H}_{2^n}^{i+j+1}$.
\end{proof}

\begin{lemma}\label{lem: O_diamond_general}
	Let $n\in \mathbb{N}$ and let $n=\sum_{i=1}^r 2^{n_i}$ be its binary expansion. Suppose that for every $i=1, \dots , r$, $j_i \in \left[0, \alpha_{2^{n_i}}\right]$ is such that $j_1+ \cdots +j_r =k\in \left[0, \alpha_n \right]$. Then
	\[ \mathcal{H}_{2^{n_1}}^{j_1} \Diamond \cdots \Diamond \mathcal{H}_{2^{n_r}}^{j_r} \subseteq \mathcal{H}_n^k .\]
\end{lemma}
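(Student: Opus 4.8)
The plan is to reduce the general statement to the two-component case handled in Lemma \ref{lem: O_diamond_2power}, using the combinatorial structure of the groups $P_n=P_{2^{n_1}}\times\cdots\times P_{2^{n_r}}$ and the associativity of the $\star$ (hence $\Diamond$) operation. First I would unwind the definition of $\Diamond$: if $\lambda\in\mathcal{H}_{2^{n_1}}^{j_1}\Diamond\cdots\Diamond\mathcal{H}_{2^{n_r}}^{j_r}$, then $\lambda\in\mathcal{H}(n)$ and by associativity there exist hook partitions $\mu_i\in\mathcal{H}_{2^{n_i}}^{j_i}$ for each $i$ with $\mathcal{LR}(\lambda;\mu_1,\dots,\mu_r)\neq 0$. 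For each $i$ the membership $\mu_i\in\mathcal{H}_{2^{n_i}}^{j_i}$ gives an irreducible constituent $\phi_i$ of $\chi^{\mu_i}\down_{P_{2^{n_i}}}$ with $\phi_i(1)=2^{j_i}$.

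The key point is then simply that $\phi:=\phi_1\times\cdots\times\phi_r$ is an irreducible character of $P_n=P_{2^{n_1}}\times\cdots\times P_{2^{n_r}}$ of degree $2^{j_1+\cdots+j_r}=2^k$, and that $\phi$ is an irreducible constituent of $\chi^\lambda\down_{P_n}$. The latter follows because $\chi^\lambda\down_Y$ contains $\chi^{\mu_1}\times\cdots\times\chi^{\mu_r}$ with positive multiplicity (this is exactly $\mathcal{LR}(\lambda;\mu_1,\dots,\mu_r)\neq 0$, where $Y=\fS_{(2^{n_1},\ldots,2^{n_r})}$ contains $P_n$), and restricting further from $Y$ to $P_n$ the constituent $\phi_1\times\cdots\times\phi_r$ of $(\chi^{\mu_1}\times\cdots\times\chi^{\mu_r})\down_{P_n}$ survives. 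Hence $Z^\lambda_\phi\neq 0$ with $\phi(1)=2^k$, which is precisely the statement $\lambda\in\mathcal{H}_n^k$.

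I do not expect any real obstacle here: unlike Lemma \ref{lem: O_diamond_2power}, there is no ``$i\neq j$'' hypothesis to worry about, because over a direct product we take an external tensor product $\phi_1\times\cdots\times\phi_r$ rather than an induced character, so no cancellation or reducibility issue arises and the degrees simply multiply. The only thing to be a little careful about is the bookkeeping of associativity of $\Diamond$ and the fact that Lemma \ref{lem: hook-hooks} forces all the intermediate partitions appearing in a nonzero Littlewood--Richardson coefficient with hook outer shape to themselves be hooks — but this is already built into the definition of $\Diamond$ and was used in the proof of Lemma \ref{lem: B-diamond}. So the proof is essentially a two-line argument: extract the $\mu_i$ and $\phi_i$, form the external product, and invoke transitivity of restriction together with $P_n\leq Y$.
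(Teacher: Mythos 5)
Your proposal is correct and follows essentially the same route as the paper's proof: unwind the definition of the iterated $\Diamond$ to obtain hooks $\mu_i\in\mathcal{H}_{2^{n_i}}^{j_i}$ with $\mathcal{LR}(\lambda;\mu_1,\dots,\mu_r)\neq 0$, pick constituents $\phi_i$ of $\chi^{\mu_i}\down_{P_{2^{n_i}}}$ of degree $2^{j_i}$, and observe that the external product $\phi_1\times\cdots\times\phi_r$ is a degree-$2^k$ constituent of $\chi^\lambda\down_{P_n}$ via $P_n\leq \fS_{2^{n_1}}\times\cdots\times\fS_{2^{n_r}}\leq\fS_n$. Your additional remarks on associativity and on why no $i\neq j$ hypothesis is needed are accurate but not required.
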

\begin{proof}
	Let $\lambda \in \mathcal{H}_{2^{n_1}}^{j_1} \Diamond \cdots \Diamond \mathcal{H}_{2^{n_r}}^{j_r} $. By definition, for every $i=1, \dots ,r$ there exists $\mu_i \in \mathcal{H}_{2^{n_i}}^{j_i} $ such that $\mathcal{LR}(\lambda; \mu_1, \dots , \mu_r) \neq 0$. Hence there exists an irreducible constituent $\phi_i$ of $\chi^{\mu_i} \down_{P_{2^{n_i}}} $ of degree $2^{j_i}$ for every $i=1, \dots ,r$. Since $P_n=P_{2^{n_1}}\times\cdots\times P_{2^{n_r}}\leq \fS_{2^{n_1}}\times\cdots\times \fS_{2^{n_r}}\leq\fS_n$, it follows that $\phi_1 \times \cdots \times \phi_r$ is an irreducible constituent of $\chi^\lambda \down_{P_{n}}$ of degree $2^k$. Hence $\lambda \in \mathcal{H}_n^k$.
\end{proof}

We are now ready to state the second main result of the section. In particular, we are able to show that the sets $\mathcal{H}_{n}^k$ have a very regular structure. In Theorem \ref{thm: 2_power} we first deal with the case where $n$ is a power of $2$. Then in Theorem \ref{thm: generic_case} we show that for any $n\in\mathbb{N}$ and any $k\in [0,\alpha_n]$ there exists $T_n^k\in\mathbb{N}$ such that $\mathcal{H}_{n}^k=\bB_{n}(T_n^k)$.

\begin{theorem} \label{thm: 2_power}
	Let $n \in \mathbb{N}$ and $k\in [0, \alpha_{2^n}]$. Then:
	\begin{itemize}
		\item[(1)] there exists $t_n^k \in \left[ 2^{n-1}+1, 2^n \right]$ such that $\mathcal{H}_{2^n}^k = \bB_{2^n}(t_n^k)$;
		\item[(2)] if $k>1$, then for every $\lambda \in \bB_{2^n}(t_n^k -1)$, $\chi^\lambda \down_{P_{2^n}}$ has at least three distinct irreducible constituents of degree $2^k$.
	\end{itemize}
\end{theorem}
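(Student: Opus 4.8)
The plan is to prove both statements simultaneously by induction on $n\geq 2$, using the wreath product structure $P_{2^n}=(P_{2^{n-1}}\times P_{2^{n-1}})\rtimes P_2$ and restricting through a Young subgroup $Y\cong\fS_{2^{n-1}}\times\fS_{2^{n-1}}$. The base cases $n=2,3$ are handled by the explicit computations in Example \ref{ex: n=1,2,3}. For the inductive step, I would first observe that for $k=0$ we have $\mathcal{H}_{2^n}^0=\mathcal{H}(2^n)=\bB_{2^n}(2^n)$ by Theorem \ref{thm: 4.3}, so $t_n^0=2^n$; and for $k=1$, Lemma \ref{lem: a_n^x} together with the remark following it gives $\mathcal{H}_{2^n}^1=\bB_{2^n}(2^n-1)$, so $t_n^1=2^n-1$. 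These two cases require no part (2) claim. So assume $k\geq 2$.

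For the upper bound (showing $\mathcal{H}_{2^n}^k$ is contained in some $\bB_{2^n}(t)$ with the claimed $t\geq 2^{n-1}+1$): if $\lambda\in\mathcal{H}_{2^n}^k$, then $\chi^\lambda\down_{P_{2^n}}$ has a constituent $\theta$ of degree $2^k$, which is either of the form $\mathcal{X}(\psi;\alpha)$ with $\psi\in\Irr(P_{2^{n-1}})$ of degree $2^{k/2}$ (so $k$ even), or $(\theta_1\times\theta_2)\up^{P_{2^n}}$ with $\theta_i(1)=2^{h_i}$, $h_1+h_2=k-1$, $\theta_1\neq\theta_2$. In the first case there are $\mu,\nu\in\mathcal{H}(2^{n-1})$ with $\mathcal{LR}(\lambda;\mu,\nu)\neq 0$ and $\mu,\nu\in\mathcal{H}_{2^{n-1}}^{k/2}$; in the second, $\mu_i\in\mathcal{H}_{2^{n-1}}^{h_i}$. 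By the inductive hypothesis, $\mathcal{H}_{2^{n-1}}^{h_i}=\bB_{2^{n-1}}(t_{n-1}^{h_i})$ with each $t_{n-1}^{h_i}\geq 2^{n-2}+1>2^{n-1}/2$, so Lemma \ref{lem: hook-hooks} and Lemma \ref{lem: B-diamond} (or rather its underlying computation, Lemma \ref{lem: B star}) give $\lambda\in\bB_{2^n}(t_{n-1}^{h_1}+t_{n-1}^{h_2})$. Taking the maximum over all valid decompositions $(h_1,h_2)$ of $k-1$ (and, for even $k$, over the option $2t_{n-1}^{k/2}$) defines a candidate value $t_n^k$; I would argue that one particular decomposition is optimal — intuitively $h_1=k-1$, $h_2=0$ when $k-1\le\alpha_{2^{n-1}}$, using monotonicity $t_{n-1}^0\geq t_{n-1}^1\geq\cdots$ (which is exactly Theorem \ref{thm: inclusion} rephrased via the inductive form of part (1)) — and check $t_n^k\geq 2^{n-1}+1$ from $t_{n-1}^0=2^{n-1}$ (when the $h_2=0$ decomposition is available) or by a direct estimate otherwise.

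For the lower bound (every $\lambda\in\bB_{2^n}(t_n^k)$ lies in $\mathcal{H}_{2^n}^k$) and simultaneously part (2): given $\lambda\in\bB_{2^n}(t_n^k-1)$, I want to exhibit at least three distinct degree-$2^k$ constituents of $\chi^\lambda\down_{P_{2^n}}$; given $\lambda\in\bB_{2^n}(t_n^k)\setminus\bB_{2^n}(t_n^k-1)$ I need at least one. Using the optimal decomposition $t_n^k=t_{n-1}^{h_1}+t_{n-1}^{h_2}$, Lemma \ref{lem: B-diamond} writes $\bB_{2^n}(t_n^k)=\bB_{2^{n-1}}(t_{n-1}^{h_1})\Diamond\bB_{2^{n-1}}(t_{n-1}^{h_2})=\mathcal{H}_{2^{n-1}}^{h_1}\Diamond\mathcal{H}_{2^{n-1}}^{h_2}$, and then Lemma \ref{lem: O_diamond_2power} (when $h_1\neq h_2$) gives the containment in $\mathcal{H}_{2^n}^{h_1+h_2+1}=\mathcal{H}_{2^n}^k$. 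For the multiplicity-three statement I would restrict $\chi^\lambda$ to $Y$, find enough distinct pairs $(\mu_1,\mu_2)$ of hook partitions with $\mathcal{LR}(\lambda;\mu_1,\mu_2)\neq 0$ and $\mu_i\in\mathcal{H}_{2^{n-1}}^{h_i}$ (the Example computation shows there are many such pairs once $\lambda$ is not too close to $(2^n)$ or $(1^{2^n})$), and for each such pair use part (2) of the inductive hypothesis (which supplies three constituents of $\chi^{\mu_i}\down$ of the relevant degree) to build three distinct constituents of degree $2^k$ of $\chi^\lambda\down_{P_{2^n}}$ — being careful with the case $h_1=h_2$ where induced characters $(\phi\times\psi)\up$ can coincide, and with the small-degree case $k=2$ where one falls back on Lemma \ref{lem: a_n^x} directly.

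The main obstacle I anticipate is the bookkeeping for part (2): one must produce \emph{three} distinct irreducible constituents of degree exactly $2^k$ for \emph{every} $\lambda$ in the interior $\bB_{2^n}(t_n^k-1)$, and the natural constructions $(\phi\times\psi)\up^{P_{2^n}}$ and $\mathcal{X}(\psi;\alpha)$ can collide (e.g. $(\phi\times\psi)\up=(\psi\times\phi)\up$) so one needs either genuinely different partition pairs $(\mu_1,\mu_2)$ or genuinely different constituents at the $P_{2^{n-1}}$ level, and verifying non-collision requires care — this is where the inductive hypothesis that $\bB_{2^n}(t_n^k-1)$ has \emph{three} such constituents (rather than one) is essential to make the induction close. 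A secondary subtlety is pinning down $t_n^k$ explicitly and checking the boundary inequality $t_n^k\in[2^{n-1}+1,2^n]$ in all the regimes of $k$ relative to $\alpha_{2^{n-1}}$ and $\alpha_{2^n}$.
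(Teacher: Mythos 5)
Your overall strategy (induction on $n$ and $k$, restriction through $Y\cong\fS_{2^{n-1}}\times\fS_{2^{n-1}}$, the $\Diamond$ operation via Lemmas \ref{lem: B-diamond} and \ref{lem: O_diamond_2power}, and defining $t_n^k$ as a maximum over decompositions of $k-1$) is the same as the paper's, and your observation that the strengthened ``three constituents'' hypothesis is what makes the induction close is exactly right. However, there is a genuine gap in how you identify $t_n^k$. When the maximum over decompositions is achieved by the equal-degree case $h_1=h_2=w:=\frac{k-1}{2}$, the value $2t_{n-1}^{w}$ is attained by a partition $\lambda$ with $\lambda_1=2t_{n-1}^{w}$ only if the \emph{extremal} hook $\lambda_{n-1}^{w}=(t_{n-1}^{w},1,\dots,1)$ admits \emph{two distinct} irreducible constituents of degree $2^{w}$: by Lemma \ref{lem: LR_prop} such a $\lambda$ forces $\mu_1=\mu_2=\lambda_{n-1}^{w}$, and $(\phi_1\times\phi_2)\up^{P_{2^n}}$ is irreducible only for $\phi_1\neq\phi_2$. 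The inductive part (2) guarantees three constituents only for partitions in the \emph{interior} $\bB_{2^{n-1}}(t_{n-1}^{w}-1)$, not for the extremal hook itself (and indeed at small levels the extremal hook can have a unique constituent of the top degree, e.g.\ $\chi^{(3,1)}\down_{P_4}$ has a single degree-$2$ constituent). So your candidate value $\max\{t_{n-1}^{h_1}+t_{n-1}^{h_2}\}$ can overshoot by $1$, and the inclusion $\bB_{2^n}(t_n^k)\subseteq\mathcal{H}_{2^n}^k$ would then fail on the boundary. The paper repairs this by inserting a correction term $\delta_{n-1}^{w}\in\{0,-1\}$ into the maximum, recording whether $\chi^{\lambda_{n-1}^{w}}\down_{P_{2^{n-1}}}$ has one or at least two constituents of degree $2^{w}$, and then verifying in the reverse inclusion that a boundary $\lambda$ with $\lambda_1=2t_{n-1}^{w}$ genuinely cannot lie in $\mathcal{H}_{2^n}^k$ when $\delta_{n-1}^{w}=-1$. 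Your plan does not track this invariant, and it is not recoverable from the data you carry through the induction.

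A secondary, more easily fixed issue: your claim that the maximum is ``intuitively'' attained at $(h_1,h_2)=(k-1,0)$ does not follow from the monotonicity $t_{n-1}^0\geq t_{n-1}^1\geq\cdots$; a decreasing sequence can have $t^{i}+t^{j}$ maximized at an interior decomposition (and for $k=\alpha_{2^n}$ the endpoint decomposition is not even admissible, since $k-1=2\alpha_{2^{n-1}}>\alpha_{2^{n-1}}$). The paper simply works with the maximum itself and never needs to identify which decomposition realizes it, which is the cleaner route.
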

\begin{proof}
	We proceed by induction on $n$. If $n\in \{1,2\}$ then the theorem holds (see Example \ref{ex: n=1,2,3}).
	For $n\geq 3$ we proceed by induction on $k$. By Theorem \ref{thm: GMcKay} we know that $\mathcal{H}_{2^n}^0=\bB_{2^n}(2^n)$ and hence that $t_n^0=2^n$.
	If instead $k=1$, by Lemma \ref{lem: a_n^x} we have
	that $\mathcal{H}_{2^n}^1 = \bB_{2^n}(2^n-1)$ and hence that $t_n^1=2^n-1$. 
	Suppose now that $k=2$. We want to show that $\mathcal{H}_{2^{n}}^2 = \bB_{2^n}(2^n -1)$. We have that 
	\[\bB_{2^n}(2^n-1)= \bB_{2^{n-1}}(2^{n-1}) \Diamond \bB_{2^{n-1}}(2^{n-1}-1) = \mathcal{H}_{2^{n-1}}^0 \Diamond \mathcal{H}_{2^{n-1}}^1 \subseteq \mathcal{H}_{2^n}^2 , \]
	where the first equality holds by Lemma \ref{lem: B-diamond} and where the last inclusion holds by Lemma \ref{lem: O_diamond_2power}.
	On the other hand, we clearly have that $\mathcal{H}_{2^n}^2 \subseteq \bB_{2^n}(2^n-1)$, because $(2^n), (1^{2^n})\notin \mathcal{H}_{2^n}^2$. 
%
To conclude, we need to show that for every $\lambda \in \bB_{2^{n}}(t_n^2-1)=\bB_{2^n}(2^n-2)$, $\chi^\lambda \down_{P_{2^n}}$ has three distinct irreducible constituents of degree $2^2$. If $\lambda_1 < 2^n-2$ then $\lambda \in \bB_{2^{n}}(2^n-3)= \bB_{2^{n-1}}(2^{n-1}-3) \Diamond \bB_{2^{n-1}}(2^{n-1})$, by Lemma \ref{lem: B-diamond}. Hence there exist $\mu \in \bB_{2^{n-1}}(2^{n-1}-3)$ and $\nu \in \bB_{2^{n-1}}(2^{n-1})$ such that $\mathcal{LR}(\lambda; \mu, \nu)\neq 0$. In particular, if $\mu_1= 2^{n-1}-x \leq 2^{n-1}-3$ then using Lemma \ref{lem: a_n^x}, we deduce that there exist three distinct irreducible constituents $\theta_1, \theta_2$ and $\theta_3$ of $\chi^\mu \down_{P_{2^{n-1}}}$ of degree $2$. On the other hand, $\nu \in \bB_{2^{n-1}}(2^{n-1})=\mathcal{H}_{2^{n-1}}^0$. Hence there exists a linear constituent $L$ of $\chi^\nu \down_{P_{2^{n-1}}}$. Therefore $\left( \theta_1 \times L \right) \up^{P_{2^n}}, \left( \theta_2 \times L \right) \up^{P_{2^n}}$ and $\left( \theta_3 \times L\right) \up^{P_{2^n}}$ are three distinct irreducible contituents of $\chi^\lambda \down_{P_{2^n}}$ of degree $2^2$.
	Consider now the case $\lambda_1 =2^n-2$, then $\mathcal{LR}(\lambda; (2^{n-1}-1,1), (2^{n-1}-1,1))\neq 0\neq \mathcal{LR}(\lambda; (2^{n-1}-2, 1^2), (2^{n-1}))$. By Lemma \ref{lem: hook-1}, we know that $\chi^{(2^{n-1}-1,1)} \down_{P_{2^{n-1}}}$ has an irreducible consituent $\theta $ of degree $2$. Hence there exists $\alpha\in\mathrm{Irr}(P_2)$ such that $\mathcal{X}(\theta; \alpha)$ is an irreducible constituent of $\chi^\lambda \down_{P_{2^n}}$ of degree $4$. 
	Moreover, by Lemma \ref{lem: a_n^x} we know that $\chi^{(2^{n-1}-2,1^2)} \down_{P_{2^{n-1}}}$ admits two distinct irreducible constituents $\psi_1$ and $\psi_2$ of degree $2$. We conclude that $\left(\psi_1 \times 1_{P_{2^{n-1}}} \right) \up^{P_{2^n}}$, $\left(\psi_2 \times 1_{P_{2^{n-1}}} \right) \up^{P_{2^n}}$ and $\mathcal{X}(\theta; \alpha)$ are three distinct irreducible constituents of $\chi^\lambda \down_{P_{2^n}}$ of degree $2^2$.

Let us now suppose that $k\geq 3$. From now on we will denote $w:=\frac{k-1}{2}$.
We define $M\in\mathbb{N}$ as follows. 
	\[M:=\max \Set{t_{n-1}^i +t_{n-1}^j , 2t_{n-1}^{w}+ \delta_{n-1}^{w} | i,j,w \in \left[0, \alpha_{2^{n-1}} \right],\ i+j=k-1 \mbox{ and } i\neq j }, \]
	where for $h\in \left[0, \alpha_{2^{n-1}} \right]$,
		\[ \begin{split} &t_{n-1}^h \mbox{ is defined inductively such that }\mathcal{H}_{2^{n-1}}^h = \bB_{2^{n-1}}(t_{n-1}^h); 
		\\ &\lambda_{n-1}^h := \left(t_{n-1}^h , 1, \dots , 1 \right) \in \bB_{2^{n-1}}(t_{n-1}^h); \mbox{ and}
		\\ &\delta_{n-1}^h := \begin{cases}
			0, &\mbox{ if } \chi^{\lambda_{n-1}^h} \down_{P_{2^{n-1}}} \mbox{ has two distinct irreducible constituents of degree } 2^h;
			\\-1, &\mbox{ if } \chi^{\lambda_{n-1}^h} \down_{P_{2^{n-1}}} \mbox{ has a unique irreducible constituent of degree } 2^h.
		\end{cases} \end{split}\]
	We will now show that $M=t_n^k$, or equivalently that $\mathcal{H}_{2^n}^k = \bB_{2^n}(M)$. 
	To show that $\mathcal{H}_{2^n}^k \supseteq \bB_{2^n}(M)$, we need to split our discussion into three cases, depending on the value $M$.
	\begin{enumerate}
		\item First, let us suppose that $M=t_{n-1}^i +t_{n-1}^j$, for some $i,j  \in \left[0, \alpha_{2^{n-1}} \right]$, $i+j=k-1$ and $i\neq j$.
	We have
	\[\bB_{2^n}(M)=\bB_{2^{n-1}}(t_{n-1}^i) \Diamond \bB_{2^{n-1}}(t_{n-1}^j) = \mathcal{H}_{2^{n-1}}^i \Diamond \mathcal{H}_{2^{n-1}}^j \subseteq \mathcal{H}_{2^n}^k , \]
	respectively by Lemma \ref{lem: B-diamond}, inductive hypothesis and Lemma \ref{lem: O_diamond_2power}.
	Moreover, since $k \geq 3$, without loss of generality we can assume that $i>1$. Hence $\bB_{2^n}(M-1) = \bB_{2^{n-1}}(t_{n-1}^i-1) \Diamond \bB_{2^{n-1}}(t_{n-1}^j)$ by Lemma \ref{lem: B-diamond}. If $\lambda \in \bB_{2^n}(M-1)$, then there exist $\mu \in \bB_{2^{n-1}}(t_{n-1}^i-1)$ and $\nu \in \bB_{2^{n-1}}(t_{n-1}^j)$ such that $\mathcal{LR}(\lambda; \mu, \nu)\neq 0$. By inductive hypothesis there exist three distinct irreducible constituents $\theta_1$, $\theta_2$ and $\theta_3$ of $\chi^\mu \down_{P_{2^{n-1}}}$ of degree $2^i$, and by definition there exists an irreducible constituent $\psi$ of $\chi^\nu \down_{P_{2^{n-1}}}$ of degree $2^j$. It follows that $(\theta_1 \times \psi) \up^{P_{2^n}}$, $(\theta_2 \times \psi) \up^{P_{2^n}}$ and $(\theta_3 \times \psi) \up^{P_{2^n}}$ are three distinct irreducible constituents of $\chi^\lambda \down_{P_{2^n}}$ of degree $2^k$.

\item For the second case we assume that $\delta_{n-1}^{w}=0$ and $M=2 t_{n-1}^{w}$. 
In this setting we observe that $k$ must be strictly greater than $3$. In fact, if $k=3$ then Example \ref{ex: n=1,2,3} and Lemma \ref{lem: a_n^x} show that 
$M=2t_{n-1}^1=2(2^{n-1}-1)=2^{n}-2$. On the other hand, by definition of $M$ we know that $M=\max \Set{t_{n-1}^0 + t_{n-1}^2 , 2^n-2}=t_{n-1}^0 + t_{n-1}^2=2^{n-1}+ 2^{n-1}-1 = 2^n-1$ and this is a contradiction. Hence we can assume that $k>3$. Let $\lambda \in \bB_{2^n}(M)$.
	If $\lambda_1=M$, then $\mathcal{LR}\left(\lambda; \lambda_{n-1}^w , \lambda_{n-1}^{w} \right)\neq 0$. Recall that $\delta_{n-1}^{w}=0$ means that $\chi^{\lambda_{n-1}^w \down_{P_{2^{n-1}}}}$ has two distinct irreducible constituents $\theta_1$ and $\theta_2$ of degree $2^{w}$. Hence $(\theta_1 \times \theta_2) \up^{P_{2^n}}$ is an irreducible constituent of $\chi^{\lambda} \down_{P_{2^n}}$ of degree $2^k$ and therefore $\lambda \in \mathcal{H}_{2^n}^k$.
			
	If $t_{n-1}^{w} \leq \lambda_1 <M$, then $\mathcal{LR}\left(\lambda; \lambda_{n-1}^{w}, \mu \right)\neq 0$ for some $\mu \in \mathcal{H}(2^{n-1})$, with $\mu_1=\lambda_1 - t_{n-1}^{w} < M-t_{n-1}^{w} =t_{n-1}^{w}$. In particular, $\mu \in \bB_{2^{n-1}}\left(t_{n-1}^{w} -1\right)$. Since $w=\frac{k-1}{2} >1$, by inductive hypothesis there exist three distinct irreducible constituents $\psi_1, \psi_2$ and $\psi_3$ of $\chi^\mu \down_{P_{2^{n-1}}}$ of degree $2^{w}$. 			
	Let $\theta_1$ and $\theta_2$ be as in the previous case, and suppose without loss of generality that $\theta_1 \notin \Set{\psi_2, \psi_3}$ and that $\theta_2 \neq \psi_1$. 
	Then $(\theta_1 \times \psi_2) \up^{P_{2^n}}, (\theta_1 \times \psi_3)\up^{P_{2^n}}$ and $(\theta_2 \times \psi_1)\up^{P_{2^n}}$ are three distinct irreducible constituents of $\chi^\lambda \down_{P_{2^n}}$ of degree $2^k$. In particular, $\lambda \in \mathcal{H}_{2^n}^k$.
			
	Suppose now that $1 \leq \lambda_1 < t_{n-1}^{w}$, then we have 
	\[\begin{split} (\lambda')_1 &=2^n+1-\lambda_1 > 2^n+1-t_{n-1}^{w} \geq 2^n+1-2^{n-1} \\&=2^{n-1}+1 \geq t_{n-1}^{\frac{k+1}{2}} +1 > t_{n-1}^{\frac{k+1}{2}}. \end{split} \]
	Since $\lambda' \in \bB_{2^n}(M)$, from the previous case we deduce that $\lambda'\in\mathcal{H}_{2^n}^k$ and we conclude that $\lambda\in\mathcal{H}_{2^n}^k$, as $\mathcal{H}_{2^n}^k$ is closed under conjugation of partitions.

\item Finally consider the case where $\delta_{n-1}^{w}=-1$ and $M=2t_{n-1}^{w}-1$.	Arguing exactly as above we observe that $k> 3$ and hence that $w=\frac{k-1}{2}>1$. Moreover, in this case we have that $\chi^{\lambda_{n-1}^{w}} \down_{P_{2^{n-1}}}$ has a unique irreducible constituent $\psi$ of degree $2^{w}$. Let us fix $\lambda \in \bB_{2^n}(M)$.
	If $\lambda_1=M$, then $\mathcal{LR}\left(\lambda; \lambda_{n-1}^{w} , \mu \right)\neq 0$, for some $\mu \in \bB_{2^{n-1}}\left(t_{n-1}^{w}-1\right)$. Indeed $\bB_{2^n}(M) = \bB_{2^{n-1}}\left(t_{n-1}^{w}\right) \Diamond \bB_{2^{n-1}}\left(t_{n-1}^{w}-1\right)$, by Lemma \ref{lem: B-diamond}. Using the inductive hypothesis on $\mu$, we have that there exist three distinct irreducible constituents $\theta_1, \theta_2$ and $\theta_3$ of $\chi^\mu \down_{P_{2^{n-1}}}$ of degree $2^{w}$. Without loss of generality we can suppose that $\psi \neq \theta_1$, hence $(\psi \times \theta_1)\up^{P_{2^n}}$ is an irreducible constituent of $\chi^\lambda \down_{P_{2^n}}$ of degree $2^k$. Therefore $\lambda \in \mathcal{H}_{2^n}^k$.
				
	If instead $\lambda_1 < M $, by Lemma \ref{lem: B-diamond} we have
	\[\lambda \in \bB_{2^n}(M-1) = \bB_{2^{n-1}}\left(t_{n-1}^{w} -1 \right) \Diamond \bB_{2^{n-1}}\left(t_{n-1}^{w} -1 \right) .\]
	Hence there exist $\mu, \nu \in \bB_{2^{n-1}}\left(t_{n-1}^{w} -1 \right)$ such that $\mathcal{LR}(\lambda; \mu, \nu)\neq 0$. By induction there exist three distinct irreducible constituents $\theta_1, \theta_2$ and $\theta_3$ of $\chi^\mu \down_{P_{2^{n-1}}}$ and three distinct ones $\sigma_1, \sigma_2$ and $\sigma_3$ of $\chi^\nu \down_{P_{2^{n-1}}}$, all of them of degree $2^{w}$. Without loss of generality we can suppose that $\theta_1 \notin \Set{\sigma_2, \sigma_3}$ and $\theta_2 \neq \sigma_3$. Hence $(\theta_1 \times \sigma_2) \up^{P_{2^n}}, (\theta_1 \times \sigma_3) \up^{P_{2^n}}$ and $(\theta_2 \times \sigma_3) \up^{P_{2^n}}$ are three distinct irreducible constituents of $\chi^\lambda \down_{P_{2^n}}$ of degree $2^k$. In particular, $\lambda \in \mathcal{H}_{2^n}^k$.
			
\end{enumerate}
		
\vspace{0.3cm}

	We need now to prove that $\mathcal{H}_{2^n}^k \subseteq \bB_{2^n}(M)$. To do this, suppose by contradiction that there exists $\lambda \in \mathcal{H}_{2^n}^k \smallsetminus \bB_{2^n}(M)$ and without loss of generality suppose also that $\lambda_1 \geq M+1$.
	By definition there exists $\phi \in \Irr(P_{2^n})$ such that $\left[\chi^\lambda \down_{P_{2^n}}, \phi \right]\neq 0$ and $\phi(1)=2^k$.
	\begin{enumerate}
	\item Suppose first that $\phi= \mathcal{X}(\psi, \alpha)$ with $\psi \in \Irr(P_{2^{n-1}})$, $\psi(1)=2^{\frac{k}{2}}$ and $\alpha \in \Lin(P_2)$. Hence there exist $\mu, \nu \in \mathcal{H}(2^{n-1})$ such that $\mathcal{LR}(\lambda; \mu, \nu)\neq 0$ and $\psi$ is both an irreducible constituent of $\chi^\mu \down_{P_{2^{n-1}}}$ and of $\chi^\nu \down_{P_{2^{n-1}}}$. Therefore $\mu, \nu \in \mathcal{H}_{2^{n-1}}^{\frac{k}{2}} $.
	By inductive hypothesis and by Theorem \ref{thm: inclusion}, we have
	\[\bB_{2^{n-1}}\left(t_{n-1}^{\frac{k}{2}}\right)=\mathcal{H}_{2^{n-1}}^{\frac{k}{2}} \subseteq \mathcal{H}_{2^{n-1}}^{\frac{k}{2} -1} = \bB_{2^{n-1}}\left(t_{n-1}^{\frac{k}{2} -1}\right). \]
	In particular, $t_{n-1}^{\frac{k}{2}} \leq t_{n-1}^{\frac{k}{2} -1}$ and this implies that \[2t_{n-1}^{\frac{k}{2}} \leq t_{n-1}^{\frac{k}{2}}+t_{n-1}^{\frac{k}{2} -1} \leq M.\]
	This inequality gives that $\bB_{2^n}\left(2t_{n-1}^{\frac{k}{2}}\right) \subseteq \bB_{2^n}(M)$. 
	Since $\mu, \nu \in \mathcal{H}_{2^{n-1}}^{\frac{k}{2}} = \bB_{2^{n-1}}\left(t_{n-1}^{\frac{k}{2}}\right) $, we have $\lambda \in \bB_{2^{n-1}}\left(t_{n-1}^{\frac{k}{2}}\right) \Diamond \bB_{2^{n-1}}\left(t_{n-1}^{\frac{k}{2}}\right)$. Using Lemma \ref{lem: B-diamond} we conclude that
	\[\lambda \in \bB_{2^{n-1}}\left(t_{n-1}^{\frac{k}{2}}\right) \Diamond \bB_{2^{n-1}}\left(t_{n-1}^{\frac{k}{2}}\right) = \bB_{2^n}\left(2t_{n-1}^{\frac{k}{2}}\right) \subseteq \bB_{2^n}(M).\]
	This is a contradiction as $\lambda_1 \geq M+1$.
		
	\item Suppose instead that $\phi=(\phi_1 \times \phi_2) \up^{P_{2^n}}$, where $\phi_1, \phi_2 \in \Irr(P_{2^{n-1}})$ and $\phi_1 \neq \phi_2$. Then there exist $\mu_1, \mu_2 \in \mathcal{H}(2^{n-1})$ such that $\mathcal{LR}(\lambda; \mu_1, \mu_2)\neq 0$ and $\phi_1$ (respectively $\phi_2$) is an irreducible constituent of $\mu_1 \down_{P_{2^{n-1}}}$ (respectively $\mu_2 \down_{P_{2^{n-1}}}$).
	We need to distinguish two cases: the first one holds when $\phi_1(1)=2^i$ and $\phi_2 (1)=2^j$ , with $i,j \in \left[0, \alpha_{2^{n-1}} \right]$, $i\neq j$ and $i+j=k-1$. 
	In this case $\mu_1 \in \mathcal{H}_{2^{n-1}}^i$ and $\mu_2 \in \mathcal{H}_{2^{n-1}}^j$. Hence, by inductive hypothesis, Lemma \ref{lem: B-diamond} and the definition of $M$ we get that 
	\[\lambda\in \mathcal{H}_{2^{n-1}}^i \Diamond \mathcal{H}_{2^{n-1}}^j = \bB_{2^{n-1}}\left(t_{n-1}^i\right) \Diamond \bB_{2^{n-1}}\left(t_{n-1}^j\right) = \bB_{2^{n
	}}\left(t_{n-1}^i + t_{n-1}^j\right) \subseteq \bB_{2^n}(M).\]
This is a contradiction as $\lambda_1 \geq M+1$.
		
	\item The last case to consider is the one where $\phi_1 \neq \phi_2$ but $\phi_1(1)=\phi_2(1)=2^{w}$. In this setting we have that $\mu_1, \mu_2 \in \mathcal{H}_{2^{n-1}}^{w}$. By inductive hypothesis $\mathcal{H}_{2^{n-1}}^{w}=\bB_{2^{n-1}}\left(t_{n-1}^{w}\right)$. In particular, $(\mu_1)_1, (\mu_2)_1 \leq t_{n-1}^{w}$.
	Hence we have
		\begin{equation} \label{eq: thm_last case}
			2 t_{n-1}^{w} \leq M+1 \leq \lambda_1 \leq (\mu_1)_1 + (\mu_2)_1 \leq 2 t_{n-1}^{w},
		\end{equation}
		where the first equality holds by definition of $M$, the second one by assumption and the third one by Lemma \ref{lem: LR_prop}. Therefore (\ref{eq: thm_last case}) is a chain of equalities and in particular, $(\mu_1)_1 = (\mu_2)_1 = t_{n-1}^{w}$. Hence $\mu_1=\mu_2= \lambda_{n-1}^{w}$. Then $\delta_{n-1}^{w}=0$ since by assumption $\phi_1$ and $\phi_2$ are two distinct irreducible constituents of $\lambda_{n-1}^{w} \down_{P_{2^{n-1}}}$ of degree $2^{w}$.
	By definition, \[M=\max \Set{t_{n-1}^i + t_{n-1}^j, 2t_{n-1}^{w} | i,j,w \in \left[0, \alpha_{2^{n-1}} \right],\ i+j=k-1 \mbox{ and } i\neq j }.\] Since (\ref{eq: thm_last case}) is a chain of equalities we get $M+1=\lambda_1=2t_{n-1}^{w}\leq M$, which is  a contradiction. 
	\end{enumerate}
\end{proof}

\begin{theorem}\label{thm: generic_case}
		Let $n \in \mathbb{N}$ and $k\in [0, \alpha_{n}]$. Then:
	\begin{itemize}
		\item[(1)] there exists $T_n^k \in [1,n]$ such that $\mathcal{H}_{n}^k = \bB_{n}(T_n^k)$;
		\item[(2)] if $k>1$, for every $\lambda \in \bB_{n}(T_n^k -1)$, $\chi^\lambda \down_{P_{n}}$ has three distinct irreducible constituents of degree $2^k$. 
	\end{itemize}
\end{theorem}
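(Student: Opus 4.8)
The plan is to mirror the proof of Theorem \ref{thm: 2_power}, reducing the general case to the (already settled) $2$-power case via the binary expansion of $n$ together with the $\Diamond$-calculus of Lemmas \ref{lem: B-diamond}--\ref{lem: O_diamond_general}. Write $n=\sum_{i=1}^r 2^{n_i}$ with $n_1>\cdots>n_r\ge 0$, so that $P_n=P_{2^{n_1}}\times\cdots\times P_{2^{n_r}}$; any $\phi\in\Irr(P_n)$ of degree $2^k$ has the form $\phi=\phi_1\times\cdots\times\phi_r$ with $\phi_i\in\Irr(P_{2^{n_i}})$, $\phi_i(1)=2^{j_i}$ and $j_1+\cdots+j_r=k$. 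By Theorem \ref{thm: 2_power} one may write $\mathcal{H}_{2^{n_i}}^{\,j}=\bB_{2^{n_i}}(t_{n_i}^{\,j})$ for every admissible $j$, and I set
\[T_n^k:=\max\Set{ \sum_{i=1}^r t_{n_i}^{\,j_i} | j_i\in[0,\alpha_{2^{n_i}}],\ \sum_{i=1}^r j_i=k }.\]

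First I would establish $\mathcal{H}_n^k=\bB_n(T_n^k)$. For the inclusion $\subseteq$: given $\lambda\in\mathcal{H}_n^k$, decompose a degree-$2^k$ constituent of $\chi^\lambda\down_{P_n}$ as above; this forces hook partitions $\mu_i\in\mathcal{H}_{2^{n_i}}^{\,j_i}$ with $\mathcal{LR}(\lambda;\mu_1,\dots,\mu_r)\neq 0$, hence $\lambda\in\mathcal{H}_{2^{n_1}}^{\,j_1}\Diamond\cdots\Diamond\mathcal{H}_{2^{n_r}}^{\,j_r}=\bB_{2^{n_1}}(t_{n_1}^{\,j_1})\Diamond\cdots\Diamond\bB_{2^{n_r}}(t_{n_r}^{\,j_r})=\bB_n(\sum_i t_{n_i}^{\,j_i})\subseteq\bB_n(T_n^k)$, where the middle identity is Lemma \ref{lem: B-diamond} applied $r-1$ times (its hypotheses hold since $2^{n_i-1}<t_{n_i}^{\,j_i}\le 2^{n_i}$ by Theorem \ref{thm: 2_power}, these being preserved under iteration). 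For the reverse inclusion, fix a tuple $(j_1,\dots,j_r)$ realising the maximum; then $\bB_n(T_n^k)=\bB_{2^{n_1}}(t_{n_1}^{\,j_1})\Diamond\cdots\Diamond\bB_{2^{n_r}}(t_{n_r}^{\,j_r})=\mathcal{H}_{2^{n_1}}^{\,j_1}\Diamond\cdots\Diamond\mathcal{H}_{2^{n_r}}^{\,j_r}\subseteq\mathcal{H}_n^k$ by Lemmas \ref{lem: B-diamond} and \ref{lem: O_diamond_general}. Finally $T_n^k\le\sum_i 2^{n_i}=n$ and $T_n^k\ge t_{n_1}^{\,j_1}\ge 1$, so $T_n^k\in[1,n]$.

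For part (2), fix a maximizing tuple $(j_1,\dots,j_r)$ and argue first that when $k\ge 2$ some $j_{i_0}\ge 2$. Indeed, if all $j_i\le 1$ then $k=\sum j_i$ counts at least two indices with $j_i=1$, each of which has $n_i\ge 2$; since the $n_i$ are pairwise distinct, at least one such index $i_1$ has $n_{i_1}\ge 3$. Taking a second index $i_2$ with $j_{i_2}=1$ and replacing $(j_{i_1},j_{i_2})=(1,1)$ by $(2,0)$ keeps $\sum j_i$ fixed but, using $t_{n_{i_1}}^{\,2}=t_{n_{i_1}}^{\,1}=2^{n_{i_1}}-1$ and $t_{n_{i_2}}^{\,0}-t_{n_{i_2}}^{\,1}=1$ (Theorem \ref{thm: 2_power} and Lemma \ref{lem: a_n^x}), strictly increases $\sum t_{n_i}^{\,j_i}$ --- contradicting maximality. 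In particular $n_{i_0}\ge 3$. Now take $\lambda\in\bB_n(T_n^k-1)$, write $T_n^k-1=(t_{n_{i_0}}^{\,j_{i_0}}-1)+\sum_{i\neq i_0}t_{n_i}^{\,j_i}$, and use Lemma \ref{lem: B-diamond} to find $\mu_{i_0}\in\bB_{2^{n_{i_0}}}(t_{n_{i_0}}^{\,j_{i_0}}-1)$ and $\mu_i\in\mathcal{H}_{2^{n_i}}^{\,j_i}$ $(i\neq i_0)$ with $\mathcal{LR}(\lambda;\mu_1,\dots,\mu_r)\neq 0$. By Theorem \ref{thm: 2_power}(2) (applicable since $j_{i_0}\ge 2$), $\chi^{\mu_{i_0}}\down_{P_{2^{n_{i_0}}}}$ has three distinct irreducible constituents $\theta_1,\theta_2,\theta_3$ of degree $2^{j_{i_0}}$; choosing a degree-$2^{j_i}$ constituent $\psi_i$ of $\chi^{\mu_i}\down_{P_{2^{n_i}}}$ for each $i\neq i_0$, the three products $\theta_a\times\prod_{i\neq i_0}\psi_i$ are distinct constituents of $\chi^\lambda\down_{P_n}$ of degree $2^k$.

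The step I expect to be the main obstacle is guaranteeing that every invocation of Lemma \ref{lem: B-diamond} is legitimate --- above all in part (2), where one needs $t_{n_{i_0}}^{\,j_{i_0}}-1>2^{n_{i_0}-1}$. Theorem \ref{thm: 2_power} only yields $t_{n_{i_0}}^{\,j_{i_0}}\ge 2^{n_{i_0}-1}+1$, so when that bound is attained one cannot borrow the $-1$ from the $i_0$-th factor and must instead re-balance the maximizing tuple (lowering $j_{i_0}$ while raising some $j_{i_1}$) or borrow from a different factor, exactly in the spirit of the three-case split in the proof of Theorem \ref{thm: 2_power}. One should also check the small cases ($k\le 2$, or $n$ a small power of $2$) directly against Example \ref{ex: n=1,2,3} and Lemma \ref{lem: a_n^x}; for $k=2$, for instance, one verifies that the maximum is attained at a tuple with $j_{i_0}=2$, where $t_{n_{i_0}}^{\,2}-1=2^{n_{i_0}}-2>2^{n_{i_0}-1}$ and no such difficulty arises.
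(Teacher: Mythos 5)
Your proposal is correct and follows essentially the same route as the paper: the same definition of $T_n^k$ as a maximum of sums $\sum_i t_{n_i}^{j_i}$, the same two inclusions via Lemmas \ref{lem: B-diamond} and \ref{lem: O_diamond_general} together with Theorem \ref{thm: 2_power}, and the same derivation of part (2) from Theorem \ref{thm: 2_power}(2). In fact you are slightly more careful than the paper on part (2): your re-balancing argument actually justifies the paper's bare assertion that a maximizing tuple can be taken with some $j_{i_0}>1$, and the applicability of Lemma \ref{lem: B-diamond} to $\bB_{2^{n_{i_0}}}(t_{n_{i_0}}^{j_{i_0}}-1)$ that you flag is indeed left unverified in the paper (it holds because $j_{i_0}\geq 2$ forces $n_{i_0}\geq 3$ and then $t_{n_{i_0}}^{j_{i_0}}\geq 2^{n_{i_0}-1}+3$).
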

\begin{proof}
	We proceed by induction on $n$: if $n=1$ then $k=0$ and the statement is obvious. 	Let $n\geq 2$ and let $n=\sum_{i=1}^r 2^{n_i}$ be its binary expansion. By Theorem \ref{thm: 2_power}, for every $i\in [1,r]$ and every $d_i \in \left[0, \alpha_{2^{n_i}}\right]$ there exists $t_{n_i}^{d_i} \in \left[2^{n_i -1}+1, 2^{n_i}\right]$ such that $\mathcal{H}_{2^{n_i}}^{d_i}=\bB_{2^{n_i}}\left(t_{n_i}^{d_i}\right)$. 
	Define
	\[M:= \max \Set{t_{n_1}^{j_1} + \cdots + t_{n_r}^{j_r} | j_i \in \left[0, \alpha_{2^{n_i}}\right] \mbox{ for every } i \in [1,r] \mbox{ and } j_1 + \cdots + j_r =k }. \]
	We want to prove that $\mathcal{H}_n^k =\bB_n(M)$.
	Let $j_1 \in \left[0, \alpha_{2^{n_1}}\right], \dots , j_r \in \left[0, \alpha_{2^{n_r}}\right]$ be such that $M=t_{n_1}^{j_1} + \cdots + t_{n_r}^{j_r} $. We have 
	\[ \bB_n(M) = \bB_{2^{n_1}}(t_{n_1}^{j_1}) \Diamond \cdots \Diamond \bB_{2^{n_r}}(t_{n_r}^{j_r}) 
	= \mathcal{H}_{2^{n_1}}^{j_1} \Diamond \cdots \Diamond \mathcal{H}_{2^{n_r}}^{j_r}
	\subseteq \mathcal{H}_n^k ,\]
	where the first equality holds by Lemma \ref{lem: B-diamond}, the second by Theorem \ref{thm: 2_power} and the inclusion by Lemma \ref{lem: O_diamond_general}.
	
	To prove part (1) of the theorem it remains to show that $\mathcal{H}_n^k \subseteq \bB_n(M)$. Let $\lambda \in \mathcal{H}_n^k$. 
	Then there exists an irreducible constituent $\phi$ of $\chi^\lambda \down_{P_{n}}$ of degree $2^k$. 
	The structure of $P_n$ discussed in section \ref{sec: wr} implies that $\phi=\phi_1 \times \cdots \times \phi_r$, for some $\phi_i \in \Irr(P_{2^{n_i}})$ for every $i\in [1,r]$ such that $\phi_i(1)=2^{j_i}$ and such that $j_1+ \cdots +j_r=k$. Hence there exists $\mu_i \in \mathcal{H}(2^{n_i})$ with $\phi_i$ as an irreducible constituent of the restriction $\chi^{\mu_i} \down_{P_{2^{n_i}}}$ for every $i\in [1,r]$, such that $\mathcal{LR}(\lambda; \mu_1, \dots , \mu_r)\neq 0$. In particular, by Theorem \ref{thm: 2_power} there exists $t_{n_i}^{j_i} \in \left[2^{n_i -1}+1, 2^{n_i}\right] $ such that  $\mu_i \in \mathcal{H}_{2^{n_i}}^{j_i} =\bB_{2^{n_i}}(t_{n_i}^{j_i})$. Therefore by Lemma \ref{lem: B-diamond},
	\[\lambda \in \bB_{2^{n_1}}\left(t_{n_1}^{j_1}\right) \Diamond \cdots \Diamond \bB_{2^{n_r}}\left(t_{n_r}^{j_r}\right)= \bB_n\left(t_{n_1}^{j_1}+ \cdots + t_{n_r}^{j_r} \right)\subseteq \bB_n\left(M \right),\]
where the last inclusion follows from the definition of $M$. 
	
In order to prove statement (2), let us fix $k>1$. From the discussion above, we know that $\mathcal{H}_{n}^k = \bB_{n}(T_n^k)$, where 
$T_n^k = t_{n_1}^{j_1}+ \cdots + t_{n_r}^{j_r} $ for suitable $j_1 \in [0, \alpha_{2^{n_1}}], \dots , j_r \in \left[0, \alpha_{2^{n_r}}\right]$ such that $j_1+\cdots +j_r=k$.
	Since $k>1$, we can suppose without loss of generality that $j_1>1$. 
	Let $\lambda \in \bB_n(T_n^k -1)$. By Lemma \ref{lem: B-diamond}, $\bB_n(T_n^k -1)=\bB_{2^{n_1}}\left(t_{n_1}^{j_1} -1 \right) \Diamond \cdots \Diamond \bB_{2^{n_r}}\left(t_{n_r}^{j_r}\right)$. Hence there exist $\mu_1 \in \bB_{2^{n_1}}\left(t_{n_1}^{j_1} -1 \right)$ and $\mu_i \in \bB_{2^{n_i}}\left(t_{n_i}^{j_i}\right)$ for every $i\in [2,r]$ such that $\mathcal{LR}(\lambda; \mu_1, \mu_2 , \dots , \mu_r)\neq 0$.
	By definition there exists an irreducible constituent $\psi_i$ of $\chi^{\mu_i} \down_{P_{2^{n_i}}}$ of degree $2^{j_i}$ for every $i\in [2,r]$. By Theorem \ref{thm: 2_power}, there exist three distinct irreducible constituents $\phi_1, \phi_2$ and $\phi_3$ of $\chi^{\mu_1} \down_{P_{2^{n_1}}}$ of degree $2^{j_1}$.
	Therefore $\phi_1 \times \psi_2 \times \cdots \times \psi_r$, $\phi_2 \times \psi_2 \times \cdots \times \psi_r$ and $\phi_3 \times \psi_2 \times \cdots \times \psi_r$ are three distinct irreducible constituents of $\chi^\lambda \down_{P_{n}}$ of degree $2^k$.
\end{proof}

We conclude the article by explicitly computing $T_n^{\alpha_n}$. By doing this, we manage to identify those characters $\chi\in\mathrm{Irr}_{\mathcal{H}}(\fS_n)$ such that $\chi\down_{P_n}$ admits irreducible constituents of every possible degree. 

\begin{definition}
For $n\in \mathbb{N}$, define $\tau_n \in [2^{n-1}+1, 2^n]$ by
\[ \begin{split}
&\tau_1=2,\ \tau_2=3, \tau_3=7,\ \tau_4=13,\ \tau_5=26, \mbox{ and}
	\\ &\tau_n= 2^{n-1}+2^{n-2}+2^{n-5}+2^{n-6} \mbox{ for } n\geq 6 . \end{split} \]
Notice that $\tau_n=2\tau_{n-1}$ for any $n\geq 7$.
\end{definition}

As usual, we start by studying the case where $n$ is a power of $2$. 

\begin{proposition} \label{prop: bound_2power}
	For every $n\in \mathbb{N}$, $T_{2^n}^{\alpha_{2^n}}=\tau_n$.
	Moreover, if $n\geq 6$ and $\lambda \in \mathcal{H}_{2^n}^{\alpha_{2^n}}$ then there exist at least three distinct irreducible constituents of $\chi^\lambda \down_{P_{2^n}}$ of degree $2^{\alpha_{2^n}}$.
\end{proposition}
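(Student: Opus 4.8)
The plan is to prove both assertions simultaneously by induction on $n$, using the recursive description of $t_n^k := T_{2^n}^k$ and of the auxiliary integers $\delta_{n-1}^h \in \{0,-1\}$ set up inside the proof of Theorem \ref{thm: 2_power} (recall $\delta_{n-1}^h$ records whether $\chi^{\lambda_{n-1}^h}\down_{P_{2^{n-1}}}$ has two, resp.\ one, irreducible constituents of degree $2^h$, where $\lambda_{n-1}^h = (t_{n-1}^h, 1^{2^{n-1}-t_{n-1}^h})$). The key observation is that for the top degree $k=\alpha_{2^n}$ this recursion degenerates. Indeed, since $\alpha_{2^n}=2\alpha_{2^{n-1}}+1$ for $n\geq 4$, we have $k-1=2\alpha_{2^{n-1}}$, and the only pair $i,j\in[0,\alpha_{2^{n-1}}]$ with $i+j=k-1$ is $i=j=\alpha_{2^{n-1}}$; hence there is no admissible pair with $i\neq j$, and the maximum defining $t_n^{\alpha_{2^n}}$ collapses to the single term
\[
t_n^{\alpha_{2^n}} \;=\; 2\,t_{n-1}^{\alpha_{2^{n-1}}}+\delta_{n-1}^{\alpha_{2^{n-1}}} \qquad (n\geq 4).
\]

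The base cases $n\leq 6$ are handled directly. For $n\leq 3$ the values $T_2^0=2$, $T_4^1=3$, $T_8^2=7$ are read off from Example \ref{ex: n=1,2,3} and agree with $\tau_1,\tau_2,\tau_3$. For $n\in\{4,5,6\}$ the displayed recursion reduces the computation of $t_n^{\alpha_{2^n}}$ to the determination of $\delta_3^2$, $\delta_4^5$ and $\delta_5^{11}$ respectively: here $\delta_3^2=-1$ is immediate from Lemma \ref{lem: hook-1} (since $\chi^{(7,1)}\down_{P_8}$ has a unique constituent of each of the degrees $1,2,4$), while $\delta_4^5=0$ and $\delta_5^{11}=-1$ are verified by finite Littlewood--Richardson computations. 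This yields $t_4^5=2\cdot 7-1=13=\tau_4$, then $t_5^{11}=2\cdot 13+0=26=\tau_5$, and then $t_6^{23}=2\cdot 26-1=51=\tau_6$. The three-constituents claim for $n=6$ follows from Theorem \ref{thm: 2_power}(2) for all $\lambda\in\bB_{64}(51)$ with $\lambda_1\leq 50$ (note $\alpha_{2^6}>1$), and is verified by hand for the single remaining hook $\lambda=(51,1^{13})$.

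For the inductive step fix $n\geq 7$ and assume the statement for $n-1$. Since $n-1\geq 6$, the inductive hypothesis guarantees that $\chi^{\lambda_{n-1}^{\alpha_{2^{n-1}}}}\down_{P_{2^{n-1}}}$ has at least three, hence at least two, irreducible constituents of degree $2^{\alpha_{2^{n-1}}}$, so $\delta_{n-1}^{\alpha_{2^{n-1}}}=0$. Together with $t_{n-1}^{\alpha_{2^{n-1}}}=\tau_{n-1}$ and the recursion $\tau_n=2\tau_{n-1}$ (valid for $n\geq 7$), the displayed formula gives $T_{2^n}^{\alpha_{2^n}}=2\tau_{n-1}=\tau_n$. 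For the three-constituents claim, let $\lambda\in\mathcal{H}_{2^n}^{\alpha_{2^n}}=\bB_{2^n}(\tau_n)$. If $\lambda_1\leq\tau_n-1$ we are done by Theorem \ref{thm: 2_power}(2). Otherwise $\lambda=(\tau_n,1^{2^n-\tau_n})$; setting $\mu=\lambda_{n-1}^{\alpha_{2^{n-1}}}=(\tau_{n-1},1^{2^{n-1}-\tau_{n-1}})$ and using $\tau_n=2\tau_{n-1}$, Lemma \ref{lem: diff} gives $\mathcal{LR}(\lambda;\mu,\mu)=\binom{1}{0}=1\neq 0$, so $\chi^\mu\times\chi^\mu$ occurs in $\chi^\lambda\down_{\fS_{2^{n-1}}\times\fS_{2^{n-1}}}$. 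Let $\psi_1,\psi_2,\psi_3$ be three distinct constituents of $\chi^\mu\down_{P_{2^{n-1}}}$ of degree $2^{\alpha_{2^{n-1}}}$, provided by induction. As the $\psi_i$ are pairwise distinct, the characters $(\psi_i\times\psi_j)\up^{P_{2^n}}$ for $1\leq i<j\leq 3$ are irreducible of degree $2^{2\alpha_{2^{n-1}}+1}=2^{\alpha_{2^n}}$, are pairwise distinct (they are distinguished by their restrictions to the base group $P_{2^{n-1}}\times P_{2^{n-1}}$), and each is a constituent of $\chi^\lambda\down_{P_{2^n}}$ by Frobenius reciprocity; the conjugate hook $(\lambda)'$ has first part $<\tau_n$, so it is covered by the previous case (or by closure under conjugation).

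The step I expect to require the most care is the base-case analysis for $n\in\{4,5,6\}$. In contrast with the stable range $n\geq 7$, here $\tau_n\neq 2\tau_{n-1}$ and one cannot bootstrap the three-constituents property from smaller $n$ — for instance the extreme hook at $n=5$ carries only a single top-degree constituent ($\delta_5^{11}=-1$), which is exactly what forces $\tau_6=2\tau_5-1$. Consequently the signs $\delta_4^5$, $\delta_5^{11}$ and the $n=6$ refinement have to be pinned down by explicit (finite) computation, after which the remaining argument is the uniform recursive one described above.
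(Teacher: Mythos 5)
Your proof is correct and follows essentially the same route as the paper: induction on $n$ with the cases $n\le 6$ settled by direct computation, and for $n\ge 7$ the identity $\tau_n=2\tau_{n-1}$ combined with the inductive availability of three top-degree constituents at level $n-1$ yielding both $T_{2^n}^{\alpha_{2^n}}=\tau_n$ and the three-constituents claim. The only (harmless) organizational differences are that you obtain the upper bound via the recursion $M=2t_{n-1}^{w}+\delta_{n-1}^{w}$ from the proof of Theorem \ref{thm: 2_power} rather than re-running the double inclusion, and you reduce the three-constituents claim to the single extreme hook plus conjugation instead of the paper's general $\mu,\nu$ overlap analysis.
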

\begin{proof}
We proceed by induction on $n$. If $n\leq 6$ then the statement holds by direct computation. 
	The cases $n=1,2,3$ can be seen in Example \ref{ex: n=1,2,3}, while the cases $n=4,5,6$ can be computed using Theorem \ref{thm: 2_power}. Notice that for $n\in [1,5]$, $\lambda =\left(t_n^{\alpha_{2^n}} , 1^{n-t_n^{\alpha_{2^n}}}\right) \in \mathcal{H}_{2^n}^{\alpha_{2^n}}$ does not have in $\chi^\lambda \down_{P_{2^n}}$ three distinct irreducible constituents of degree $2^{\alpha_{2^n}}$. Instead, it is true for $n=6$.

Let $n\geq 7$. We want to show that $\mathcal{H}_{2^n}^{\alpha_{2^n}}=\bB_{2^n}(\tau_{n})$.
	Given $\lambda \in \mathcal{H}_{2^n}^{\alpha_{2^n}}$, there exists an irreducible constituent $\phi $ of $\chi^\lambda \down_{P_{n}}$ of degree $2^{\alpha_{2^n}}$. As we have seen in the proof of Proposition \ref{prop: cd_n}, $\phi= (\phi_1 \times \phi_2)\up^{P_{2^n}}$ with $\phi_1,\phi_2 \in \Irr(P_{2^{n-1}})$, $\phi_1 \neq \phi_2$ and $\phi_1(1)=\phi_2(1)=2^{\alpha_{2^{n-1}}}$. Hence, there exists $\mu_1, \mu_2\in\mathcal{H}(2^{n-1})$ such that $\mathcal{LR}(\lambda; \mu_1, \mu_2)\neq 0$ and such that $\left[\chi^{\mu_i}\down_{P_{2^{n-1}}},\phi_i\right]\neq 0$ for all $i\in\{1,2\}$. In particular, $\mu_1,\mu_2\in \mathcal{H}_{2^{n-1}}^{\alpha_{2^{n-1}}}$ and by inductive hypothesis we know that $\mathcal{H}_{2^{n-1}}^{\alpha_{2^{n-1}}}=\bB_{2^{n-1}}(\tau_{n-1})$. Using these observations together with Lemma \ref{lem: B-diamond} we conclude that $$\lambda \in \bB_{2^{n-1}}(\tau_{n-1}) \Diamond \bB_{2^{n-1}}(\tau_{n-1})=\bB_{2^{n}}(2\tau_{n-1}) =\bB_{2^{n}}(\tau_{n}).$$ 
	
	In order to prove the other inclusion, we consider $\lambda \in \bB_{2^{n}}(\tau_{n})$. From  Lemma \ref{lem: B-diamond} and the inductive hypothesis, we know that  $$\bB_{2^{n}}(\tau_{n})= \bB_{2^{n-1}}(\tau_{n-1}) \Diamond \bB_{2^{n-1}}(\tau_{n-1}) = \mathcal{H}_{2^{n-1}}^{\alpha_{2^{n-1}}} \Diamond \mathcal{H}_{2^{n-1}}^{\alpha_{2^{n-1}}} .$$ Therefore there exist $\mu, \nu \in \mathcal{H}_{2^{n-1}}^{\alpha_{2^{n-1}}}$ such that $\chi^\mu \times \chi^\nu \mid (\chi^\lambda)\down_{\fS_{2^{n-1}} \times \fS_{2^{n-1}}}$. The inductive hypothesis implies that both $\chi^\mu \down_{P_{2^{n-1}}}$ and $\chi^\nu \down_{P_{2^{n-1}}}$ admit three distinct irreducible constituents of degree $2^{\alpha_{2^{n-1}}}$. Denote by $\phi_1, \phi_2, \phi_3$ those constituents of $\chi^\mu \down_{P_{2^{n-1}}}$ and by $\psi_1, \psi_2, \psi_3$  those constituents of $\chi^\nu \down_{P_{2^{n-1}}}$. If $\psi_j\notin\{\phi_1,\phi_2,\phi_3\}$ for some $j\in[1,3]$, then $(\phi_1 \times \psi_j) \up^{P_{2^n}}, (\phi_2 \times \psi_j) \up^{P_{2^n}}$ and $(\phi_3 \times \psi_j) \up^{P_{2^n}}$ are three distinct irreducible constituents of $\chi^\lambda \down_{P_{2^n}}$ of degree $2^{\alpha_{2^n}}$. On the other hand, if $\psi_j\in\{\phi_1,\phi_2,\phi_3\}$ for all $j\in[1,3]$, then we can assume without loss of generality that $\phi_i=\psi_i$ for all $i\in [1,3]$. In this case we have that $(\phi_1 \times \psi_2) \up^{P_{2^n}}, (\phi_2 \times \psi_3) \up^{P_{2^n}}$ and $(\phi_3 \times \psi_1) \up^{P_{2^n}}$ are three distinct irreducible constituents of $\chi^\lambda \down_{P_{2^n}}$ of degree $2^{\alpha_{2^n}}$. Moreover, $\lambda \in \mathcal{H}_{2^n}^{\alpha_{2^n}}$ as desired. 
\end{proof}

\begin{proposition}\label{prop: bound_n}
	If $n\in \mathbb{N}$ and $n=\sum_{i=1}^r 2^{n_i}$ is its binary expansion, then $T_n^{\alpha_n}=\sum_{i=1}^r \tau_{n_i}$.
\end{proposition}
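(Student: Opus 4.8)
The plan is to reduce Proposition~\ref{prop: bound_n} to the already established power-of-$2$ case, Proposition~\ref{prop: bound_2power}, via the combinatorial machinery built in Theorem~\ref{thm: generic_case} and Lemma~\ref{lem: B-diamond}. Recall from the proof of Theorem~\ref{thm: generic_case} that, writing $n=\sum_{i=1}^r 2^{n_i}$, one has
\[
\mathcal{H}_n^k=\bB_n(T_n^k),\qquad T_n^k=\max\Set{t_{n_1}^{j_1}+\cdots+t_{n_r}^{j_r}\ \middle|\ j_i\in[0,\alpha_{2^{n_i}}],\ j_1+\cdots+j_r=k}.
\]
So the task is purely to evaluate this maximum when $k=\alpha_n=\sum_{i=1}^r\alpha_{2^{n_i}}$. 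First I would observe that in this case there is only \emph{one} admissible tuple: since each $j_i$ is bounded above by $\alpha_{2^{n_i}}$ and their sum must equal $\sum_i\alpha_{2^{n_i}}$, we are forced to take $j_i=\alpha_{2^{n_i}}$ for every $i$. Hence the maximum is trivially attained and
\[
T_n^{\alpha_n}=\sum_{i=1}^r t_{n_i}^{\alpha_{2^{n_i}}}=\sum_{i=1}^r T_{2^{n_i}}^{\alpha_{2^{n_i}}}.
\]
Then I would invoke Proposition~\ref{prop: bound_2power}, which gives $T_{2^{n_i}}^{\alpha_{2^{n_i}}}=\tau_{n_i}$ for each $i$, yielding $T_n^{\alpha_n}=\sum_{i=1}^r\tau_{n_i}$, as desired.

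There is essentially no obstacle here: the only point requiring a line of care is the uniqueness of the extremal tuple, and that follows immediately from the two constraints $j_i\le\alpha_{2^{n_i}}$ and $\sum_i j_i=\sum_i\alpha_{2^{n_i}}$ forcing equality in each coordinate. I would also remark, for completeness, that the containment $\alpha_n\in[0,\alpha_n]$ makes the hypothesis $k\in[0,\alpha_n]$ of Theorem~\ref{thm: generic_case} applicable, and that the identity $\mathcal{H}_n^{\alpha_n}=\bB_n\big(\sum_i\tau_{n_i}\big)$ records precisely which hook characters restrict to $P_n$ with a constituent of the maximal possible degree $2^{\alpha_n}$; combined with Theorem~\ref{thm: inclusion}, such a $\chi^\lambda$ then has irreducible constituents of \emph{every} degree in $\cd(P_n)$ upon restriction to $P_n$.
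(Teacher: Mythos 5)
Your proposal is correct and follows essentially the same route as the paper: quoting the formula $T_n^k=\max\{t_{n_1}^{j_1}+\cdots+t_{n_r}^{j_r}\}$ from the proof of Theorem~\ref{thm: generic_case}, noting that the constraints $j_i\le\alpha_{2^{n_i}}$ and $\sum_i j_i=\alpha_n=\sum_i\alpha_{2^{n_i}}$ force $j_i=\alpha_{2^{n_i}}$ for every $i$, and then invoking Proposition~\ref{prop: bound_2power}. The paper's proof is identical in substance, merely stating the forced equality less explicitly.
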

\begin{proof}
	Arguing exactly as in the proof of Theorem \ref{thm: generic_case}, we have that
	\[T_n^{\alpha_n} = \max \lbrace T_{n_1}^{j_1}+\cdots +T_{n_r}^{j_r} \mid j_i\in [0,\alpha_{2^{n_i}}] \mbox{ for } i\in [1,r], \mbox{ and } \sum_{i=1}^r j_i = \alpha_n\} .\] 
Since $\alpha_n=\sum_{i=1}^r\alpha_{2^{n_i}}$, we deduce that $T_n^{\alpha_n}=T_{n_1}^{\alpha_{2^{n_1}}} +\cdots +T_{n_r}^{\alpha_{2^{n_r}}}$. 
The statement now follows from Proposition \ref*{prop: bound_2power}.
\end{proof}

\begin{observation}
	By Theorems \ref{thm: inclusion} and \ref{thm: generic_case} we know that for every $k\in [0, \alpha_n]$ we have
	$\bB_n(T_n^{\alpha_n})=\mathcal{H}_n^{\alpha_n} \subseteq \mathcal{H}_n^k$. 
From Proposition \ref{prop: bound_n} we observe that the majority of the elements of $\mathcal{H}(n)$ are contained in $\bB_n(T_n^{\alpha_n})$. This shows that the restriction to $P_n$ of most of the irreducible characters labelled by hook partitions admit irreducible constituents of every possible degree. 
	%
\end{observation}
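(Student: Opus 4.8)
The first assertion requires no new work. Theorem \ref{thm: generic_case} identifies $\mathcal{H}_n^{\alpha_n}=\bB_n(T_n^{\alpha_n})$, while Theorem \ref{thm: inclusion}, applied with the larger index $\alpha_n$, yields $\mathcal{H}_n^{\alpha_n}\subseteq\mathcal{H}_n^k$ for every $k\in[0,\alpha_n]$. Combined with Proposition \ref{prop: cd_n}, which gives $\cd(P_n)=\{2^k\mid k\in[0,\alpha_n]\}$, this shows that for each $\lambda\in\bB_n(T_n^{\alpha_n})$ the restriction $\chi^\lambda\down_{P_n}$ has an irreducible constituent of every degree in $\cd(P_n)$. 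The only genuine content of the remark is therefore the counting claim, and I would prove it by comparing the two cardinalities directly.

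The hook partitions of $n$ are exactly $(n-x,1^x)$ for $x\in[0,n-1]$, so $|\mathcal{H}(n)|=n$. Since $(n-x,1^x)$ has first part $n-x$ and length $x+1$, it lies in $\bB_n(T)=\mathcal{B}_n(T)\cap\mathcal{H}(n)$ precisely when $n-T\le x\le T-1$. Each summand $\tau_{n_i}$ of $T_n^{\alpha_n}$ satisfies $2^{n_i-1}<\tau_{n_i}\le 2^{n_i}$ by the definition of $\tau_{n_i}$ together with Proposition \ref{prop: bound_2power}, so $\tfrac{n}{2}<T_n^{\alpha_n}\le n$; hence the interval of admissible $x$ is nonempty and contained in $[0,n-1]$, giving $|\bB_n(T_n^{\alpha_n})|=2T_n^{\alpha_n}-n$.

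The heart of the argument is the elementary estimate $T_n^{\alpha_n}\ge\tfrac34 n$. By Proposition \ref{prop: bound_n} we have $T_n^{\alpha_n}=\sum_{i=1}^r\tau_{n_i}$, so it suffices to establish the uniform bound $\tau_m\ge\tfrac34\,2^m$ for every exponent $m$ appearing in a binary expansion. For $m\ge6$ the closed form gives $\tau_m=\tfrac{51}{64}\,2^m>\tfrac34\,2^m$, while the finitely many small cases are checked by hand (a summand $2^0=1$, if present, contributes $1\ge\tfrac34$), the minimal ratio $\tfrac34$ occurring only at $m=2$. Summing yields $T_n^{\alpha_n}\ge\tfrac34 n$, so $|\bB_n(T_n^{\alpha_n})|=2T_n^{\alpha_n}-n\ge\tfrac{n}{2}=\tfrac12|\mathcal{H}(n)|$. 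Since distinct binary exponents force at most one summand to equal $2$, this bound is strict for every $n\ne4$, and in fact the proportion equals $\tfrac{19}{32}$ when $n=2^m$ with $m\ge6$, confirming that a genuine majority of $\mathcal{H}(n)$ lies in $\bB_n(T_n^{\alpha_n})$.

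I do not anticipate a real obstacle: the whole argument is a cardinality count resting on Propositions \ref{prop: bound_2power} and \ref{prop: bound_n}. The only delicate point is reading the word ``majority'' quantitatively---my plan interprets it as ``at least one half always, and strictly more than one half for all $n\ne4$''---and recording that the single exception $n=4$ is the degenerate equality $|\bB_4(3)|=2=\tfrac12|\mathcal{H}(4)|$ already visible in Example \ref{ex: n=1,2,3}.
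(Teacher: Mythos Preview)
The paper offers no separate proof for this remark; it is stated as a concluding observation whose content is meant to be read off directly from the cited results. Your proposal is correct and in fact supplies considerably more than the paper does: you turn the informal word ``majority'' into the precise inequality $|\bB_n(T_n^{\alpha_n})|=2T_n^{\alpha_n}-n\ge n/2$, obtained from the uniform bound $\tau_m\ge\tfrac34\cdot 2^m$ together with Proposition~\ref{prop: bound_n}. This quantitative argument does not appear in the paper, but it is the natural way to justify the claim and your computations are all correct (including the asymptotic ratio $19/32$ for large $2$-powers and the identification of $n=4$ as the unique case of equality).

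One small point of hygiene: the paper's definition of $\tau_m$ begins at $m=1$, so $\tau_0$ is not literally defined there. Your handling of a possible summand $2^0$ via the value $1$ is exactly what is forced by $T_1^{0}=1$ (equivalently by Proposition~\ref{prop: bound_2power} read at $n=0$), but it would be worth saying this explicitly rather than leaving it parenthetical.
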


\end{document}